\documentclass[11pt]{amsart}
\usepackage{amssymb}
\usepackage{amsmath}
\usepackage{amsthm}
\usepackage{graphicx}
\usepackage{macros}
\usepackage{bm}
\usepackage[usenames,dvipsnames]{xcolor}
\usepackage{subcaption}
\usepackage{tikz}
\usepackage{pgfplots} 
\usetikzlibrary{3d,calc}
\usepackage{mwe}

\usetikzlibrary{arrows.meta,
                positioning,
                quotes}

\usepackage[T1]{fontenc}
\usepackage[english]{babel}
\pgfplotsset{compat=newest}

\usepackage{float}
\usepackage{color}
\usepackage{hyperref}
\hypersetup{
	colorlinks,
	citecolor= Brown,
	linkcolor= link,
	urlcolor= link
}

\usepackage{appendix}
\usepackage{array}
\usepackage{footnote}
\usepackage{booktabs}
\usepackage{hhline}
\makesavenoteenv{tabular}

\usepackage{algpseudocode}
\usepackage{algorithm}

\usepackage{caption}

\usepackage{bbm}

\usepackage{pgfplots} 
\usepackage{textcomp}

\usepackage{scrextend}
\deffootnote{0em}{1.6em}{\enskip}
\usepackage{neuralnetwork}
\graphicspath{{./figures/}}

\usetikzlibrary{positioning}	
\tikzset{%
	every neuron/.style={
		circle,
		draw,
		minimum size=1cm
	},
	neuron missing/.style={
		draw=none, 
		scale=4,
		text height=0.333cm,
		execute at begin node=\color{black}$\vdots$
	},
}

\usepackage[top=2cm,bottom=2cm,left=2.5cm,right=2.5cm]{geometry}
\definecolor{link}{rgb}{0.18,0.25,0.63}
\definecolor{myred}{rgb}{0.7,0.25,0.2}
\definecolor{mygray}{rgb}{0.8,0.8,0.8}

\pagestyle{plain}
\numberwithin{equation}{section}

\newcommand{\cmmnt}[1]{}

\makeatletter
\g@addto@macro{\endabstract}{\@setabstract}
\newcommand{\authorfootnotes}{\renewcommand\thefootnote{\@fnsymbol\c@footnote}}%
\makeatother

\definecolor{myred}{rgb}{0.78,0.20,0.00}

\newtheorem{definition}{Definition}[section]
\newtheorem{theorem}[definition]{Theorem}
\newtheorem{proposition}[definition]{Proposition}
\newtheorem{lemma}[definition]{Lemma}
\newtheorem{remark}[definition]{Remark}
\newtheorem{assumption}[definition]{Assumption}

\newcommand{\R}{ \mathbb{R}}

\begin{document}
%

 \begin{center}
 \Large
   \textbf{\textsc{First-order conditions for the optimal control of learning-informed nonsmooth PDEs}} \par \bigskip \bigskip
   \normalsize
    \textsc{Guozhi  Dong}\textsuperscript{$\,1$},
  \textsc{Michael Hinterm\"uller}\textsuperscript{$\,2,$$3$}, \textsc{Kostas Papafitsoros}\textsuperscript{$\,4$},  \textsc{Kathrin V\"olkner}\textsuperscript{$\,5,$$6$}
\let\thefootnote\relax\footnote{
\textsuperscript{$1$}School of Mathematics and Statistics, HNP-LAMA, Central South University, Lushan South Road 932, 410083 Changsha, China
}

\let\thefootnote\relax\footnote{
\textsuperscript{$2$}Institute for Mathematics, Humboldt-Universit\"at zu Berlin, Unter den Linden 6, 10099 Berlin, Germany}

\let\thefootnote\relax\footnote{
\textsuperscript{$3$}Weierstrass Institute for Applied Analysis and Stochastics (WIAS), Mohrenstrasse 39, 10117 Berlin, Germany}

\let\thefootnote\relax\footnote{
\textsuperscript{$4$}School of Mathematical Sciences, Queen Mary University of London, Mile End Road, E1 4NS, UK}
\let\thefootnote\relax\footnote{
\textsuperscript{$5$}Department of Mathematics, Freie Universit\"at Berlin, Arnimallee 6, 14195 Berlin, Germany}
\let\thefootnote\relax\footnote{
\textsuperscript{$6$}Zuse Institute Berlin, Takustrasse 7, 14195 Berlin, Germany}

\let\thefootnote\relax\footnote{
\vspace{-12pt}
\begin{tabbing}
\hspace{3.2pt}Emails: \= \href{mailto:guozhi.dong@csu.edu.cn}{\nolinkurl{guozhi.dong@csu.edu.cn}},
\href{mailto:Hintermueller@wias-berlin.de}{\nolinkurl{hintermueller@wias-berlin.de}},
 \href{mailto: k.papafitsoros@qmul.ac.uk}{\nolinkurl{k.papafitsoros@qmul.ac.uk}},\\
 \> \href{mailto: kathrin.voelkner@fu-berlin.de}{\nolinkurl{kathrin.voelkner@fu-berlin.de}}
  \end{tabbing}
}
\end{center}
\vspace{-0.8cm}

	\begin{abstract}
 In this paper we study the optimal control of a class of semilinear elliptic partial differential equations which have nonlinear constituents that are only accessible by data and are approximated by nonsmooth ReLU neural networks. The optimal control problem  is studied in detail. In particular, the existence and uniqueness of the state equation are shown, and continuity as well as directional differentiability properties of the corresponding control-to-state map are established. Based on approximation capabilities of the pertinent networks, we address fundamental questions regarding approximating properties of the learning-informed control-to-state map and the  solution of  the corresponding optimal control problem.  Finally, several stationarity conditions are derived based on different notions of generalized differentiability.
		\vskip .3cm
		\noindent	
		{\bf Keywords.} {Nonsmooth partial differential equations, data-driven models, neural networks, ReLU activation function,  optimal control, PDE constrained optimization}
	\end{abstract}

\section{Introduction}
\setcounter{footnote}{0} 
\subsection{Context and motivation}


In this paper we study the following prototypical optimal control problem:
\begin{equation}\label{P_intro}\tag{$P_{f}$}
\begin{aligned}
&\text{minimize }\quad J(y,u):= \frac{1}{2} \|y-g\|_{L^{2}(\Omega)}^{2} + \frac{\alpha}{2} \|u\|_{L^{2}(\om)}^{2},\quad \text{ over } (y,u)\in H_{0}^{1}(\om) \times L^{2}(\om),\\
&\text{subject to }\left \{
\begin{aligned}
-\Delta y + f(\cdot, y)&=u, \;\; \text{ in }\Omega,\\
y&=0, \;\; \text{ on }\partial \Omega,
\end{aligned}
\right. \quad \text{ and }\quad  u\in \mathcal{C}_{ad}.
\end{aligned}
\end{equation}
Here $\Omega$ denotes an open, bounded Lipschitz domain in $\RR^{d}$, $d\ge 2$, with boundary $\partial\Omega$, $g\in L^{2}(\om)$ is a given desired state, and $\alpha>0$ is fixed. Further, $\mathcal{C}_{ad}$ is the admissible set of controls $u$, which is a nonempty, bounded, closed and convex subset of the Lebesgue space $L^{p}(\om)$ for some $p\ge 2$. The state is given by $y$ and is supposed to lie in the Sobolev space $H_0^1(\Omega)$; see, e.g., \cite{Adams} for the latter. Moreover, $f:\Omega\times \R \to \R$ is assumed to be a rather general nonlinear and possibly nonsmooth function. 
A particular motivating example for this work is given by $f$ represented (or approximated) by a so-called ReLU artificial neural network. In this case $f$ is replaced by $\mathcal{N}$, a neural network function that has the Rectified Linear Unit (ReLU)  $\sigma(t):=\max(t,0)$ as the underlying activation function; see Section \ref{sec:deep_learning} for more details and definitions. When such an $\mathcal{N}$ approximates $f$, the semilinear partial differential equation PDE in \eqref{P_intro} is replaced by the following (nonsmooth) \emph{learning-informed} PDE
\begin{equation}\label{intro:state_N}
\left \{
\begin{aligned}
-\Delta y + \mathcal{N}(\cdot, y)&=u, \;\; \text{ in }\Omega,\\
y&=0, \;\; \text{ on }\partial \Omega.
\end{aligned}
\right. 
\end{equation}
The concept of learning-informed PDEs was recently introduced in \cite{DonHinPap20}. It aims at representing an (explicitly) unknown physical law  $f$ by a neural network (map) $\mathcal{N}$ which is learned from given data.  For instance, assume that a data set
\[D:=\{(y_{i}, u_{i}):\; \text{$y_{i}$ (approximately) solves the original unknown PDE for $u_{i}$},\;i=1\ldots, n_{D}\},\]
of pre-specified controls and associated state responses is at our disposal. Such a set may, for example, arise from measurements or computations. Then this data set can be used towards evaluation instances of $f$ via $f(x_{j}, y_{i}(x_{j}))\simeq u_{i}(x_{j})+ \Delta y_{i} (x_{j})$, where $\{x_{j}\}_{j=1}^{\ell}$, $\ell\in\mathbb{N}$, is a finite collection of evaluation locations in the domain $\Omega$. Using these instances as a training set, a neural network $\mathcal{N}$ can be computed in the context of supervised learning and take the role of an approximation of the unknown $f$. Via solving the associated PDE \eqref{intro:state_N}, this results in a learning-informed control-to-state map $S_{\mathcal{N}}: u\mapsto y$. In view of this, such a set-up can also be regarded as an example in the currently rather limited field of operator learning, which is the task of learning a map between infinite dimensional spaces; here from the control to the state space. In particular, it provides a case that depicts how low dimensional learning could help to identify infinite dimensional control-to-state operators. We also note that in this set-up, the training of the network $\mathcal{N}$ is assumed to be done in an \emph{offline} phase, meaning that the minimization problem that constitutes the training of $\mathcal{N}$ and the optimal control problem \eqref{P_intro} are decoupled. 
 In \cite{DonHinPap20}, a smooth version of the above framework was considered in order to learn the physical law that governs the separation of a (initially homogeneous) material mixture into two pure states. This law can be related to (a derivative of) a double-well potential type function $f$ in a stationary Allen-Cahn equation.  
 In the same work, an ordinary differential equation (ODE) version of \eqref{P_intro} was  used to approximate the physics behind magnetic resonance imaging (MRI) \cite{DonHinPap19} with the goal of reconstructing quantitative maps of biophysical parameters in the context of quantitative MRI.
 
 The regularity of the neural network function $\mathcal{N}$ is determined by the regularity of the underlying activation function. In this vein, we note that the set of functions represented by ReLU neural networks coincides with the family of piecewise affine maps; see Section \ref{sec:deep_learning}. Since the main objective in \cite{DonHinPap20} was to introduce the concept of learning-informed PDEs, only smooth neural networks were considered, corresponding to some smooth, typically sigmoidal activation function, such as, e.g.,\ tansig or arctan. This eases the subsequent optimization-theoretic treatment of the optimal control problem like the derivation of first-order optimality conditions. However, when it comes to deep learning applications nowadays, the nonsmooth ReLU is the most popular choice among activation functions. Its main advantages lie in its sparsification  properties, its easy computation and further advantages during training, such as for instance with regards to remedy the issue of vanishing gradients \cite{bengio2013representation, glorot2011deep}. The study of approximation capabilities of ReLU neural networks is another active field of research \cite{relu_Wsp, petersen2018optimal}. All these aspects are also relevant to our problem, in particular with respect to the capability of the learning-informed optimal control problem to approximate the original one. 
 We mention already here that we consider the nonsmooth $\mathcal{N}$ to be monotonically increasing in the variable $y$. This guarantees uniqueness of solutions to the learning-informed state equation, resulting in a well-defined control-to-state map. Conceptually, this assumes that the  ground truth map $f$ is also monotonically increasing and the network has been trained sufficiently well, thus preserving this monotonicity. However, we point out that for homogeneous Dirichlet boundary conditions (like the one in \eqref{intro:state_N}) one can still show uniqueness of the solution of the state equation if the negative part of $\partial_{y} \mathcal{N}$ is sufficiently small (which is often the case for good enough approximations of monotone functions).  In this case it can still be shown that the corresponding PDE operator is strongly monotone and the Browder-Minty theorem can be applied.  Let us also point out that there are ways to enforce monotonicity during training  \cite{Daniels_monotone, nips_monotonic, sivaraman2020counterexample} via a sufficiently good derivative approximation (Sobolev training)  \cite{sobolev_training}.
 

Since $f$ and its approximation $\mathcal{N}$ are nonsmooth maps, one does not expect the associated control-to-state map $S$ to be Frech\'et differentiable. This  poses difficulties in the derivation of first-order optimality respectively stationarity conditions for the optimal control problem as well as its numerical treatment. Therefore, one needs to resort to more general notions of differentiability (than Fr\'echet differentiability); we refer to the introduction of \cite{christof} for a short review. In the same paper, the optimal control of a semilinear PDE with the special choice $f(\cdot, y)=\max(0,y)$ and in the absence of control constraints was studied, characterizing the Bouligand subdifferential of the control-to-state map and deriving so-called strong stationarity conditions. We remark that in our set-up this would correspond to a very simple and shallow ReLU network $\mathcal{N}$. Here, following \cite{christof} and also adapting certain results  from a recent preprint \cite{Betz},
 we study in depth a  version of \eqref{P_intro} with a more general $f$ that still covers the ReLU neural network case. This function $f$ is assumed to be measurable in the first variable and locally Lipschitz and directionally differentiable in the second one.  For this general problem, we establish a series of stationarity respectively necessary first-order conditions. We begin by deriving purely primal optimality conditions ($B$-stationarity) which are equivalent to  the directional derivative of the reduced objective in feasible directions being non-negative, as well as conditions that are derived as a limit of a regularization scheme  \cite{barbu1984optimal, christof, mignot_puel} (weak stationarity). The latter conditions are refined in the case that $f$ is a piecewise differentiable function yielding $C$-stationarity.  Under additional constraint qualifications, we further improve $C$-stationarity  by establishing certain sign conditions on the dual variables. This leads to strong stationarity which is shown to be equivalent to $B$-stationarity in specific cases.

\subsection{Structure of the paper} 
In Section \ref{sec:deep_learning} we start by reviewing basic properties of ReLU neural networks.
In Section \ref{sec:ReLU_PDE}, we study the state equation of the optimal control problem \eqref{P_intro} with a rather general nonsmooth, nonlinear function $f$. Existence and uniqueness of solutions to this PDE are discussed, and continuity as well as directional differentiability properties of the corresponding control-to-state map are shown. We then study approximation results of this PDE on two levels: (i) $f$ is approximated by a ReLU neural network $\mathcal{N}$; (ii) $f$, or a network $\mathcal{N}$, is approximated by  some smoothed version of it.
 The corresponding general optimal control problem is studied in Section \ref{sec:wellposedness}. Existence and  approximation results stemming from the analogous results of the state equation are shown. 
 In Section \ref{sec:stationary}, we focus on different stationarity systems that are necessary for local minimizers of the optimal control problem, namely $B$-, weak, $C$- and strong stationarity. \\

We finally mention that in the companion paper \cite{DonHinPap22b}, by employing a descent algorithm inspired by the bundle-free method in \cite{Hint_Suro}, a numerical algorithm that treats the nonsmooth optimal control problem \eqref{P_intro} directly is proposed, analyzed and tested.




\section{ReLU neural networks}\label{sec:deep_learning}


In this work, we rely on ReLU neural networks as efficient approximators of an unknown and potentially complex function $f: \RR^{n_{0}}\to \RR^{n_{L}}$, with $n_0, n_L\in\mathbb{N}$, for which only a few (approximate) data values are available, namely $\{f_{j}\approx f(x_{j})\}_{j=1}^{n_{D}}$ where $n_D\in\mathbb{N}$ and $\{x_j\}_{j=1}^{n_D}\subset\Omega$. Given this setting, we assume that a network function $\mathcal{N}_{\theta}$ with parameters $\theta$ -- see Section \ref{sec:2.1} below for definitions -- has been trained to ``learn" the function $f$ in the context of supervised learning. This training is typically achieved by (approximately) solving 
\begin{equation}\label{supervised_learning}
\min_{\theta\in \RR^{p}}  \sum_{j=1}^{n_{D}} \ell(\mathcal{N}_{\theta}(x_{j}), f_{j}) + \mathcal{R}(\theta).
\end{equation}
Here, $\ell$ is a suitable loss function and $\mathcal{R}$ is an optional regularization term inducing some a priori properties on $\theta$. Clearly, the study of \eqref{supervised_learning} is an important research area in the field of deep learning \cite{goodfellow2016deep}, but here we rather assume that this learning process has been successfully completed such that we are equipped with a ReLU network $\mathcal{N}$ which approximates $f$ sufficiently well. As a consequence, in what follows we merely focus on the properties of such networks.

\subsection{Definition and basic properties}\label{sec:2.1}
Let $\mathbb{N}:=\{1,2, \ldots \}$.

\begin{definition}[Standard feedforward multilayer neural network]\label{def:neural_networks}
	Let $L\in \mathbb{N}$, network parameters $\theta=\left ((W_{1}, b_{1}), \ldots, (W_{L},b_{L} ) \right )$ with $W_{i}\in \mathbb{R}^{n_{i}\times n_{i-1}}$, $b_{i}\in \mathbb{R}^{n_{i}}$, for $i=1,\ldots, L$ and $n_{i}\in \mathbb{N}$ for $i=0,\ldots, L$. Furthermore let $\sigma:\mathbb{R}\to \mathbb{R}$ be an arbitrary function. We say that a function $\mathcal{N}: \mathbb{R}^{n_{0}}\to \mathbb{R}^{n_{L}}$ is a neural network with weight matrices $(W_{i})_{i=1}^{L}$, bias vectors $(b_{i})_{i=1}^{L}$ (the network parameters) and activation function $\sigma$ if $\mathcal{N}(x)$ can be defined through the following recursive relation for any $x\in \mathbb{R}^{n_{0}}$:
	\begin{align}
	z_{0}&= x,\label{defNN1}\\
	z_{\ell}&=\sigma \left (W_{\ell} z_{\ell-1} + b_{\ell} \right ), \quad \ell=1,\ldots,  L-1, \label{defNN2}\\
	\mathcal{N}(x)&= W_{L} z_{L-1} + b_{L}. \label{defNN3}
	\end{align}
	The action of the activation function $\sigma$ in \eqref{defNN2} is understood in a component-wise sense, i.e., for a vector $y=(y^{1}, \ldots, y^{n})\in \mathbb{R}^{n}$ we have $\sigma(y):=(\sigma(y^{1}), \ldots, \sigma(y^{n}))$. More compactly,  
 $\mathcal{N}$ reads
 	\begin{equation}\label{defNN1_comp} 
	\mathcal{N}(x)= T_{L} \circ \sigma(T_{L-1}) \circ \cdots \circ \sigma(T_{2}) \circ \sigma (T_{1})(x), \quad x\in \RR^{n_{0}},
	\end{equation}
	where, for every $\ell=1,\ldots, L$, $T_{L}$ denotes the affine transformation $\mathbb{R}^{n_{\ell-1}}\ni z\mapsto W_{\ell} z + b_{\ell}\in\mathbb{R}^{n_\ell}$.\\[0.5em]
	We say that $\mathcal{N}$ is a ReLU neural network if $\sigma$ is the ReLU (Rectified Linear Unit) activation function
	\begin{equation}
	\sigma(t)=\max(t,0), \quad t\in \mathbb{R}.
	\end{equation}
	
\end{definition}

Following the standard neural network terminology, we say that a neural network defined as in \eqref{defNN1}--\eqref{defNN3}, has \emph{$L$ layers} and \emph{$L-1$ hidden layers}, with the latter denoting the operations in \eqref{defNN2}. The final operation \eqref{defNN3} is called the \emph{output layer}. Furthermore, $n_{i}$ is called the number of \emph{neurons} in the $i$-th layer, $i=1, \ldots L$, which is the number of rows of the weight matrix $W_{i}$. The number of neurons of a given layer is also called the \emph{width} of that very layer, while the number of layers is called the \emph{depth} of the network.

We note that a neural network as a function does not necessarily admit a unique representation with respect to the weight matrices, the bias vectors and the activation functions.  Furthermore in Definition \ref{def:neural_networks}, the input of the $\ell$-th layer consists only of the output $z_{\ell-1}$ of the previous layer. A more general neural network definition would allow the input for each layer to depend on the output of all previous layers. In that case, every $W_{\ell}$ would be a weight matrix of size $\mathbb{R}^{n_{i}\times (\sum_{k=0}^{\ell-1} n_{k})}$. However, since every network of that type can be realized by a network as in Definition \ref{def:neural_networks} (see also \cite{relu_Wsp}) we will always relate to the more classical definition given above.


We are interested in the regularity and structural properties of the class of functions that are realized by ReLU neural networks. It turns out that the latter coincides with the class of \emph{continuous piecewise affine functions}, which we define next.

\begin{definition}[Continuous piecewise affine functions]\label{def:cpwl}
	Let $n_{0}\in \mathbb{N}$. We say that a function $\mathcal{F}: \mathbb{R}^{n_{0}}\to \mathbb{R}$ is continuous piecewise affine  if 
	the following condition holds:
	\begin{itemize}
		\item $\mathcal{F}$ is continuous and there exist finitely many affine maps $f_{1}, \ldots, f_{p}: \mathbb{R}^{n_{0}}\to \mathbb{R}$ for some $p\in\mathbb{N}$ such that for every $x\in \mathbb{R}^{n_{0}}$, there exists an $i\in \{1,\ldots, p\}$ such that $\mathcal{F}(x)=f_{i}(x)$.
	\end{itemize}
\end{definition}

Note that such a map $\mathcal{F}$ is not necessarily Fr\'echet differentiable, and see \cite{aliprantis_harris_tourky_2006, arora2018understanding,hinging_hyperplanes} for
characterizatons and properties of continuous piecewise affine functions.
\begin{theorem}
	A  function $\mathcal{N}:\mathbb{R}^{n_{0}}\to \mathbb{R}$ is a ReLU neural network if and only if it is a continuous piecewise affine function.
\end{theorem}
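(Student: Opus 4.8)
The plan is to prove the two implications separately, with the forward implication (a ReLU network is continuous piecewise affine) being routine and the converse being the substantive part. For the forward direction, I would extend Definition~\ref{def:cpwl} to vector-valued maps by declaring $G\colon \mathbb{R}^{m}\to\mathbb{R}^{k}$ continuous piecewise affine when each of its components is, and then establish three closure properties: (i) every affine map is continuous piecewise affine; (ii) the scalar ReLU $\sigma(t)=\max(t,0)$ is continuous piecewise affine, hence so is its component-wise action $z\mapsto\sigma(z)$; and (iii) the composition $H\circ G$ of two continuous piecewise affine maps is again continuous piecewise affine. Property (iii) is the only one requiring an argument: on each of the finitely many polyhedral cells on which $G$ is affine, refining by the preimages of the finitely many cells of $H$ produces a finite polyhedral subdivision on which $H\circ G$ is affine, and continuity is inherited from the continuity of the factors. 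Applying these closure properties to the representation \eqref{defNN1_comp} of $\mathcal{N}$ as an alternating composition of affine maps $T_{\ell}$ and component-wise ReLUs, and inducting on the number of layers $L$, yields that $\mathcal{N}$ is continuous piecewise affine.

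For the converse, the key tool is a global lattice (max--min) representation of continuous piecewise affine functions: every $\mathcal{F}\colon\mathbb{R}^{n_0}\to\mathbb{R}$ as in Definition~\ref{def:cpwl} can be written in the form $\mathcal{F}(x)=\max_{i}\min_{j\in S_i}\ell_{ij}(x)$ for finitely many affine functions $\ell_{ij}$, or equivalently as a signed sum of finite maxima of affine functions. This is the content of the hinging-hyperplane and lattice representation results cited in \cite{aliprantis_harris_tourky_2006, arora2018understanding, hinging_hyperplanes}, which I would invoke rather than reprove. Given such a representation, it remains to realize the building blocks by ReLU networks and to show that the class of network-representable functions is closed under the operations appearing in the formula.

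The elementary identities driving this are $\max(a,b)=b+\sigma(a-b)$, $\min(a,b)=a-\sigma(a-b)$, and, crucially, $t=\sigma(t)-\sigma(-t)$, the last of which lets a value be transmitted unchanged across a ReLU layer. From these I would verify that network-representable functions are closed under (a) affine combination, (b) composition, by concatenating networks, (c) parallel stacking of several networks acting on the same input, and (d) pointwise $\max$ and $\min$ of two such functions. Combining a balanced binary tree of pairwise maxima and minima with an outer signed affine combination then assembles the lattice representation into a single ReLU network computing $\mathcal{F}$, which completes the equivalence.

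The main obstacle is twofold. Conceptually, it is the lattice representation theorem itself: the nontrivial geometric fact that an arbitrary continuous piecewise affine function, whose pieces may fit together along a complicated polyhedral complex, admits a single globally valid max--min formula in terms of its constituent affine maps. Technically, within the network construction the subtle point is depth synchronization: when combining two sub-networks of different depths with a pairwise $\max$, the shallower branch must be extended to the required depth without altering the value it carries, and this is exactly where the identity $t=\sigma(t)-\sigma(-t)$ is needed, since naively inserting ReLU layers would clip negative values. Once these two points are handled, equating the two function classes is immediate.
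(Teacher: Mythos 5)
Your proposal is correct and reconstructs the standard argument that the paper itself does not spell out but defers to via its citations (in particular the hinging-hyperplane/lattice-representation route of \cite{arora2018understanding, hinging_hyperplanes}): closure of continuous piecewise affine maps under composition for the forward direction, and the max--min representation together with the identities $\max(a,b)=b+\sigma(a-b)$ and $t=\sigma(t)-\sigma(-t)$ for the converse. You also correctly isolate the two genuinely nontrivial points, the lattice representation theorem and depth synchronization of parallel sub-networks, so nothing essential is missing.
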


From the definition \eqref{defNN1}--\eqref{defNN3} it is clear that $\mathcal{N}:\mathbb{R}^{n_{0}} \to \mathbb{R}^{n_{L}}$, $n_{L}\ge 1$, is a $\mathbb{R}^{n_{L}}$-valued ReLU neural network if and only if $\mathcal{N}=(\mathcal{N}_{1}, \ldots, \mathcal{N}_{L})$ with each $\mathcal{N}_{i}: \mathbb{R}^{n_{0}}\to \mathbb{R}$, $i=1, \ldots, L$, is a scalar-valued  ReLU neural network. Thus, $\mathcal{N}$ is an $\mathbb{R}^{n_{L}}$-valued ReLU neural network if and only if it is an $\mathbb{R}^{n_{L}}$-valued continuous piecewise affine function, with the latter defined exactly as in Definition \ref{def:cpwl} with the only difference that the affine maps $f_{i}$ are $\mathbb{R}^{n_{L}}$-valued.


\vspace{1em}\noindent

\subsection{Approximation results}

Next we recall several basic properties regarding the approximation capabilities of ReLU neural networks in the spirit of the several versions of the \emph{universal approximation theorem}; see \cite{Pin99} for a thorough review. In fact, in the case where the activation function is $k$-times differentiable and not a polynomial, the set of the corresponding one-hidden layer networks is dense in the set of $k$-times continuously differentiable functions in the topology of uniform convergence (for function values and derivatives up to order $k$) on compact sets. In the case of the ReLU activation function, the corresponding approximation results are less standard and are stated here for the ease of access.

Indeed, there exists a growing amount of literature that studies properties of ReLU networks with regards to their approximation of  functions of a given Sobolev regularity. Typically, special emphasis is given to the upper bound on the number of layers, neurons and nonzero weights needed to achieve approximation of a certain accuracy with respect to a Sobolev norm; see for instance \cite{relu_chainrule, relu_Wsp, yarotsky18a}. For our purposes, the following result taken from the aforementioned references will be of interest: Given an open, bounded domain $U\subset \mathbb{R}^{n_{0}}$ with Lipschitz boundary we have that
\begin{equation}\label{sobolev_approx_box}
\begin{aligned}
\text{for every $\epsilon>0$ and $f\in W^{1,\infty}(U)$}&\text{ there exists a ReLU network $\mathcal{N}_{\epsilon}:\mathbb{R}^{n_{0}}\to \mathbb{R}$ such that}\\
& \|\mathcal{N}_{\epsilon}-f\|_{W^{1,\infty}(U)}<\epsilon.
\end{aligned}
\end{equation}
Here, the Sobolev space $W^{1,\infty}(U)$ relates to Lipschitz functions on $U$; see \cite{Adams}. We also note that the density result in \eqref{sobolev_approx_box} is typically stated for $U=(-M,M)^{n_{0}}$ for a given $M>0$, but it is easy to see that it holds for every open, bounded $U\subset \mathbb{R}^{n_{0}}$ with Lipschitz boundary. Indeed, given such a domain $U$, consider a bounded, linear extension operator $T:W^{1,\infty}(U)\to W^{1,\infty}(\mathbb{R}^{n_{0}})$ such that $Tg$ has  compact support, which is common for every  $g\in W^{1,\infty}(U)$. Then we readily get
\begin{equation*}
\|\mathcal{N}_{\epsilon}-f\|_{W^{1,\infty}(U)} \le \|\mathcal{N}_{\epsilon} -Tf\|_{W^{1,\infty}((-M,M)^{n_{0}})}
\end{equation*}
for some large enough $M>0$.

\section{Nonsmooth and ReLU neural network informed PDE }\label{sec:ReLU_PDE}

We recall once again the structure of the learning-informed optimal control problem that motivates us in the following:
\begin{equation}\label{P}\tag{$P_{\mathcal{N}}$}
\begin{aligned}
&\text{minimize }\quad J(y,u):= \frac{1}{2} \|y-g\|_{L^{2}(\Omega)}^{2} + \frac{\alpha}{2} \|u\|_{L^{2}(\om)}^{2},\quad \text{ over } (y,u)\in H_{0}^{1}(\om) \times L^{2}(\om),\\
&\text{subject to }\left \{
\begin{aligned}
-\Delta y + \mathcal{N}(\cdot, y)&=u, \;\; \text{ in }\Omega,\\
y&=0, \;\; \text{ on }\partial \Omega,
\end{aligned}
\right. \quad \text{ and }\quad  u\in \mathcal{C}_{ad}.
\end{aligned}
\end{equation}
Note that the governing state equation in \eqref{P} is a semilinear elliptic PDE with the neural network $\mathcal{N}$ (approximating some unknown $f$) as a constituent. 
Concerning $\mathcal{N}:\RR^{d} \times \RR\to \RR$ we assume that it is a ReLU neural network which is monotonically increasing in the second variable. 
The objective $J$ is of tracking type and may be replaced by other suitable candidates allowing to prove existence of a solution to the associated optimal control problem. Such tracking objectives often arise, e.g., in engineering applications where one wishes to find a system state $y$ which is sufficiently close to some given desired state $g$ and which arises through some control action $u$ in the governing equation. The associated control is constrained by $\mathcal{C}_{ad}$, and its pertinent average $L^2$-cost is $\alpha>0$.  

As outlined above, our underlying assumption is that $\mathcal{N}$ is an approximation of an unknown, potentially nonsmooth function $f:\om\times \RR\to \RR$, with \eqref{P_intro} the corresponding optimal control problem. Thus, for most of our analysis we will adopt a more general perspective by considering functions $f$ that belong to a larger family than the one defined by ReLU neural networks only. In such a setting $f=\mathcal{N}$ becomes one particular instance. 
Concerning such a general $f:\om\times \mathbb{R}\to \mathbb{R}$, throughout we invoke the following: 
\begin{align}
&\text{For every $y\in \mathbb{R}$ the function $x\mapsto f(x,y)$ is Lebesgue measurable. Moreover} \tag{$A_{1}$}\label{assumption_1}\\[0.6em]
&\hspace{5cm} f(\cdot, 0)\in L^{\infty}(\om). \nonumber\\[0.6em]
&\text{$f$ is  Lipschitz continuous in $y$ on bounded sets, that is, for every $M>0$, there exists a}\tag{$A_{2}$}\label{assumption_2}\\
&\text{ constant $L=L(M)>0$, such that for all $y_{1}, y_{2}\in (-M, M):$}\nonumber\\[0.6em] 
&\hspace{3cm} |f(x,y_{1})-f(x,y_{2})|\le L|y_{1}-y_{2}|, \quad \text{ for almost every $x\in\om$}.\nonumber\\[0.6em]
&\text{For almost every $x\in \om$ the function $y\mapsto f(x,y)$ is monotone increasing.} \tag{$A_{3}$}\label{assumption_3}
\end{align}
\begin{align}
&\text{For almost every $x\in \om$ the function $y\mapsto f(x,y)$ is directionally differentiable with}\tag{$A_{4}$}\label{assumption_4}\\
&\text{its directional derivative at $y\in \RR$ in direction $h\in\RR$ given by}\nonumber\\[0.6em]
&\hspace{3.5cm}    f_{x}'(y;h)=\lim_{t_{n}\to 0^{+}} \frac{f(x,y+t_{n} h)-f(x,y)}{t_{n}}. \nonumber
\end{align}
Note that we do not necessarily require $y\mapsto f(x,y)$ to be strictly monotonically increasing. It is further clear that a ReLU neural network $\mathcal{N}:\RR^{d}\times \RR\to \RR$, restricted to $\om\times \RR$, which is monotonically increasing in the second variable satisfies assumptions \eqref{assumption_1}--\eqref{assumption_4}. In fact, it is even globally Lipschitz continuous in the second variable. 

Observe further that $\mathcal{N}$ is even Hadamard directionally differentiable with respect to the second variable. Then, using the chain rule for Hadamard directionally differentiable functions \cite[Proposition 2.47]{bonnans2013perturbation},
we can state a recursion formula for $\mathcal{N}_{x}'(y;h)$. For this, we first confine ourselves to a two-layer architecture and generalize afterwards. Indeed, let $z:=(x,y)$ and $N^{(2)}(z)=W_{2}\cdot \sigma(W_{1} z +b_{1}) + b_{2}$, $W_{2}\in \RR^{1\times n_{1}}$, $W_{1}\in \RR^{n_{1}\times (d+1)}$, $b_{1}\in \RR^{n_{1}}$, $b_{2}\in\RR$. Then we have for any $y,h\in \RR$ that
\begin{align}\label{direct_deriv_N2}
(N^{(2)})_{x}'(y;h)=W_{2}\cdot \left ( \mathbbm{1}_{(0,\infty)}(W_{1}z+b_{1}) W_{1}(:,n_{0})h 
+  \mathbbm{1}_{\{0\}}(W_{1}z+b_{1})  \mathrm{max}(0,W_{1}(:,n_{0})h) \right ). 
\end{align}
Here, $ W_{1}(:,n_{0})$ denotes the last column of $W_{1}$, and  $\mathbbm{1}_{(0,\infty)}(W_{1}z+b_{1})$ is a diagonal matrix, whose diagonal consists of the vector resulting from the componentwise action  of the characteristic function $\mathbbm{1}_{(0,\infty)}:\mathbb{R}\to\{0,1\}$, with $\mathbbm{1}_{(0,\infty)}(t)=1$ if $t\in \mathcal{M}:=
(0,\infty)$ and $\mathbbm{1}_{(0,\infty)}(t)=0$ otherwise, on the vector $W_{1}z+b_{1}$ -- similarly for the second summand in \eqref{direct_deriv_N2}. In general, recursively for $N^{(\ell)}=W_{\ell}\sigma(N^{\ell-1}(z)) + b_{\ell}$ we have
\begin{align}\label{direct_deriv_N_ell}
(N^{(\ell)})_{x}'(y;h)=W_{\ell} \cdot \left ( \mathbbm{1}_{(0,\infty)}(N^{(\ell-1)}(z)) (N^{(\ell-1)})_{x}'(y;h) 
+ \mathbbm{1}_{\{0\}} (N^{(\ell-1)}(z)) \max(0,(N^{(\ell-1)})_{x}'(y;h))  \right ).
\end{align}
Comparing \eqref{direct_deriv_N2}--\eqref{direct_deriv_N_ell} with
	\begin{equation}\label{relu_prime}
	\nabla \mathcal{N}(x)=W_{L}  \cdot \sigma'( N^{(L-1)}(x)) \cdot W_{L-1} \cdot \ldots \cdot \sigma'(N^{(1)}(x))\cdot W_{1},
	\end{equation}
the weak gradient of $\mathcal{N}$ \cite{relu_chainrule}, we note that while \eqref{relu_prime} holds almost everywhere, the formulas for the directional derivatives hold at every point.

For our analysis we will make use of the space 
\[Y:=\{y\in H_{0}^{1}(\om):\; \Delta y \in L^{2}(\om) \},\]
which is a separable Hilbert space equipped with the inner product $(y,v)_{Y}:= \int_{\om} \Delta y\Delta v + \nabla y \nabla v + yv\, dx$ and it is compactly embedded in $H_{0}^{1}(\om)$ \cite{christof}.

We will also denote by $F$ and $N$ the Nemytskii operators $y\mapsto F(y)$ and $y\mapsto N(y)$, with $F(y)(x)=f(x,y)$, $N(y)(x)=\mathcal{N}(x,y)$ for $y$ in some $L^{p}$ space. Note that under the assumptions \eqref{assumption_1}--\eqref{assumption_2}, $F$ is a well-defined operator from $L^{\infty}(\om)\to L^{\infty}(\om)$ which is Lipschitz continuous on bounded sets. Morever $N: L^{p}(\om)\to L^{p}(\om)$ is Lipschitz continuous for every fixed $1\le p\le \infty $.

\subsection{Existence and uniqueness}

The next proposition deals with the existence of solutions for the state equation of \eqref{P_intro} as well as continuity properties of the corresponding control-to-state map. As the proof is rather standard, we will only provide a sketch. We also note that here we closely follow \cite[Proposition 2.1]{christof} where $f(x,y)=\mathrm{max}(y,0)$.

\begin{proposition}\label{state_existence}
	Let $f:\om\times \RR\to \RR$ satisfy the assumptions \eqref{assumption_1}--\eqref{assumption_3}, and let $u\in L^{p}(\om)$ for some $p\ge 2$ and $p>\frac{d}{2}$. Then the state equation
	\begin{equation}\label{state}\tag{$E$}
	\left \{
	\begin{aligned}
	-\Delta y + f(\cdot, y)&=u, \;\; \text{ in }\Omega,\\
	y&=0, \;\; \text{ on }\partial \Omega,
	\end{aligned}
	\right. 
	\end{equation}
	admits a unique solution $y\in Y\cap C^{0,a}(\overline{\om})$, for some H\"older exponent $a>0$ depending only on $p, d$ and $\om$. Furthermore, for every $M>0$ there exists a constant $c_{a}>0$ (that depends on $M$) such that
	\begin{equation}\label{Holder_estimate}
	\|y\|_{C^{0,a}(\overline{\om})} \le c_{a}\|u-f(\cdot, 0)\|_{L^{p}(\om)}, \quad \text{for all } \|u\|_{L^{p}(\om)}\le M.
	\end{equation} 
	Finally, the control-to-state map $S:u\mapsto y$ has the following properties:
	\begin{enumerate}
		\item $S: L^{p}(\om)\to H_{0}^{1}(\om)$ is globally Lipschitz and weakly-strongly continuous.
		\item $S: L^{p}(\om)\to Y$ is weakly-weakly continuous. 
		\item $S: L^{p}(\om)\to Y\cap C^{0,a}(\overline{\om})$ is Lipschitz continuous on bounded sets of $L^{p}(\om)$. 
	\end{enumerate}
Whenever the nonlinearity is represented by a ReLU network $\mathcal{N}$ (i.e.\ $f=\mathcal{N}$), then the last Lipschitz continuity is also global and the constant $c_{a}$ does not depend on $M$.	
\end{proposition}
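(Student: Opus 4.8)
The plan is to build existence and uniqueness on the theory of monotone operators (Browder--Minty) and to obtain the stated regularity from De Giorgi--Stampacchia estimates, exploiting the monotonicity \eqref{assumption_3} at every step. The first difficulty is that under \eqref{assumption_2} the Nemytskii operator $F$ need not map $H_0^1(\Omega)$ into $H^{-1}(\Omega)$, since $f$ is only \emph{locally} Lipschitz. To circumvent this I would fix $k>0$ and replace $f$ by a globally Lipschitz, monotone truncation $f_k$ agreeing with $f$ on $\{\abs{y}\le k\}$ (extended affinely for $\abs{y}>k$). The operator $A_k:H_0^1(\Omega)\to H^{-1}(\Omega)$, $\langle A_k y,v\rangle=\int_\Omega \nabla y\cdot\nabla v\,dx + \int_\Omega f_k(\cdot,y)v\,dx$, is then bounded and hemicontinuous; it is monotone because $-\Delta$ is and $f_k$ is increasing, and coercive because the Dirichlet term dominates via Poincar\'e (the $f_k$ contribution is nonnegative after subtracting the $L^2$-bounded $f_k(\cdot,0)=f(\cdot,0)$). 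Browder--Minty then yields a unique $y_k\in H_0^1(\Omega)$.

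The crucial block is to bound $y_k$ independently of $k$. Writing $f(x,y)-f(x,0)=b(x)y$ with $0\le b\in L^\infty$ on the relevant range (from \eqref{assumption_2}--\eqref{assumption_3}), the equation reads $-\Delta y_k + b(x)y_k = u-f(\cdot,0)$ with $b\ge 0$. Since $u-f(\cdot,0)\in L^p(\Omega)$ with $p>\tfrac d2$, Stampacchia's truncation method (testing with $(\abs{y_k}-s)_+\,\mathrm{sign}(y_k)$) produces an $L^\infty$ bound, and the full De Giorgi--Nash--Moser theory then gives $y_k\in C^{0,a}(\overline\Omega)$ together with \eqref{Holder_estimate}; the constant $c_a$ enters only through $\norm{b}_{L^\infty}$, hence through the local Lipschitz constant $L(M)$. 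Once $\norm{y_k}_{L^\infty}\le k_0$ is known for large $k$, the truncation is inactive and $y:=y_{k_0}$ solves \eqref{state}. Boundedness of $y$ makes $f(\cdot,y)\in L^\infty\subset L^2$, so $\Delta y=f(\cdot,y)-u\in L^2$ and $y\in Y$. Uniqueness follows by testing the difference of two solutions with $y_1-y_2$: the Dirichlet term equals $\norm{\nabla(y_1-y_2)}_{L^2}^2$ and the $f$-term is nonnegative by \eqref{assumption_3}, forcing $y_1=y_2$ via Poincar\'e.

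For the map $S$, in (i) I would subtract the equations for $u_1,u_2$ and test with $y_1-y_2$; discarding the nonnegative monotone term and using Poincar\'e gives $\norm{S(u_1)-S(u_2)}_{H_0^1}\le C\norm{u_1-u_2}_{L^p}$, global Lipschitz continuity. Weak-strong continuity then follows: $u_n\rightharpoonup u$ forces $y_n$ bounded in $H_0^1$, and the compact embedding $H_0^1(\Omega)\hookrightarrow L^2(\Omega)$ together with a.e.\ convergence and the uniform $L^\infty$ bound lets me pass to the limit in $f(\cdot,y_n)$ by dominated convergence; the identity $\norm{\nabla(y_n-y)}_{L^2}^2\le \int_\Omega(u_n-u)(y_n-y)\,dx\to 0$ upgrades this to strong $H_0^1$ convergence. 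For (ii), $\Delta y_n=f(\cdot,y_n)-u_n$ is bounded in $L^2$, so $y_n$ is bounded in $Y$, and weak $Y$-limits are identified as $S(u)$ by (i). For (iii), the difference $w=y_1-y_2$ solves $-\Delta w+b(x)w=u_1-u_2$ with $0\le b\in L^\infty$ (bounded in terms of the common range of the $y_i$, hence of $M$), so De Giorgi again yields $\norm{w}_{C^{0,a}}\le c_a\norm{u_1-u_2}_{L^p}$, while $\norm{\Delta w}_{L^2}\le \norm{b}_{L^\infty}\norm{w}_{L^2}+\norm{u_1-u_2}_{L^2}$ closes the $Y$-estimate.

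Finally, when $f=\mathcal{N}$ the Lipschitz constant in \eqref{assumption_2} is global, so $b$ is bounded by an $M$-independent constant; the De Giorgi constant $c_a$ no longer depends on $M$ and the estimate in (iii) becomes global, which is the final claim. I expect the genuine obstacle to be this regularity block: establishing \eqref{Holder_estimate} with the correct \emph{linear} dependence on $\norm{u-f(\cdot,0)}_{L^p}$ and verifying that the De Giorgi constant depends on $f$ only through $\norm{b}_{L^\infty}$, since this is precisely where the $M$-dependence (and its disappearance in the ReLU case) is decided. The truncation step reconciling Browder--Minty with the merely local bound \eqref{assumption_2} is the secondary point requiring care.
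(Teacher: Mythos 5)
Your proposal is correct and follows essentially the same route as the paper: a globally Lipschitz monotone truncation $f_k$ combined with Browder--Minty, a $k$-independent Stampacchia $L^\infty$ bound to deactivate the truncation, De Giorgi/Gilbarg--Trudinger for the H\"older estimate, and the difference-quotient linearization $-\Delta w + b\,w = u_1-u_2$ with $0\le b\in L^\infty$ for the Lipschitz continuity into $Y\cap C^{0,a}(\overline{\Omega})$ (the paper's $\xi$). The only cosmetic deviations are the affine rather than constant extension in the truncation and your explicit tracking of how $c_a$ depends on $f$ only through $\|b\|_{L^\infty}$, which the paper leaves to the cited references.
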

\begin{proof}
	We only provide a general sketch of proof for the sake of completeness, since the arguments are standard. The first step is to consider a family of truncations $f_{k}$ of $f$ where for $k>0$ we define 
	\begin{equation}\label{f_truncation}
	f_{k}(x,y):=
	\begin{cases}
	f(x,-k), & \text{ if } y<-k,\\
	f(x,y), & \text { if } |y|\le k,\\
	f(x,k), & \text{ if } y>k.
	\end{cases}
	\end{equation}
	The associated state equation reads
	\begin{equation}\label{state_k}\tag{$E_{k}$}
	\left \{
	\begin{aligned}
	-\Delta y + f_{k}(\cdot, y)&=u, \;\; \text{ in }\Omega,\\
	y&=0, \;\; \text{ on }\partial \Omega.
	\end{aligned}
	\right. 
	\end{equation}
	An application of the Browder-Minty theorem gives that \eqref{state_k} admits a unique solution $y\in H_{0}^{1}(\om)$. Since $u\in L^{2}(\om)$ and $f_{k}$ is bounded, in particular $F_{k}:L^{2}(\om) \to L^{2}(\om)$ so we have that $\Delta y_{k}\in L^{2}(\om)$ as well, and hence $y_{k}\in Y$. The same theorem gives that the corresponding control-to-state map $S_{k}: L^{p}(\om)\to H_{0}^{1}(\om)$ is Lipschitz continuous (with a Lipschitz constant that does not depend on $k$) and also weakly-strongly continuous. In fact $S_{k}: L^{p}(\om)\to Y$ is also globally Lipschitz by using the fact $F_{k}$ is so as well (but here the Lipschitz constant depends on $k$). Moreover, one can  easily check that the  map from $S_{k}: L^{p}(\om)\to Y$ is also weakly-weakly continuous.
	
	By  adapting standard results of Stampachia's method, e.g. \cite[Theorem 4.5]{Tro10}, and crucially also using that $p>\frac{d}{2}$, one can show that $y_{k}\in L^{\infty}(\om)$ and there exists a constant $c_{\infty}>0$ independent of $u$ such that
	\begin{equation}\label{Linfty_bound}
	\|y_{k}\|_{L^{\infty}(\om)}\le c_{\infty} \|u-f_{k}(\cdot, 0)\|_{L^{p}(\om)}=c_{\infty} \|u-f(\cdot, 0)\|_{L^{p}(\om)}.
	\end{equation}
	By choosing a large enough $k>0$, it is clear that $y_{k}=y$ is the solution of the problem \eqref{state}, which is also unique from the monotonicity of $f$. The Lipschitz continuity of $S:L^{p}(\om)\to H_{0}^{1}(\om)$ as well as its weak-strong continuity follow readily. However, for  the Lipschitz continuity of  $S: L^{p}(\om) \to Y$ one needs to restrict to bounded sets of $L^{p}(\om)$ since the corresponding Lipschitz constant for $F_{k}$ depends on $k$. Note that this is not the case when $f$ is globally Lipschitz, e.g., a ReLU neural network.
	
	Finally, regarding the H\"older regularity and the corresponding estimate \eqref{Holder_estimate}, these follow from an application of \cite[Theorem 8.29]{gilbarg2015elliptic}.
Thus, it remains to show the Lipschitz continuity with respect to the $C^{0,a}$ semi-norm. For this purpose, let $u_{i}\in L^{p}(\om)$ and $y_{i}=S(u_{i})$, $i=1,2$, and define
\[\xi:=
\begin{cases}
\frac{F(y_{1})-F(y_{2})}{y_{1}-y_{2}} & \text{on } \{x\in\om: y_{1}(x)\ne y_{2}(x)\},\\
0 						      & \text{on } \{x\in\om: y_{1}(x)= y_{2}(x)\}.
\end{cases}
\]
Since $y_{1}, y_{2}\in C^{0,a}(\overline{\om})$, $\xi\ge 0$ is essentially bounded by a local Lipschitz constant of $F$ depending on $\max \{\|y_{1}\|_{\infty}, \|y_{2}\|_{\infty}\}$. Then, by definition of $\xi$ we have that $y_{1}-y_{2}$ solves the equation
	\begin{equation}\label{diff_y1y2}
	\left \{
	\begin{aligned}
	-\Delta y + \tilde{f}( y)&=u_{1}-u_{2}, \;\; \text{ in }\Omega,\\
	y&=0, \;\quad\quad\quad \text{ on }\partial \Omega,
	\end{aligned}
	\right. 
	\end{equation}
	where $\tilde{f}(y)=\xi y$. Obviously, the function $\tilde{f}$ satisfies the assumptions of this  proposition, and in particular we can apply the estimate \eqref{Holder_estimate} to obtain
	\[\|y_{1}-y_{2}\|_{C^{0,a}(\overline{\om})}\le \tilde{c}_{a}\|u_{1}-u_{2}-\tilde{f}(0)\|_{L^{p}(\om)}=\tilde{c}_{a}\|u_{1}-u_{2}\|_{L^{p}(\om)}.\]
	Note that, also in view of the discussion right after, the constant $\tilde{c}_{a}$ depends on the Lipschitz constant of $\tilde{f}$, that is $\|\xi\|_{L^{\infty}(\om)}$, which is controlled by $\|y_{1}\|_{L^{\infty}(\om)}+ \|y_{2}\|_{L^{\infty}(\om)}$. The latter can be controlled by $\|u_{1}\|_{L^{p}(\om)}, \|u_{2}\|_{L^{p}(\om)}$. Thus, in the end we get the required  Lipschitz continuity on bounded sets of $L^{p}(\om)$. Again for a globally Lipschitz $f$, as it is the case for a ReLU network $\mathcal{N}$, the Lipschitz continuity $S: L^{p}(\om)\to C^{0,a}(\overline{\om})$ is also global.	
\end{proof}

We add a remark regarding the constant $M>0$ in the statement of Proposition \ref{state_existence}. We note that in the case of locally Lipschitz $f$, the constant $c_{a}$ in \eqref{Holder_estimate} will also depend on $\|u\|_{L^{p}(\om)}$. This is because   $c_{a}$ depends on the local Lipschitz constant $L>0$ such that $\|f(\cdot, y)-f(\cdot, 0)\|_{L^{p}(\om)}\le L \|y\|_{L^{p}}$. This constant $L$  depends on $\|y\|_{L^{\infty}(\om)}$ which is controlled by $\|u\|_{L^{p}(\om)}$. More specifically, for $u$ in a bounded subset of $L^{p}(\om)$, one can take a uniform constant $c_{a}$ in \eqref{Holder_estimate}. If $f$ is globally Lipschitz, then $c_{a}$ can be chosen independently of $\|u\|_{L^{p}(\om)}$.

\subsection{Control-to-state map (differentiability)}
We will proceed by stating differentiability properties of the solution operator $S$ of the state equation \eqref{state}, under the assumptions \eqref{assumption_1}--\eqref{assumption_4}.

We start by mentioning that it is easy to check after an application of the dominated convergence theorem that under these assumptions, $F:L^{\infty}(\om)\to L^{p}(\om)$ is Hadamard directionally differentiable  for $1\le p<\infty$. The directional  derivative $F'(y;h)\in L^{p}(\om)$ is given by
\begin{equation}\label{F_dir_deriv}
F'(y;h)(x)=f_{x}'(y(x);h(x)).
\end{equation}
In fact if $y,h\in L^{\infty}(\om)$ and $(h_{n})_{n\in\NN}$ is a bounded sequence in $L^{\infty}(\om)$ with $h_{n}\to h$ in $L^{p}(\om)$, and $t_{n}\to 0^{+}$, then we have
\begin{equation}\label{had_diff_stronger}
\frac{F(y+t_{n}h_{n})-F(y)}{t_{n}} \to F'(y;h) \quad \text{ in } L^{p}(\om).
\end{equation}
In the case of a ReLU neural network,  we also have that $N:L^{p}(\om)\to L^{p}(\om)$ for $1\le p <\infty$ is Hadamard directional differentiable with the directional derivative $N_{x}'(y;h)\in L^{p}(\om)$, given by the analogous formula to \eqref{F_dir_deriv}.
The following proposition also is a simple adaptation of \cite[Theorem 2.2]{christof} to our case.

\begin{proposition}\label{cts_diff}
	The control-to-state map $S: L^{p}(\om)\to Y$ of \eqref{state} is Hadamard directional differentiable, that is, given $u\in L^{p}(\om)$, a direction $h\in L^{p}(\om)$, $(h_{n})_{n\in\NN}\subset L^{p}(\om)$ with $h_{n} \to h$ in $L^{p}(\om)$ and $t_{n}\to 0^{+}$ we have
	\[\frac{S(u+t_{n}h_{n})-S(u)}{t_{n}}\to S'(u;h) \quad \text{ in }Y.\]
	Moreover, $S'(u;h) :=z_{h}\in Y\cap C^{0,a}(\overline{\om})$ and it is the unique solution of 
	
	\begin{equation}\label{adjoint}\tag{$K$}
	\left \{
	\begin{aligned}
	-\Delta z_{h} + F'(y;z_{h})&=h, \;\; \text{ in }\Omega,\\
	z_{h}&=0, \;\; \text{ on }\partial \Omega,
	\end{aligned}
	\right. 
	\end{equation}
	where $y=S(u)$.
\end{proposition}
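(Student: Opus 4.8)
The plan is to first show that the linearized equation \eqref{adjoint} is itself well posed, and then to identify the limit of the difference quotients of $S$ with its unique solution. For the well-posedness, the key observation is that monotonicity of $f(x,\cdot)$ controls the structure of the directional derivative: writing $D^{+}\ge 0$ and $D^{-}\ge 0$ for the right and left derivatives of $f(x,\cdot)$ at $y(x)$, one has $f_{x}'(y;s)=D^{+}s$ for $s\ge 0$ and $f_{x}'(y;s)=D^{-}s$ for $s<0$. Hence $s\mapsto f_{x}'(y;s)$ is continuous (both branches vanish at $0$), Lipschitz with constant $\le L$, monotone increasing in $s$, and satisfies $s\,f_{x}'(y;s)\ge 0$. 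Consequently the Nemytskii operator $z\mapsto F'(y;z)$ is bounded, continuous and monotone from $L^{2}(\om)$ to $L^{2}(\om)$, so $z\mapsto -\Delta z + F'(y;z)$ is strongly monotone and coercive on $H_{0}^{1}(\om)$ (by Poincar\'e and the sign of the nonlinear term). The Browder--Minty theorem then yields a unique $z_{h}\in H_{0}^{1}(\om)$; since $|f_{x}'(y;z_{h})|\le L|z_{h}|$ is essentially bounded, the Stampacchia and elliptic-regularity bootstrap of Proposition \ref{state_existence} (crucially using $p>\tfrac{d}{2}$) upgrades $z_{h}$ to $Y\cap C^{0,a}(\overline{\om})$.

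Next I would set up the difference quotients. Writing $y:=S(u)$, $y_{n}:=S(u+t_{n}h_{n})$ and $w_{n}:=(y_{n}-y)/t_{n}$, subtracting the two instances of \eqref{state} and dividing by $t_{n}$ gives
\begin{equation*}
-\Delta w_{n} + \frac{F(y_{n})-F(y)}{t_{n}} = h_{n}.
\end{equation*}
Applying the Lipschitz estimates of Proposition \ref{state_existence}, items (1)--(3), on the bounded set containing the controls $u+t_{n}h_{n}$, I obtain that $\|w_{n}\|_{Y}$ and $\|w_{n}\|_{C^{0,a}(\overline{\om})}$ are bounded uniformly in $n$ by a multiple of $\sup_{n}\|h_{n}\|_{L^{p}(\om)}$; in particular $(w_{n})$ is bounded in $L^{\infty}(\om)$.

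I would then pass to the limit. By reflexivity of $Y$, extract a subsequence with $w_{n}\rightharpoonup w$ in $Y$; the compact embedding $Y\hookrightarrow H_{0}^{1}(\om)$ yields $w_{n}\to w$ strongly in $H_{0}^{1}(\om)$, hence in $L^{p}(\om)$. Since $y_{n}=y+t_{n}w_{n}$ with $(w_{n})$ bounded in $L^{\infty}(\om)$ and $w_{n}\to w$ in $L^{p}(\om)$, the strengthened Hadamard differentiability \eqref{had_diff_stronger} gives
\begin{equation*}
\frac{F(y_{n})-F(y)}{t_{n}}=\frac{F(y+t_{n}w_{n})-F(y)}{t_{n}}\longrightarrow F'(y;w)\quad\text{in }L^{p}(\om).
\end{equation*}
Passing to the limit in the equation for $w_{n}$ shows that $w$ solves \eqref{adjoint}, so $w=z_{h}$ by the uniqueness just established; as the limit is independent of the subsequence, the whole sequence satisfies $w_{n}\rightharpoonup z_{h}$ in $Y$. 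To upgrade to strong convergence I would read off $\Delta w_{n}=\frac{F(y_{n})-F(y)}{t_{n}}-h_{n}$, whose right-hand side converges strongly in $L^{2}(\om)$ (using $p\ge 2$ and the $L^{p}$-convergence above); combined with the strong $H_{0}^{1}$-convergence this gives $w_{n}\to z_{h}$ in $Y$, which is precisely the claimed Hadamard directional differentiability.

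The main obstacle is the convergence of the nonlinear difference quotient $\tfrac{F(y_{n})-F(y)}{t_{n}}\to F'(y;w)$. This requires the uniform $L^{\infty}$ (indeed $C^{0,a}$) bound on the $w_{n}$ so that \eqref{had_diff_stronger} is applicable, which in turn rests on the H\"older Lipschitz estimate of Proposition \ref{state_existence}. The delicate coupling is that identifying the limit through \eqref{had_diff_stronger} needs the \emph{strong} $L^{p}$-convergence of $w_{n}$, and this is supplied only by the compactness of the embedding $Y\hookrightarrow H_{0}^{1}(\om)$ applied to the weakly convergent subsequence.
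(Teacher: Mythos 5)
Your proposal is correct and follows essentially the same route as the paper: form the difference quotients, use the Lipschitz estimates and the compact embedding $Y\hookrightarrow H_{0}^{1}(\om)$ to extract a weakly-$Y$/strongly-$H_{0}^{1}$ convergent subsequence, identify the limit of the nonlinear quotient via \eqref{had_diff_stronger}, and recover strong $Y$-convergence from the equation. The only (cosmetic) difference is that you establish well-posedness of \eqref{adjoint} up front by a direct Browder--Minty argument on $z\mapsto F'(y;z)$, whereas the paper obtains the same conclusion at the end by checking that $(x,z)\mapsto f_{x}'(y(x);z(x))$ satisfies \eqref{assumption_1}--\eqref{assumption_3} and citing Proposition \ref{state_existence}.
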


\begin{proof}
	We denote $u_{n}:=u+t_{n}h_{n}\in L^{p}(\om)$, $y_{n}:=S(u_{n})$ and $z_{n}:=\frac{y_{n}-y}{t_{n}}$ for every $n\in\NN$. From the Lipschitz continuity of $S$, we deduce the existence of some $z_{h}\in Y$ such that $z_{n}\rightharpoonup z_{h}$ (weakly) in $Y$ and $z_{n}\to z_{h}$ (strongly) in $H_{0}^{1}(\om)$ (due to the compact embedding) along a subsequence. Furthermore we have
	\begin{equation}\label{quot}
	-\Delta z_{n} + \frac{F(y_{n})- F(y)}{t_{n}}=h_{n}.
	\end{equation}
	According to \eqref{had_diff_stronger}, the second  term of the left hand side  in \eqref{quot} converges to $F'(y,z_{h})$ 
	strongly in $L^{2}(\om)$, and since $h_{n}\to h$ in $L^{2}(\om)$, we  also get that $-\Delta z_{n}$ converges strongly to $-\Delta z_{h}$. Hence $z_{n}\to z_{h}$ strongly in $Y$. Taking the weak limit in $L^{2}(\om)$ in \eqref{quot}, we get that $z_{h}$ satisfies \eqref{adjoint}. Note now that fixing $y\in L^{\infty}(\om)$, we have that $F'(y;\cdot): L^{p}(\om)\to L^{p}(\om)$ is the Nemytskii  operator induced by $(x,z)\mapsto f_{x}'(y(x);z(x))$. It can be easily checked that the latter map satisfies the assumptions \eqref{assumption_1}--\eqref{assumption_3}. Hence, Theorem \ref{state_existence} applies and thus $z_{h}\in C^{0,a}(\overline{\om})$ as well. Since this solution is unique, the convergence $z_{n}\to z_{h}$ holds along the whole sequence.

\end{proof}

\subsection{Approximation results}

In this section we are concerned with certain approximation results regarding the state equation \eqref{state}. In particular, we consider approximating sequences for the nonlinear nonsmooth function $f$, and we show approximation results for the  sequence of solutions of the corresponding approximating state equations to the solution of the limit problem. We will consider approximations on two levels: The first level of approximation arises from the approximation of $f$ by a sequence of ReLU neural networks $\mathcal{N}_{n}$ in the sense of \eqref{sobolev_approx_box} and can be thought of as the capability of ReLU network-informed PDEs to approximate some ground truth nonsmooth model. The second level of approximation considers the approximation of a ReLU network-informed PDE by (a sequence of) PDEs that correspond to a  smoothing of the network. Concerning the latter we will also briefly discuss some complications which arise when the smoothed network results from simply smoothing the ReLU activation function. We recall that smoothing a general function $f$ is a typical first step in  standard methods that study the optimal control of PDEs that contain nonsmooth components. In this context, one often regularizes these components and subsequently considers the limit behaviour as regularization vanishes; compare, e.g., \cite{barbu1984optimal, christof, mignot_puel}.

\noindent
We start by studying the approximation of a general function $f$ by a sequence of ReLU neural networks.
\begin{proposition}\label{prop:state_N_n}
	Suppose that $f:\om \times \RR \to \RR$ satisfies \eqref{assumption_1}--\eqref{assumption_4} with the additional assumption that $f\in W_{loc}^{1,\infty}(\RR^{d}\times \RR)$. Moreover, let $u\in L^{p}(\om)$ for some $p\ge 2$ and $p>\frac{d}{2}$. Then, given $K>0$, there exists a sequence $(\mathcal{N}_{n})_{n\in\mathbb{N}}$ of ReLU neural networks $\mathcal{N}_{n}: \RR^{d}\times \RR \to \RR$, $n\in\mathbb{N}$, such that 
	\begin{equation}\label{w1infty_conv}
	\mathcal{N}_{n} \to f, \quad \text{ in } W^{1,\infty}(\om\times (-K,K)).
	\end{equation}
	Furthermore, there exists $K>0$ such that for sufficiently large $n\in \NN$, the approximating learning-informed PDE
	\begin{equation}\label{stateN}\tag{$E_{\mathcal{N}_{n}}$}
	\left \{
	\begin{aligned}
	-\Delta y + \mathcal{N}_{n}(\cdot, y)&=u, \;\; \text{ in }\Omega,\\
	y&=0, \;\; \text{ on }\partial \Omega,
	\end{aligned}
	\right. 
	\end{equation}
	has a unique solution $y_{n}\in Y\cap C^{0,a}(\overline{\om})$ satisfying \eqref{Holder_estimate}, with the corresponding control-to-state-map $S_{n}$ satisfying $(i)$--$(iii)$ of Proposition \ref{state_existence} (with global Lipschitz constants). Furthermore, it holds that 
	\begin{equation}
	y_{n} \to y, \quad \text{ strongly in } Y \text{ and in } C^{0,a}(\overline{\om}),
	\end{equation}
	where $y\in Y\cap C^{0,a}(\overline{\om})$ is the unique solution of \eqref{state}.
\end{proposition}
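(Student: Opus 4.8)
The plan is to prove the two assertions of Proposition~\ref{prop:state_N_n} in sequence: first the existence of the approximating ReLU networks $\mathcal{N}_n$ converging to $f$ in $W^{1,\infty}$, and then the convergence of the corresponding states $y_n \to y$. For the first part, I would invoke the Sobolev approximation result \eqref{sobolev_approx_box}, applied on the bounded Lipschitz domain $U = \om\times(-K,K)\subset\RR^{d+1}$. Since $f\in W^{1,\infty}_{loc}(\RR^d\times\RR)$, its restriction to any such $U$ lies in $W^{1,\infty}(U)$, so \eqref{sobolev_approx_box} yields, for each $\epsilon_n = 1/n$, a ReLU network $\mathcal{N}_n$ with $\|\mathcal{N}_n - f\|_{W^{1,\infty}(U)}<1/n$, giving \eqref{w1infty_conv}.

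The delicate point, and what I expect to be the main obstacle, is ensuring that each $\mathcal{N}_n$ is monotonically increasing in its second variable, so that Proposition~\ref{state_existence} applies to \eqref{stateN} and delivers a \emph{unique} solution $y_n$. A generic network from \eqref{sobolev_approx_box} need not be monotone. However, since $f$ is monotone increasing in $y$ and the convergence is in $W^{1,\infty}$, the partial derivative $\partial_y\mathcal{N}_n$ converges uniformly to $\partial_y f\ge 0$ on $U$; hence any negative part of $\partial_y\mathcal{N}_n$ is of order $1/n$ and can be made arbitrarily small. This is precisely the situation alluded to in the introduction: even without exact monotonicity, if the negative part of $\partial_y\mathcal{N}_n$ is small enough, the PDE operator $-\Delta + N_n(\cdot,\cdot)$ remains strongly monotone (the $-\Delta$ term contributes a coercivity constant via Poincar\'e that dominates the small negative contribution), so Browder--Minty still yields a unique solution. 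I would therefore argue that for $n$ large the state equation \eqref{stateN} is uniquely solvable and that the conclusions $(i)$--$(iii)$ of Proposition~\ref{state_existence} carry over with \emph{global} Lipschitz constants, since each $\mathcal{N}_n$, being a ReLU network, is globally Lipschitz in $y$; the explanation of choosing a single $K>0$ valid for large $n$ follows from the uniform a priori bound \eqref{Linfty_bound}, which bounds $\|y_n\|_{L^\infty}$ independently of $n$ once $\|\mathcal{N}_n(\cdot,0)\|$ is controlled via the uniform convergence.

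For the convergence $y_n\to y$, I would set $w_n := y_n - y$ and subtract the two state equations to obtain
\[
-\Delta w_n + \big(\mathcal{N}_n(\cdot,y_n) - f(\cdot,y)\big) = 0.
\]
Splitting the nonlinear difference as $\mathcal{N}_n(\cdot,y_n) - \mathcal{N}_n(\cdot,y) + \mathcal{N}_n(\cdot,y) - f(\cdot,y)$, I would test with $w_n$. The first difference, by monotonicity (or near-monotonicity) of $\mathcal{N}_n$, contributes a nonnegative (up to a vanishing perturbation) term, while the second difference is bounded in $L^\infty$ by $\|\mathcal{N}_n - f\|_{L^\infty(U)}\to 0$ using the uniform $L^\infty$ bound on $y_n$ from \eqref{Linfty_bound} to keep $y(x)$ inside $(-K,K)$. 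Poincar\'e coercivity of $-\Delta$ then yields $\|w_n\|_{H^1_0}\to 0$, and since $\Delta w_n = \mathcal{N}_n(\cdot,y_n)-f(\cdot,y)\to 0$ in $L^2(\om)$, one upgrades this to $w_n\to 0$ strongly in $Y$. Finally, the uniform H\"older estimate \eqref{Holder_estimate} gives a uniform bound in $C^{0,a}(\overline{\om})$; combined with strong $Y$-convergence and the compact embedding $C^{0,a}\hookrightarrow C^{0,a'}$ for $a'<a$, I would extract $y_n\to y$ in $C^{0,a}(\overline{\om})$, with uniqueness of the limit ensuring convergence of the whole sequence.
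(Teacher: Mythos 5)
Your overall strategy coincides with the paper's (approximate via \eqref{sobolev_approx_box}, rescue Browder--Minty by absorbing the small negative part of $\partial_y\mathcal{N}_n$ into the coercivity of $-\Delta$, then pass to the limit in the states), but there are two places where your argument as written does not close.

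First, the existence step. The smallness of the negative part of $\partial_y\mathcal{N}_n$ is only guaranteed on $\om\times(-K,K)$, because that is the only set on which \eqref{sobolev_approx_box} gives $W^{1,\infty}$ control; a ReLU network approximating $f$ on this bounded box may decrease steeply for $|y|>K$. The Browder--Minty theorem, however, requires (strong) monotonicity of the operator $y\mapsto -\Delta y+N_n(y)$ on all of $H_0^1(\om)$, where test functions take values outside $(-K,K)$, so your claim that the Poincar\'e coercivity dominates ``the small negative contribution'' is not justified for the untruncated operator. Your appeal to the a priori bound \eqref{Linfty_bound} to keep $y_n$ inside $(-K,K)$ is also circular at this stage, since that bound presupposes that a solution $y_n$ exists. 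The paper resolves this by first truncating $\mathcal{N}_n$ in the $y$-variable as in \eqref{f_truncation} at a level $k>K$ (with $K$ chosen as in \eqref{specialK}), proving strong monotonicity of the truncated operator $A_{n,k}$ for large $n$, obtaining a solution of the truncated problem, and only then using the $L^\infty$ estimate to conclude that this solution also solves \eqref{stateN}. Your proof needs this truncation (or an equivalent device) to be complete.

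Second, the $C^{0,a}$ convergence. A uniform bound in $C^{0,a}(\overline{\om})$ together with the compact embedding into $C^{0,a'}(\overline{\om})$, $a'<a$, only yields convergence in the \emph{weaker} H\"older norm, not in $C^{0,a}(\overline{\om})$ as the proposition asserts. The paper instead applies the estimate \eqref{Holder_estimate} (for the Laplacian) directly to the difference equation $-\Delta w_n=f(\cdot,y)-\mathcal{N}_n(\cdot,y_n)$, giving
\[
\|w_n\|_{C^{0,a}(\overline{\om})}\le c_a\|f(\cdot,y)-\mathcal{N}_n(\cdot,y_n)\|_{L^p(\om)}\longrightarrow 0,
\]
where the right-hand side is controlled by splitting into $\|f(\cdot,y)-f(\cdot,y_n)\|_{L^p(\om)}$ and $\|\mathcal{N}_n(\cdot,y_n)-f(\cdot,y_n)\|_{L^p(\om)}$. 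You should replace your compactness step with this direct estimate. Your $H_0^1$ and $Y$ convergence arguments, by contrast, are fine and in fact slightly more direct than the paper's weak-compactness route (they mirror the energy estimate used in Proposition \ref{oc_approximation}(i)).
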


\begin{proof}
	Given $K>0$, the $W^{1,\infty}(\om\times (-K,K))$ convergence \eqref{w1infty_conv} for some sequence $(\mathcal{N}_{n})_{n\in\NN}$ follows directly from \eqref{sobolev_approx_box}. 
	Note that every $\mathcal{N}_{n}$ satisfies assumptions \eqref{assumption_1}--\eqref{assumption_4} with the possible exception of \eqref{assumption_3}, that is, monotonicity in the second variable. We note however that this monotonicity was only used previously in the application of Browder-Minty's theorem in order to show that the operator $A: H_{0}^{1}(\om)\to H^{-1}(\om)$ with 
	\[\langle A(y), z \rangle_{H^{-1}(\om), H_{0}^{1}(\om)}:= \int_{\om} \nabla y \nabla z\,dx + \int_{\om} f(x,y) z\, dx,\]
	is strongly monotone. We argue that this is the case also for the corresponding operator $A_{n,k}$ (where $f$ has been substituted by the truncation $\mathcal{N}_{n,k}$ of $\mathcal{N}_{n}$ as in \eqref{f_truncation}). Indeed, let $c_{\om}$ to be the Poincar\'e (inequality) constant, set
	\begin{equation}\label{specialK}
	K:=c_{a}(\|u\|_{L^{p}(\om)} + (\|f(\cdot,0)\|_{L^{\infty}(\om)} + 1)|\om|^{1/p}   )>0,
	\end{equation}
	and fix $k>K$.
	Then we have for $y_{1}, y_{2}\in H_{0}^{1}(\om)$
	\begin{equation}\label{Ank_monoto_a}
	\langle A_{n,k}(y_{1})- A_{n,k}(y_{2}), y_{1}-y_{2} \rangle
	\ge \frac{1}{(c_{\om} +1)^{2}} \|y_{1}-y_{2}\|_{H_{0}^{1}(\om)}^{2}
	+ \int_{\om} (\mathcal{N}_{n,k}(x,y_{1}) - \mathcal{N}_{n,k}(x,y_{2})) (y_{1}-y_{2})\,dx.
	\end{equation}
	Due to the fact that $\mathcal{N}_{n,k} \to f_{k}$ in $W^{1,\infty}(\om\times \RR)$, we have that for large enough $n\in\NN$
	\begin{align*}\label{Ank_monoto_b}
	\int_{\om} (\mathcal{N}_{n,k}(x,y_{1}) - \mathcal{N}_{n,k}(x,y_{2})) (y_{1}-y_{2})\,dx
	&=\int_{\om} \big ((\mathcal{N}_{n,k}-f_{k})(x,y_{1})- (\mathcal{N}_{n,k}-f_{k})(x,y_{2})\big )(y_{1}-y_{2})\,dx\\
	&\;\; + \underbrace{\int_{\om} (f_{k}(x,y_{1})-f_{k}(x,y_{2}))(y_{1}-y_{2})\, dx}_{\ge 0}\\
	&\ge -\|\nabla (\mathcal{N}_{n,k}-f_{k})\|_{L^{\infty}(\om\times \RR)}\|y_{1}-y_{2}\|_{L^{2}(\om)}^{2}.
	\end{align*}
	Since the term $\|\nabla (\mathcal{N}_{n,k}-f_{k})\|_{L^{\infty}(\om\times \RR)}$ can be arbitrarily small for large enough $n\in\NN$, we have that the last negative term can be absorbed into $\frac{1}{(c_{\om} +1)^{2}} \|y_{1}-y_{2}\|_{H_{0}^{1}(\om)}^{2}$ in \eqref{Ank_monoto_a} and hence the operator $A_{n,k}$ is strongly monotone for large enough $n\in\NN$. Thus the corresponding approximating truncated problem $(E_{\mathcal{N}_{n,k}})$ has a solution. In order to show that \eqref{stateN} has a solution $y_{n}\in Y\cap C^{0,a}(\overline{\om})$ as well, we argue exactly as in the proof of Proposition \ref{state_existence}, keeping in mind that since $\|\mathcal{N}_{n,k}(\cdot,0)-f(\cdot,0)\|_{L^{\infty}(\om)}=\|\mathcal{N}_{n}(\cdot,0)-f(\cdot,0)\|_{L^{\infty}(\om)}\to 0$ for large enough $k\in\NN$ the $L^{\infty}$ norm of $y_{n_{k}}$ will be bounded by $K$. The estimate \eqref{Holder_estimate} and the corresponding continuity properties of the control-to-state map $S_{n}$ follow analogously. We note that since the Lipschitz constants of $\mathcal{N}_{n}$ are uniformly bounded (in particular their $k$-truncation), the same holds for the Lipschitz constants of the maps $S_{n}: L^{p}(\om)\to H_{0}^{1}(\om)$ and $S_{n}: L^{p}(\om)\to Y\cap C^{0,a}(\overline{\om})$.
	
	Using the convergence of $\mathcal{N}_{n}$ to $f$, it is easy to check  that the sequence $(y_{n})_{n\in\NN}$ is bounded in $Y$ and hence there exists an (unrelabeled) subsequence and $\zeta \in Y$ such that $y_{n}\rightharpoonup \zeta$ weakly in $Y$  and $y_{n}\to \zeta$ strongly in $H_{0}^{1}(\om)$. Note that in view of \eqref{Holder_estimate}, we also have $\|\zeta\|_{L^{\infty}(\om)}\le K$. Observe now  that for every $z\in L^{2}(\om)$
	\begin{align*}
	\left |\int_{\om} \mathcal{N}_{n}(x,y_{n})z\, dx -\int_{\om} f(x,\zeta)z\,dx \right |
	&\le \int_{\om} |\mathcal{N}_{n}(x,y_{n})z - \mathcal{N}_{n}(x,\zeta)z|\,dx +\int_{\om} |\mathcal{N}_{n}(x,\zeta)z-f(x,\zeta)z|\, dx\\
	&\le L\|y_{n}-\zeta\|_{L^{2}(\om)} \|z\|_{L^{2}(\om)}+ \|\mathcal{N}_{n}-f\|_{L^{\infty}(\om\times (-K,K))} \|z\|_{L^{2}(\om)}\to 0
	\end{align*}
	as $n \to \infty$, where we also used the fact that the Lipschitz constants $L_{n}$ of $\mathcal{N}_{n}$ on $\om\times [-K,K]$ are uniformly bounded (say by $L$). Hence $\mathcal{N}_{n}(x,y_{n})\rightharpoonup f(x,\zeta)$ in $L^{2}(\om)$ and thus by taking weak limits in \eqref{stateN}, we have that $\zeta\in Y$ is equal to the  solution $y$ of \eqref{state}. By uniqueness the weak convergence $y_{n}\rightharpoonup y$ (in $Y$) holds for the whole sequence. Note that the estimate
	\begin{equation}\label{laplacian_approx}
	\|\Delta y_{n}-\Delta y\|_{L^{2}(\om)}=\|\mathcal{N}_{n}(\cdot, y_{n})-f(\cdot,y)\|_{L^{2}(\om)}\le L\|y_{n}-y\|_{L^{2}(\om)} + |\om|\|\mathcal{N}_{n}-f\|_{L^{\infty}(\om\times (-K,K))}
	\end{equation}
	yields the strong convergence $y_{n}\to y$ in $Y$, keeping in mind that $\Delta$ induces the norm on $Y$. 
	
	Finally we would like to show that in addition we also have $\|y_{n}-y\|_{C^{0,a}(\overline{\om})}\to 0$ as $n\to \infty$. For that purpose note that $w_{n}:=y_{n}-y$ solves the following PDE:
	\begin{equation}\label{state_diff}
	\left \{
	\begin{aligned}
	-\Delta w_{n} &=f(\cdot,y)-\mathcal{N}_{n}(\cdot,y_{n}), \;\; \text{ in }\Omega,\\
	w_n&=0, \,\;\;\qquad\qquad\qquad\quad \text{ on }\partial \Omega.
	\end{aligned}
	\right. 
	\end{equation}
	Using the same arguments as in Proposition  \ref{state_existence}, we have that 
	\[\|y_{n}-y\|_{C^{0,a}(\overline{\om})}=\|w_{n}\|_{C^{0,a}(\overline{\om})}\le c_{a}\|f(\cdot,y)-\mathcal{N}_{n}(\cdot,y_{n})\|_{L^{p}(\om)},\]
	and thus it suffices to show that the latter is going to zero. We have
	\[\|f(\cdot,y)-\mathcal{N}_{n}(\cdot,y_{n})\|_{L^{p}(\om)}
	\le \|f(\cdot,y)-f(\cdot,y_{n})\|_{L^{p}(\om)} + \|\mathcal{N}_{n}(\cdot,y_{n})-f(\cdot,y_{n})\|_{L^{p}(\om)}.\]
	The first term on the right hand side tends to zero since $y_{n}\to y$ a.e.\ (up to a subsequence) and by applying the dominated convergence theorem, while the second term vanishes since $\|\mathcal{N}_{n}-f\|_{L^{\infty}(\om\times (-K,K))}\to 0$. 
\end{proof}

We will now pass to the second level of approximation, which is more related  to techniques in the optimal control of nonsmooth PDEs, where  an optimality system is derived as a limit of optimality systems corresponding to smooth approximations for the PDE. Here, we will initially consider the learning-informed case and study smoothings of the involved ReLU network $\mathcal{N}$, also providing a word of caution about some complications that can result from such smoothings.  

In particular, here we will consider the following ReLU informed semilinear PDE
\begin{equation}\label{state_N}\tag{$E_{\mathcal{N}}$}
\left \{
\begin{aligned}
-\Delta y + \mathcal{N}(\cdot, y)&=u, \;\; \text{ in }\Omega,\\
y&=0, \;\; \text{ on }\partial \Omega,
\end{aligned}
\right. 
\end{equation}
and its corresponding  smoothed version
\begin{equation}\label{state_N_eps}\tag{$E_{\mathcal{N}_{\epsilon}}$}
\left \{
\begin{aligned}
-\Delta y + \mathcal{N}_{\epsilon}(\cdot, y)&=u, \;\; \text{ in }\Omega,\\
y&=0, \;\; \text{ on }\partial \Omega.
\end{aligned}
\right. 
\end{equation}
A very natural way to produce smooth approximations $\mathcal{N}_{\epsilon}$ of $\mathcal{N}$ is  simply via smoothing its activation function. This type of smoothing, here referred to as  \emph{canonical smoothing} of $\mathcal{N}$, is studied in detail in \cite{DonHinPap22b}. 
Note, however, that nonmonotonicity of $\mathcal{N}_{\epsilon}$ may arise after such a smoothing. In fact, as we saw in Proposition \ref{prop:state_N_n}, we were able to prove strong monotonicity for the operator $A_{n,k}$ (for large enough $n$) despite the fact that $\mathcal{N}_{n}$ was not monotone itself. This was done by taking advantage of the $L^{\infty}$ convergence of $\nabla \mathcal{N}_{n,k}$ to $\nabla f_{k}$, the latter being positive almost everywhere. But in the case of a canonical smoothing $\mathcal{N}_{\epsilon}$, the convergence of $\nabla \mathcal{N}_{\epsilon}$ to $\nabla \mathcal{N}$ as $\epsilon\to 0$ can only be guaranteed to hold with respect to the $L^{p}$ norm for every $1\le p<\infty$, see \cite[Remark 2.9]{DonHinPap22b} making the application of the Browder-Minty theorem problematic, see the corresponding section in \cite{DonHinPap22b} .

We stress here that our purpose of using $\mathcal{N}_{\epsilon}$ is to obtain a limiting optimality system as $\epsilon\to 0$ for the optimal control with respect to the learning-informed PDE \eqref{P}.  
Hence, for the following proposition, as well as for the passage to the limit later on, we will assume that $\mathcal{N}_{\epsilon}$ remains monotone with respect to the second variable, a property that can be achieved theoretically (but perhaps not practically in implementations) via convolution; see also Remark \ref{rem:state_f_eps}. We thus set here
\begin{equation}\label{N_eps_mollified}
\mathcal{N}_{\epsilon}(x,y):=\int_{\mathbb{R}} \rho_{\epsilon} (y-z)\mathcal{N}(x,z)\,dz,
\end{equation}
where $(\rho_{\epsilon})_{\epsilon>0}$ is the family of standard  mollifiers \cite{yosidafunctional}. Note that we have $\mathcal{N}_{\epsilon}\to \mathcal{N}$ uniformly as $\epsilon \to 0$, $\|\nabla \mathcal{N}_{\epsilon}\|_{L^{\infty}(\om\times \RR)}$ is uniformly bounded, and each $\mathcal{N}_{\epsilon}$ satisfies the assumptions \eqref{assumption_1}--\eqref{assumption_4}.

Due to the linear growth of $\mathcal{N}_{\epsilon}$ and the $L^{\infty}$ bound for its gradient, we have that the Nemytskii operator $N_{\epsilon}$ with $N_{\epsilon}(y)(x)=\mathcal{N}_{\epsilon}(x,y)$ is continuously Fr\'echet differentiable from $L^{p}(\om)\to L^{q}(\om)$ for every $1\le q<p<\infty$, with derivative $N_{\epsilon}': L^{p}(\om)\to \mathcal{L}(L^{p}(\om),L^{q}(\om)$) defined as
\begin{equation}
(N_{\epsilon}'(y)h)(x)= \nabla_{y}\mathcal{N}_{\epsilon}(x,y(x))  h(x), \quad \text{ for a.e. }x\in\om.
\end{equation}
Consequently, $N_{\epsilon}$ is also continuously Fr\'echet differentiable as a function from $H_{0}^{1}(\om)\to L^{q}(\om)$, for any $1\le q< \frac{2d}{d-2}$ ($d\ge 3$), for any $1\le q<\infty$, ($d=2$), and for any $1\le q\le \infty$ ($d=1$). In particular, $N_{\epsilon}$ is continuously Fr\'echet differentiable from $H_{0}^{1}(\om)\to L^{2}(\om)$ for any dimension $d\ge 1$. 
Recall that $\nabla_{y}\mathcal{N}_{\epsilon}(x,y(x)) \in L^{\infty}(\om \times \RR)$ with its explicit form derived
	\begin{equation}\label{relu_eps_prime}
	\nabla \mathcal{N}_{\epsilon}(x)=W_{L}  \cdot \sigma_{\epsilon}'( N_{\epsilon}^{(L-1)}(x)) \cdot W_{L-1} \cdot \ldots \cdot \sigma_{\epsilon}'(N_{\epsilon}^{(1)}(x))\cdot W_{1}.
	\end{equation}

\begin{proposition}\label{prop:state_N_eps}
	Let $\mathcal{N}:\RR^{d}\times \RR \to \RR$ be a ReLU network that satisfies assumptions \eqref{assumption_1}--\eqref{assumption_4} and let $(\mathcal{N}_{\epsilon})_{\epsilon>0}$ be a  smoothing of $\mathcal{N}$ defined as in \eqref{N_eps_mollified}. Moreover, let $u\in L^{p}(\om)$ for some $p\ge 2$ and $p> \frac{d}{2}$. Then 
	the  smoothed state equation \eqref{state_N_eps} has a unique solution $y_{\epsilon}\in Y\cap C^{0,a}(\overline{\om})$. Furthermore the following hold:
	\begin{enumerate}
		\item The corresponding control-to-state map $S_{\epsilon}$ satisfies the continuity properties  $(i)$--$(iii)$ of Proposition \ref{state_existence} (with global Lipschitz constants independent of $\epsilon>0$).
		\item $S_{\epsilon}: L^{p}(\om)\to Y$ is Fr\'echet differentiable. Its derivative at $u\in L^{p}(\om)$ in direction $h\in L^{p}(\om)$ is given by the unique solution of
		\begin{equation}\label{adjoint_eps}\tag{$K_{\epsilon}$}
		\left \{
		\begin{aligned}
		-\Delta z_{h} + N_{\epsilon}'(y_{\epsilon})z_{h}&=h, \;\; \text{ in }\Omega,\\
		z_{h}&=0, \;\; \text{ on }\partial \Omega,
		\end{aligned}
		\right. 
		\end{equation}
		where $y_{\epsilon}=S_{\epsilon}(u)$.
	\end{enumerate}
\end{proposition}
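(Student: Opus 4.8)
The plan is to read off existence, uniqueness and the continuity statements almost directly from Proposition \ref{state_existence}, and then to obtain Fréchet differentiability from the implicit function theorem, the only substantive work being the invertibility of the linearized operator.

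\textbf{Existence, uniqueness and part (1).} First I would observe that, since $(\rho_\epsilon)_{\epsilon>0}$ are standard mollifiers and $\mathcal{N}$ is monotone in its second argument, the convolution \eqref{N_eps_mollified} preserves monotonicity; hence each $\mathcal{N}_\epsilon$ satisfies \eqref{assumption_1}--\eqref{assumption_4}. Proposition \ref{state_existence} therefore applies verbatim to $\mathcal{N}_\epsilon$ and yields a unique $y_\epsilon\in Y\cap C^{0,a}(\overline{\om})$ satisfying \eqref{Holder_estimate}, together with the continuity properties (i)--(iii) of $S_\epsilon$. For the uniformity asserted in (1), I would invoke the final sentence of Proposition \ref{state_existence} for globally Lipschitz nonlinearities: $\mathcal{N}_\epsilon$ is globally Lipschitz in $y$ with constant bounded by $\|\nabla\mathcal{N}_\epsilon\|_{L^\infty(\om\times\RR)}$, and since both this quantity and $\|\mathcal{N}_\epsilon(\cdot,0)\|_{L^\infty(\om)}$ are bounded uniformly in $\epsilon$ (by the remarks preceding the statement, using $\mathcal{N}_\epsilon\to\mathcal{N}$ uniformly), all the resulting Lipschitz constants of $S_\epsilon$ are global and independent of $\epsilon$.

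\textbf{Differentiability via the implicit function theorem.} For part (2) I would set
\begin{equation*}
E: Y\times L^p(\om)\to L^2(\om),\qquad E(y,u):=-\Delta y + N_\epsilon(y)-u,
\end{equation*}
and check that $E$ is continuously Fréchet differentiable: $-\Delta$ is bounded linear from $Y$ to $L^2(\om)$, the embedding $Y\hookrightarrow H_0^1(\om)$ is continuous, $N_\epsilon:H_0^1(\om)\to L^2(\om)$ is continuously Fréchet differentiable (as recorded before the statement), and $u\mapsto-u$ is bounded linear using $L^p(\om)\hookrightarrow L^2(\om)$. Its $y$-partial derivative acts as $z\mapsto-\Delta z + N_\epsilon'(y)z$ with $N_\epsilon'(y)(x)=\nabla_y\mathcal{N}_\epsilon(x,y(x))$.

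\textbf{The main step.} The hard part will be to show that $\partial_y E(y_\epsilon,u):Y\to L^2(\om)$ is a topological isomorphism; here monotonicity of $\mathcal{N}_\epsilon$ is decisive. It forces the zeroth-order coefficient $q:=\nabla_y\mathcal{N}_\epsilon(\cdot,y_\epsilon)$ to be nonnegative and, by the uniform gradient bound, essentially bounded. Hence the bilinear form $(z,v)\mapsto\int_\om\nabla z\cdot\nabla v + q\,zv\,dx$ is bounded and coercive on $H_0^1(\om)$---coercivity coming from the Dirichlet part via Poincaré while the $q$-term is nonnegative---so Lax--Milgram supplies, for every $h\in L^2(\om)$, a unique $z_h\in H_0^1(\om)$ with $-\Delta z_h + q\,z_h=h$. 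Since $q\,z_h\in L^2(\om)$ one gets $\Delta z_h\in L^2(\om)$, i.e.\ $z_h\in Y$, so $\partial_y E(y_\epsilon,u)$ is a continuous linear bijection of $Y$ onto $L^2(\om)$ and thus an isomorphism by the open mapping theorem. The implicit function theorem then gives that $S_\epsilon$ is continuously Fréchet differentiable from $L^p(\om)$ into $Y$ (the local solution map coinciding with $S_\epsilon$ by the global uniqueness above), and differentiating $E(S_\epsilon(u),u)=0$ yields $S_\epsilon'(u)h=z_h$, the unique $Y$-solution of \eqref{adjoint_eps}. I expect this invertibility step---in particular ensuring that the solution lands in $Y$ rather than merely in $H_0^1(\om)$---to be the only genuine obstacle, everything else reducing to the already-established regularity of $N_\epsilon$ and to Proposition \ref{state_existence}.
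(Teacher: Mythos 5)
Your proposal is correct and follows essentially the same route as the paper: monotonicity is preserved under mollification so the machinery of Proposition \ref{state_existence} applies (with uniform constants from the uniform bound on $\|\nabla\mathcal{N}_\epsilon\|_{L^\infty}$), and Fr\'echet differentiability comes from the implicit function theorem using the continuous differentiability of $N_\epsilon$. The only difference is that you spell out the invertibility of the linearized operator via Lax--Milgram and the bootstrapping into $Y$, which the paper leaves implicit by appealing to the nonnegativity of $N_\epsilon'(y_\epsilon)$ and the solvability theory already established.
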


\begin{proof}
	Since each $\mathcal{N}_{\epsilon}$ satisfies the assumptions \eqref{assumption_1}--\eqref{assumption_4} and has at most linear growth in the second variable, we can directly apply the Browder-Minty theorem, i.e.\ without the truncation argument, and as before show that the smoothed state equation \eqref{state_N_eps} has a unique solution $y_{\epsilon}\in Y\cap C^{0,a}(\overline{\om})$. 
	
	For $(i)$ note that here the continuity properties $(i)$-$(iii)$ of Proposition \ref{state_existence} follow as before. The fact that the Lipschitz constants are global, i.e.\ independent of $\epsilon>0$, follows from the fact that $\|\nabla \mathcal{N}_{\epsilon}\|_{L^{\infty}}$ 
	and hence the Lipschitz constants of $\mathcal{N}_{\epsilon}$ are uniformly bounded in $\epsilon$.
	
	For $(ii)$, note first that since $N_{\epsilon}'(y_{\epsilon})$ is nonnegative, the equation \eqref{adjoint_eps} has a unique solution $z_{h}\in Y\cap C^{0,\alpha}(\overline{\om})$. Then, given the continuous Fr\'echet differentiability of $N_{\epsilon}$, an application of the implicit function theorem gives the  Fr\'echet differentiability of $S_{\epsilon}: L^{p}(\om)\to Y$.
	
	
\end{proof}

\begin{remark}\label{rem:state_f_eps}
One can get results analogous to Proposition \ref{prop:state_N_eps} for the problem involving the function $f$ under assumptions \eqref{assumption_1}--\eqref{assumption_4}. Indeed, using the smoothing $f_{\epsilon}(x,y):= \int_{\mathbb{R}} \rho_{\epsilon}(y-z)f(x,z)dz$ for almost every $x\in\om$, we have that for almost every $x\in \om$, $f_{\epsilon}(x,\cdot)\in C^{\infty}(\mathbb{R})$, and that it is monotonically increasing. Moreover, $f_{\epsilon}$ is uniformly Lipschitz continuous in $y$ on bounded sets, i.e., for every $M>0$, there exists a constant $L=L(M)$ independent of $\epsilon>0$ such that for every $y_{1},y_{2}\in (-M,M)$
\begin{equation}\label{f_eps_uni_Lip}
|f_{\epsilon}(x,y_{1})-f_{\epsilon}(x,y_{2})|\le L|y_{1}-y_{2}|, \quad \text{for almost every }x\in\om.
\end{equation}
Furthemore, for every $M>0$, there exists a constant $c>0$ such that for all $y\in (-M, M)$
\begin{equation}\label{f_eps_minus_f}
|f_{\epsilon}(x,y)-f(x,y)|\le c\epsilon\quad \text{for almost every }x\in\om.
\end{equation}
The analogous inequalities in $L^{p}$ hold for the superposition operators $F_{\epsilon}$ when restricting $y$ to a $L^{\infty}(\om)$ ball. Then by considering 
\begin{equation}\label{state_f_eps}\tag{$E_{f_{\epsilon}}$}
\left \{
\begin{aligned}
-\Delta y + f_{\epsilon}(\cdot, y)&=u, \;\; \text{ in }\Omega,\\
y&=0, \;\; \text{ on }\partial \Omega,
\end{aligned}
\right. 
\end{equation}
we have that analogous results to the ones of Proposition \ref{prop:state_N_eps} hold with the only difference, that the Lipschitz continuity of $S_{\epsilon}: L^{p}(\om)\to Y\cap C^{0,\alpha}(\overline{\om})$ is local (on bounded sets of $L^{p}(\om)$), but still with Lipschitz constants independent of $\epsilon$. Furthermore, in that case we can only show  G\^ateaux differentiability for  $S_{\epsilon}: L^{p}(\om)\to Y$, with the derivative at $u\in L^{p}(\om)$ in direction $h\in L^{p}(\om)$ given by the solution to \eqref{adjoint_eps} where $N_{\epsilon}'$ is substituted by $F_{\epsilon}'(y)(x):=\partial_{y} f_{\epsilon}(x,y(x))$.
\end{remark}

\section{Existence and approximation results of optimal control}
\label{sec:wellposedness}

We return now to the optimal control problem \eqref{P_intro}.
 To be consistent in our notation, we denote by  (\hypertarget{PNeps}{$P_{\mathcal{N}_{\epsilon}}$}) the optimal control problem corresponding to the state equation \eqref{state_N_eps}. We also define the reduced problem for \eqref{P_intro} as follows:
\begin{equation}\label{P_red}\tag{$\hat{P}_{f}$}
\min_{u\in \mathcal{C}_{ad}} \mathcal{J}(u):= J(S(u),u):=\frac{1}{2}\|S(u)-g\|_{L^{2}(\om)}^{2} +\frac{\alpha}{2} \|u\|^2_{L^{2}(\om)},
\end{equation}
and analogously we define the reduced objectives $\mathcal{J}_{\mathcal{N}}$ and $\mathcal{J}_{\mathcal{N}_{\epsilon}}$. Further, we consider  $\mathcal{C}_{ad}$ to be a nonempty, bounded, closed, convex subset of $L^{p}(\om)$ with $p\geq 2$ and $p>d/2$.

\begin{proposition}\label{oc_existence}
	The optimal control problem 
	\begin{equation}\label{reduced_prop}
	\min_{u\in \mathcal{C}_{ad}} \mathcal{J}(u)
	\end{equation}
	has a solution.
\end{proposition}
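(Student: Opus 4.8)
The plan is to apply the direct method of the calculus of variations. First I would observe that $\mathcal{J}$ is well defined on all of $\mathcal{C}_{ad}$: since $\mathcal{C}_{ad}\subset L^{p}(\om)$ with $p\geq 2$ and $p>d/2$, the control-to-state map $S$ is available from Proposition \ref{state_existence}, and $\mathcal{J}(u)\geq 0$ for every $u\in\mathcal{C}_{ad}$. Hence the infimum $m:=\inf_{u\in\mathcal{C}_{ad}}\mathcal{J}(u)$ is a finite nonnegative number (finite because $\mathcal{C}_{ad}$ is nonempty), and I would fix a minimizing sequence $(u_{n})_{n\in\NN}\subset\mathcal{C}_{ad}$ with $\mathcal{J}(u_{n})\to m$.

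Next I would extract a weak limit. Since $p\geq 2>1$, the space $L^{p}(\om)$ is reflexive, and $\mathcal{C}_{ad}$ is bounded, so after passing to a (non-relabeled) subsequence there is some $\bar{u}\in L^{p}(\om)$ with $u_{n}\rightharpoonup\bar{u}$ weakly in $L^{p}(\om)$. Because $\mathcal{C}_{ad}$ is convex and closed it is weakly sequentially closed (Mazur's lemma), so $\bar{u}\in\mathcal{C}_{ad}$ and the candidate minimizer is admissible.

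It then remains to show weak sequential lower semicontinuity of $\mathcal{J}$ along this sequence, which I would split into its two summands. For the tracking term I would invoke the weak-strong continuity of the control-to-state map established in Proposition \ref{state_existence}(i): from $u_{n}\rightharpoonup\bar{u}$ in $L^{p}(\om)$ it follows that $S(u_{n})\to S(\bar{u})$ strongly in $H_{0}^{1}(\om)$, hence strongly in $L^{2}(\om)$, so that $\|S(u_{n})-g\|_{L^{2}(\om)}^{2}\to\|S(\bar{u})-g\|_{L^{2}(\om)}^{2}$; this summand is in fact continuous. For the regularization term I would use that, $\om$ being bounded and $p\geq 2$, weak convergence in $L^{p}(\om)$ entails weak convergence in $L^{2}(\om)$, and the convex continuous functional $u\mapsto\|u\|_{L^{2}(\om)}^{2}$ is weakly lower semicontinuous, giving $\|\bar{u}\|_{L^{2}(\om)}^{2}\leq\liminf_{n}\|u_{n}\|_{L^{2}(\om)}^{2}$. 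Combining the two yields $\mathcal{J}(\bar{u})\leq\liminf_{n}\mathcal{J}(u_{n})=m$, and since $\bar{u}\in\mathcal{C}_{ad}$ forces $\mathcal{J}(\bar{u})\geq m$, I conclude $\mathcal{J}(\bar{u})=m$, i.e.\ $\bar{u}$ solves \eqref{reduced_prop}.

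The argument is entirely standard, and the only ingredient beyond routine functional analysis is the weak-strong continuity of $S$; this is precisely what makes the possibly nonsmooth nonlinearity harmless here, so once Proposition \ref{state_existence} is in hand I expect no genuine obstacle. In particular, the coercivity of the control cost $\frac{\alpha}{2}\|\cdot\|_{L^{2}(\om)}^{2}$ is not even needed for existence, since boundedness of $\mathcal{C}_{ad}$ already supplies the required compactness of the minimizing sequence.
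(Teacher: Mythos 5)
Your proof is correct and follows essentially the same route as the paper: the direct method, with boundedness of $\mathcal{C}_{ad}$ in the reflexive space $L^{p}(\om)$ supplying a weakly convergent subsequence, Mazur's lemma giving admissibility of the limit, the weak-to-strong continuity of $S$ from Proposition \ref{state_existence} handling the tracking term, and weak lower semicontinuity of the norm handling the control cost. The only cosmetic difference is that you invoke item $(i)$ of Proposition \ref{state_existence} directly, whereas the paper cites item $(ii)$ (weak-weak continuity into $Y$) and implicitly uses the compact embedding $Y\hookrightarrow H_{0}^{1}(\om)$; both yield $S(u_{n_k})\to S(\bar u)$ in $L^{2}(\om)$.
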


\begin{proof}
	The proof is a standard application of the direct method of calculus of variations. Let $(u_{n})_{n\in \NN}$ be a minimizing sequence for $\eqref{reduced_prop}$ in $L^{p}(\om)$. Since $\mathcal{C}_{ad}$ is bounded in $L^{p}(\om)$, there exists a subsequence $u_{n_{k}}\rightharpoonup u$ for some $u\in L^{p}(\om)$, which also belongs to $\mathcal{C}_{ad}$ by Mazur's theorem. From Proposition \ref{state_existence} $(ii)$, we have that $S(u_{n_{k}})\to S(u)$ in $L^{2}(\om)$. Then the results follows from the lower semicontinuity of $\mathcal{J}$.
\end{proof}
\noindent
Observe that in the case $p=2$, boundedness of $\mathcal{C}_{ad}$ is not necessary to get existence for \eqref{reduced_prop}, due to the term $\frac{\alpha}{2}\|u\|_{L^2(\Omega)}^2$ appearing in the objective.

The following proposition summarizes approximation properties of solutions of the optimal control problem \eqref{P_intro} and approximations (\hypertarget{Pfn}{$P_{f_{n}}$}) with $f_n$ close to $f$.

\begin{proposition}\label{oc_approximation}
	Consider the general optimal control problem \eqref{P_intro} and its corresponding control-to-state map $S: L^{p}(\om)\to Y\cap C^{0,a}(\overline{\om})$. Let  \hyperlink{Pfn}{$(P_{f_{n}})$} be an optimal control problem that corresponds to $f_{n}$ which is regarded to approximate $f$ in a certain sense, and denote by $S_{n}: L^{p}(\om)\to Y\cap C^{0,a}(\overline{\om})$ the corresponding approximating control-to-state map. Suppose also that the corresponding Nemytskii operators satisfy
	\begin{equation}\label{F_Fn}
	\|F_{n}(y)-F(y)\|_{L^{p}(\om)}\le \epsilon_{n}, \quad \text{ for all }y\in L^{\infty}(\om) \text{ with } \|y\|_{L^{\infty}(\om)}\le M 
	\end{equation}
	for some $M>0$ and some $\epsilon_{n} \downarrow 0$ possibly depending on $M$. Then the following hold:
	\begin{enumerate}
		\item There exists $c>0$ such that
		\begin{equation}\label{uniform_ctsm}
		\|S_{n}(u)- S(u)\|_{Y} \le c \epsilon_{n}, \quad \text{for all } u\in \mathcal{C}_{ad},
		\end{equation}
		\item For every sequence of minimizers $(u_{n})_{n\in\NN}$ of \hyperlink{Pfn}{$(P_{f_{n}})$} there exists a minimizer $u$ of  \eqref{P_intro} such that, up to a subsequence 
		\[u_{n}\rightharpoonup u\;  \text{ in }L^{p}(\om),\quad u_{n}\to u\; \text{ in }L^{2}(\om), \quad \text{ and }\quad  S_{n}(u_{n})\to  S(u)\; \text{ in } H_{0}^{1}(\om).\]
		\item 
If $u_n \to u\; \text{ in } L^p(\Omega)$, then 		
		$S_n(u_n) \to S(u) \; \text{ in }\; Y \cap C^{0,a}(\overline{\Omega})$. 
	\end{enumerate}
If $\mathcal{C}_{ad}$ is of the form $\mathcal{C}_{ad}=\{u\in L^{p}(\om):\; u_a\le u \le u_b, \text{ a.e. in }  \om\}$ for some $u_a,\, u_b\in L^{p}(\om)$, then for the above subsequence we have $u_n \to u\; \text{ in } L^p(\Omega)$ and $(iii)$ holds.
\end{proposition}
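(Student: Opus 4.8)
The plan is to establish the four assertions in sequence, since each relies on the previous ones. For (i), I would fix $u\in\mathcal{C}_{ad}$ and set $y=S(u)$, $y_n=S_n(u)$, both of which are bounded in $L^{\infty}(\om)$ uniformly over $\mathcal{C}_{ad}$ by the H\"older estimate \eqref{Holder_estimate} together with the boundedness of $\mathcal{C}_{ad}$. Writing $w_n=y_n-y$ and subtracting the two state equations gives $-\Delta w_n = F(y_n)-F_n(y_n)-(F(y_n)-F(y))$. Introducing the coefficient $\xi$ exactly as in the proof of Proposition \ref{state_existence}, which is nonnegative by the monotonicity \eqref{assumption_3} of $f$ and essentially bounded by a Lipschitz constant depending only on the uniform $L^{\infty}$ bound, this rewrites as the linear equation $-\Delta w_n+\xi w_n = F(y_n)-F_n(y_n)$. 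The right-hand side is bounded in $L^{p}(\om)$ by $\epsilon_n$ via \eqref{F_Fn}, so applying the estimate \eqref{Holder_estimate} of Proposition \ref{state_existence} to this linear problem yields $\|w_n\|_Y\le c\,\epsilon_n$ with $c$ independent of $u\in\mathcal{C}_{ad}$; the same argument simultaneously bounds $\|w_n\|_{C^{0,a}(\overline{\om})}\le c\,\epsilon_n$, which I will need for (iii).

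For (ii), I would argue by the direct method. Boundedness of $\mathcal{C}_{ad}$ gives, along a subsequence, $u_n\rightharpoonup u$ in $L^{p}(\om)$ with $u\in\mathcal{C}_{ad}$ (closedness and convexity via Mazur). Splitting $S_n(u_n)-S(u)=(S_n(u_n)-S(u_n))+(S(u_n)-S(u))$, the first term tends to zero in $H_0^1(\om)$ by (i) applied at $u_n\in\mathcal{C}_{ad}$, and the second by the weak-strong continuity of $S$ from Proposition \ref{state_existence}(i); hence $S_n(u_n)\to S(u)$ in $H_0^1(\om)$, and in particular in $L^2(\om)$. To identify $u$ as a minimizer, for arbitrary $v\in\mathcal{C}_{ad}$ I would use optimality $\mathcal{J}_n(u_n)\le\mathcal{J}_n(v)$ together with $\mathcal{J}_n(v)\to\mathcal{J}(v)$ (again by (i)) and the lower bound $\liminf_n\mathcal{J}_n(u_n)\ge\mathcal{J}(u)$, the latter combining the strong convergence of the tracking term with the weak lower semicontinuity of $\|\cdot\|_{L^2(\om)}$. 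Choosing $v=u$ then forces $\mathcal{J}_n(u_n)\to\mathcal{J}(u)$, whence $\|u_n\|_{L^2(\om)}\to\|u\|_{L^2(\om)}$; combined with $u_n\rightharpoonup u$ in $L^2(\om)$ this yields the strong convergence $u_n\to u$ in $L^2(\om)$.

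Part (iii) is then immediate from the same splitting $S_n(u_n)-S(u)=(S_n(u_n)-S(u_n))+(S(u_n)-S(u))$: the first term vanishes in $Y\cap C^{0,a}(\overline{\om})$ by the enhanced estimate from (i), and the second by the local Lipschitz continuity of $S:L^{p}(\om)\to Y\cap C^{0,a}(\overline{\om})$ from Proposition \ref{state_existence}(iii), using $u_n\to u$ in $L^{p}(\om)$. For the box-constraint case, the point is to upgrade the $L^2$ convergence of (ii) to $L^{p}$. Since $u_a\le u_n,u\le u_b$ a.e., one has the domination $|u_n-u|\le u_b-u_a\in L^{p}(\om)$, while $u_n\to u$ in $L^2(\om)$ furnishes a.e.\ convergence along a subsequence; dominated convergence then gives $u_n\to u$ in $L^{p}(\om)$, and since this holds for every subsequence of the $(ii)$-subsequence, the whole $(ii)$-subsequence converges in $L^{p}(\om)$. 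Finally (iii) applies and delivers $S_n(u_n)\to S(u)$ in $Y\cap C^{0,a}(\overline{\om})$.

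I expect the main obstacle to be part (i): the approximating nonlinearity $F_n$ need not be monotone, so the estimate cannot be obtained by testing the difference equation directly. The crucial device is to transfer the monotonicity onto the coefficient $\xi$ built from $F$ rather than $F_n$, thereby reducing matters to a linear elliptic problem to which Proposition \ref{state_existence} applies with a constant that is uniform over the bounded set $\mathcal{C}_{ad}$. The only other delicate point is securing strong---rather than merely weak---$L^2$ convergence of the minimizers in (ii), which hinges on the recovery comparison $v=u$ and on the equivalence of weak convergence plus norm convergence with strong convergence in the Hilbert space $L^2(\om)$.
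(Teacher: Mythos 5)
Your proposal is correct, and its overall skeleton (uniform $L^\infty$ bounds, exploiting monotonicity of $f$ rather than $f_n$, the splitting $S_n(u_n)-S(u)=(S_n(u_n)-S(u_n))+(S(u_n)-S(u))$, and dominated convergence for the box-constrained case) coincides with the paper's. Two steps are executed by different means. For (i), the paper tests the difference of the state equations with $y_n-y$, absorbs the monotone term, and obtains the $H_0^1$ bound first, then separately estimates $\|\Delta y_n-\Delta y\|_{L^2}$ using \eqref{F_Fn} and the local Lipschitz continuity of $F$; you instead recast the difference as the linear problem $-\Delta w_n+\xi w_n=F(y_n)-F_n(y_n)$ with the nonnegative quotient coefficient $\xi$ (the device the paper itself uses inside the proof of Proposition \ref{state_existence}) and invoke the a priori estimates for that linear equation. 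Your route is slightly more economical in that it delivers the $C^{0,a}$ bound $\|S_n(u)-S(u)\|_{C^{0,a}(\overline{\om})}\le c\epsilon_n$ simultaneously, which you then reuse in (iii); the paper's route is more self-contained at that point since it does not need to re-verify that the linearized problem falls under Proposition \ref{state_existence}. For (ii), the paper simply cites \cite[Theorem 2.3]{DonHinPap20} for the weak $L^p$ and strong $L^2$ convergence of the minimizers and only proves the convergence of the states; your direct-method argument with the recovery comparison $v=u$ and the norm-plus-weak-convergence upgrade in $L^2$ is a complete and correct substitute for that citation. The remaining parts, including the subsequence argument for the box-constraint case, match the paper's proof.
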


\begin{proof}
	$(i)$ We fix $u\in \mathcal{C}_{ad}$ and write $y_{n}:=S_{n}(u)$, $y:=S(u)$. By subtracting the two state equations, adding and subtracting $F(y_{n})$, testing with $y_{n}-y$ and using the monotonicity of $f$ we have
	\begin{align*}
	\|\nabla y_{n}-\nabla y\|_{L^{2}(\om)}^{2}&= -\int_{\om} (F_{n}(y_{n})-F(y_{n}))(y_{n}-y)\, dx - \int_{\om} (F(y_{n})-F(y))(y_{n}-y)\, dx\\
	&\le \|F_{n}(y_{n})-F(y_{n})\|_{L^{2}(\om)} \|y_{n}-y\|_{L^{2}(\om)}\\
	&\le \|F_{n}(y_{n})-F(y_{n})\|_{L^{2}(\om)} \|y_{n}-y\|_{H_{0}^{1}(\om)}.
	\end{align*}
	Using the Poincar\'e inequality, dividing the inequality above by $ \|y_{n}-y\|_{H_{0}^{1}(\om)}$  and using the fact that 
	$\|y_{n}\|_{L^{\infty}(\om)}\le c_{\infty}\|u-F_{n}(0)\|_{L^{p}(\om)}$ as well as \eqref{F_Fn}, we get the existence of  $c>0$ satisfying the an estimate analogous to \eqref{uniform_ctsm} for the $H_{0}^{1}$ norm. The required estimate for the $Y$-norm is shown by considering 
	\[\|\Delta y_{n}-\Delta y\|_{L^{2}} \le \|F_{n}(y_{n}) - F(y_{n})\|_{L^{2}(\om)} + \|F(y_{n})-F(y)\|_{L^{2}(\om)}, \]
	and taking advantage of \eqref{F_Fn} and the local Lipschitz continuity of $F$.\\
	$(ii)$ The first two convergences follow from a direct application of \cite[Theorem 2.3]{DonHinPap20}. We only need to prove the last one. By setting now $y_{n}:=S_{n}(u_{n})$ and performing a similar estimation as before, we get 
	\begin{align*}
	\|y_{n}-y\|_{H_{0}^{1}(\om)} \le c \left(\|F_{n}(y)-F(y)\|_{L^{p}(\om)} + \|u_{n}-u\|_{H^{-1}(\om)}\right)\le c(\epsilon + \|u_{n}-u\|_{H^{-1}(\om)}).
	\end{align*} 
	We then use the fact that $u_{n}\rightharpoonup u$ in $L^{p}(\om)$ implies $u_{n} \to u$ in $H^{-1}(\om)$.\\
	$(iii)$ Suppose that $u_{n}\to u$ in $L^{p}(\om)$. We need to show $y_{n}\to y$ both in $Y$ and $C^{0,a}(\overline{\om})$. For the former one it suffices to show $\Delta y_{n}\to \Delta y$ in $L^{2}(\om)$ which follows from the estimate
	\[\|\Delta y_{n} - \Delta y\|_{L^{2}(\om)} \le \|F_{n}(y_{n})-F(y_{n})\|_{L^{2}(\om)} + \|F(y_{n})-F(y)\|_{L^{2}(\om)} + \|u_{n}-u\|_{L^{2}(\om)}.\]
	Finally, for the convergence $y_{n}\to y$ in $C^{0,a}(\overline{\om})$, we  work similarly to the proof of Lipschitz continuity of $S: L^{p}(\om)\to C^{0,a}(\overline{\om})$ in Proposition \ref{state_existence}. 

Finally note that if $\mathcal{C}_{ad}$ is of the form $\mathcal{C}_{ad}=\{u\in L^{p}(\om):\; u_a\le u \le u_b, \text{ a.e. in }  \om\}$ for some	 $u_a,\, u_b\in L^{p}(\om)$, then for the same subsequence as in $(ii)$, the convergence $u_{n}\to u$ in $L^{p}(\om)$ follows from an application of the dominated convergence theorem,  as well as $u_{n}\to u$ in $L^{2}(\om)$ which implies almost everywhere convergence up to subsequence.	
\end{proof}

\begin{remark}\label{smoothing_remark}
	A few remarks about Proposition \ref{oc_approximation} are in order. The main requirement for the approximation $f_{n}$ to $f$ is to have a well-defined control-to-state map $S_{n}: L^{p}(\om)\to Y\cap C^{0,a}(\overline{\om})$ that satisfies the continuity properties  $(i)$--$(iii)$ of Proposition \ref{state_existence} as well as the approximation property \eqref{F_Fn}. The former will guarantee existence for the optimal control problem and the latter will imply the approximation results of Proposition \ref{oc_approximation}. We note that these will be satisfied for the approximating sequences $\mathcal{N}_{n}\to f$, $\mathcal{N}_{\epsilon}\to \mathcal{N}$, $f_{\epsilon}\to f$ of Propositions \ref{prop:state_N_n}, \ref{prop:state_N_eps} and Remark \ref{rem:state_f_eps}, respectively.
	
\end{remark}

\section{Optimality systems (Stationarity conditions)}
\label{sec:stationary}
In this section we study first-order optimality respectively stationarity systems for the optimal control problem under consideration. For the sake of generality, we study problem  \eqref{P_intro} when the nonlinear function $f$ satisfies Assumptions \eqref{assumption_1}--\eqref{assumption_4}. Note that this setting covers the case $f=\mathcal{N}$. In what follows we specifically focus on the (reduced) minimization problem \eqref{reduced_prop} and are interested in necessary conditions satisfied by its local minimizers. Recall here that $\overline{u}$ is a local minimizer for \eqref{reduced_prop} if there exists an $r>0$ such that for every $u\in L^{p}(\om)$ satisfying $u\in B(\overline{u},r)\cap \mathcal{C}_{ad}\subset L^{p}(\om)$ we have $\mathcal{J}(\overline{u})\le \mathcal{J}(u)$. 

Depending on whether we take a primal or primal-dual view on characterizing stationarity of a feasible control, but also in view of different prerequisites on the problem data, in the sequel we derive different types of first-order conditions for  \eqref{P_intro} respectively \eqref{reduced_prop}.
 
\textbf{B-stationarity}. We start with a primal first-order necessary optimality condition for $\overline{u}$ which relies on Fermat's principle, corresponding to the directional derivative of $\mathcal{J}$ at $\bar u$ being non-negative along any feasible direction. 

To identify these feasible directions, we define the contingent cone of $\mathcal{C}_{ad}$ at $\overline{u}\in \mathcal{C}_{ad}$ as
\begin{equation}\label{TCad}
T_{\mathcal{C}_{ad}}(\overline{u}):=\{h\in L^{p}(\om): \exists\, t_{n}\downarrow 0 \text{ and }h_{n}\to h\in L^{p}(\om), \text{ such that for all }n\in\NN, \; \overline{u}+t_{n}h_{n}\in \mathcal{C}_{ad} \}.
\end{equation}
It can be shown, see for instance \cite[Lemma 6.34]{bonnans2013perturbation}, that if $\mathcal{C}_{ad}$ is of the form 
\begin{equation}\label{Cad_box}
\mathcal{C}_{ad}=\{u\in L^{p}(\om): u_a(x)\le u(x) \le u_b(x), \text{ for almost every }x\in\Omega \}
\end{equation}
with $u_{a},u_{b}\in L^{\infty}(\om)$, $u_{a}<u_{b}$ almost everywhere, then $T_{\mathcal{C}_{ad}}(\overline{u})$ can be characterized by

\begin{equation}\label{TCad_box}
T_{\mathcal{C}_{ad}}(\overline{u})=
\Bigg \{h\in L^{p}(\om):\;\;
\begin{aligned}
h(x)\ge 0,& \;\; \text{almost everywhere in } \{x\in \om: \overline{u}(x)=u_a(x)\}\\
h(x)\le 0,& \;\; \text{almost everywhere in } \{x\in \om: \overline{u}(x)=u_b(x)\}
\end{aligned}
\Bigg\}.
\end{equation}

\begin{proposition}\label{B_stationarity}[B-stationarity]
	Suppose that $\overline{u}$ is a local minimizer for the problem \eqref{reduced_prop} and let $\overline{y}=S(\overline{u})\in C^{0,a}(\overline{\om})\cap Y$ be the associated state. Then the pair $(\overline{u}, \overline{y})$ satisfies the following variational inequality
	\begin{equation}\label{B_stationarity_VI}
	\mathcal{J}'(\overline{u};h)= \langle \overline{y}-g, S'(\overline{u};h)\rangle + \alpha \langle \overline{u},h\rangle \ge 0, \quad \text{for all } h\in T_{\mathcal{C}_{ad}}(\overline{u}).
	\end{equation}
	
\end{proposition}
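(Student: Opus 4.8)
The plan is to establish \eqref{B_stationarity_VI} by a standard Fermat-principle argument applied to difference quotients of $\mathcal{J}$, the one nonroutine ingredient being that the quotients must be evaluated along \emph{varying} directions; this is precisely where the Hadamard directional differentiability of $S$ from Proposition~\ref{cts_diff} is needed. First I would fix an arbitrary $h\in T_{\mathcal{C}_{ad}}(\overline{u})$ and, using the definition \eqref{TCad} of the contingent cone, select sequences $t_{n}\downarrow 0$ and $h_{n}\to h$ in $L^{p}(\om)$ with $\overline{u}+t_{n}h_{n}\in\mathcal{C}_{ad}$ for all $n$. Writing $u_{n}:=\overline{u}+t_{n}h_{n}$, one has $\|u_{n}-\overline{u}\|_{L^{p}(\om)}=t_{n}\|h_{n}\|_{L^{p}(\om)}\to 0$, so $u_{n}$ eventually lies in $B(\overline{u},r)\cap\mathcal{C}_{ad}$, and local optimality of $\overline{u}$ gives $\mathcal{J}(u_{n})\ge\mathcal{J}(\overline{u})$, i.e.
\[
\frac{\mathcal{J}(u_{n})-\mathcal{J}(\overline{u})}{t_{n}}\ge 0
\]
for all sufficiently large $n$.

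The core of the argument is to show that this quotient converges to the right-hand side of \eqref{B_stationarity_VI}, which I would do by splitting it into a tracking and a regularization contribution. Setting $y_{n}:=S(u_{n})$ and $\overline{y}:=S(\overline{u})$, polarization gives
\[
\frac{1}{2t_{n}}\left(\|y_{n}-g\|_{L^{2}(\om)}^{2}-\|\overline{y}-g\|_{L^{2}(\om)}^{2}\right)=\frac{1}{2}\left\langle y_{n}+\overline{y}-2g,\ \frac{y_{n}-\overline{y}}{t_{n}}\right\rangle.
\]
Here the Lipschitz continuity of $S$ from Proposition~\ref{state_existence}(i) yields $y_{n}\to\overline{y}$ in $H_{0}^{1}(\om)$ and hence in $L^{2}(\om)$, while Proposition~\ref{cts_diff}, applied with this very sequence $(t_{n},h_{n})$, gives $(y_{n}-\overline{y})/t_{n}\to S'(\overline{u};h)$ in $Y$ and therefore in $L^{2}(\om)$; the product thus converges to $\langle\overline{y}-g,S'(\overline{u};h)\rangle$. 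For the regularization term, expanding $\|\overline{u}+t_{n}h_{n}\|_{L^{2}(\om)}^{2}$ produces
\[
\frac{\alpha}{2t_{n}}\left(\|u_{n}\|_{L^{2}(\om)}^{2}-\|\overline{u}\|_{L^{2}(\om)}^{2}\right)=\alpha\langle\overline{u},h_{n}\rangle+\frac{\alpha t_{n}}{2}\|h_{n}\|_{L^{2}(\om)}^{2},
\]
and since $p\ge 2$ on the bounded domain $\om$ we have $h_{n}\to h$ in $L^{2}(\om)$, so the first summand tends to $\alpha\langle\overline{u},h\rangle$ and the second to $0$. Adding the two limits identifies $\lim_{n}\frac{\mathcal{J}(u_{n})-\mathcal{J}(\overline{u})}{t_{n}}$ with $\mathcal{J}'(\overline{u};h)=\langle\overline{y}-g,S'(\overline{u};h)\rangle+\alpha\langle\overline{u},h\rangle$, and the sign $\ge 0$ passes to the limit, giving \eqref{B_stationarity_VI}.

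The step I expect to be the genuine obstacle — and the reason the condition is stated over the contingent cone rather than over fixed admissible directions — is the passage to the limit in the tracking term: the quotient is formed along the perturbation $\overline{u}+t_{n}h_{n}$ with $h_{n}$ varying, so mere G\^ateaux directional differentiability of $S$ would not be enough. It is exactly the Hadamard directional differentiability from Proposition~\ref{cts_diff}, which allows $h_{n}\to h$ in place of $h_{n}\equiv h$, that closes the gap; the remaining manipulations are only polarization and continuity. I would finally remark that, since the computed limit does not depend on the realizing sequence $(t_{n},h_{n})$, the functional $\mathcal{J}$ is itself Hadamard directionally differentiable at $\overline{u}$ with derivative $\mathcal{J}'(\overline{u};h)$ as displayed, which justifies the notation in the statement.
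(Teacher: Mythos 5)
Your proof is correct, but it follows a more elementary route than the paper's. The paper rewrites \eqref{reduced_prop} as the unconstrained problem of minimizing $V=\mathcal{J}+\mathcal{X}_{\mathcal{C}_{ad}}$, invokes the Fermat rule for contingent epiderivatives ($D_{\uparrow}V(\overline{u})(h)\ge 0$), splits the $\liminf$, computes the epiderivative of the indicator function to restrict attention to $h\in T_{\mathcal{C}_{ad}}(\overline{u})$, and finally identifies $\mathcal{J}'(\overline{u};h)$ via the chain rule for Hadamard directionally differentiable maps. You instead work directly with a realizing sequence $(t_{n},h_{n})$ from the definition of the contingent cone, obtain the nonnegative difference quotients from local optimality, and compute their limit by hand through polarization and the convergence $(S(\overline{u}+t_{n}h_{n})-S(\overline{u}))/t_{n}\to S'(\overline{u};h)$ in $Y$ from Proposition~\ref{cts_diff}. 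The two arguments have the same mathematical core --- in both, the Hadamard (rather than merely G\^ateaux) directional differentiability of $S$ is exactly what permits the varying directions $h_{n}\to h$, a point you correctly flag as the crux --- but your version is self-contained and avoids both the set-valued-analysis machinery and the minor bookkeeping the paper needs when splitting the $\liminf$ of a sum (where one term may be $+\infty$); what the paper's abstract formulation buys is a template that generalizes immediately to other nonsmooth additive structures of the objective. All individual steps of your computation (the embedding $L^{p}\hookrightarrow L^{2}$ for $h_{n}\to h$ in $L^{2}$, the boundedness of $\|h_{n}\|_{L^{2}}$, the strong $L^{2}$-convergence of both factors in the polarized tracking term) check out.
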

\begin{proof}
The proof closely follows  \cite[Theorem 1]{Hint_Mord_Suro}.
	Note that the minimization problem \eqref{reduced_prop} can be equivalently written as
	\begin{equation}\label{reduced_prop_equi}
	\min_{u\in L^{p}(\om)} V(u):=\mathcal{J}(u) + \mathcal{X}_{\mathcal{C}_{ad}}(u),
	\end{equation}
	where $\mathcal{X}_{\mathcal{C}_{ad}}$ denotes the indicator function of $\mathcal{C}_{ad}$. Since $\overline{u}$ is a local minimizer of \eqref{reduced_prop_equi}, we have the following Fermat rule, see \cite[Theorem 6.1.9]{aubin2009set}: 
	\begin{equation}\label{Dup_fermat}
	D_{\uparrow} V(\overline{u})(h) \ge 0, \quad \text{for all } h\in L^{p}(\om),
	\end{equation}
	where for a $u\in \mathrm{dom}(V)$, $D_{\uparrow} V(u)(h)$ denotes the contingent epiderivative of $V$ at $u$ in direction $h\in L^{p}(\om)$, given by
	\begin{equation}\label{Dup_def}
	D_{\uparrow} V(u)(h) =\liminf_{\substack{t\downarrow 0 \\ h'\to h}} \frac{V(u+th')- V(u)}{t}.
	\end{equation}
	Hence from \eqref{reduced_prop_equi}, \eqref{Dup_fermat} and \eqref{Dup_def} we have
	\begin{equation}\label{Dup_VI}
	\liminf_{\substack{t\downarrow 0 \\ h'\to h}}  
	\left(
	\frac{\mathcal{J}(\overline{u}+th')-\mathcal{J}(\overline{u})}{t}
	+ \frac{\mathcal{X}_{\mathcal{C}_{ad}}(\overline{u}+th')-\mathcal{X}_{\mathcal{C}_{ad}}(\overline{u})}{t}
	\right)\ge 0 \quad \text{for all } h\in L^{p}(\om).
	\end{equation}
	Note that for all $h\in L^{p}(\om)$ we have
	\begin{equation}\label{Dup_VI_break}
	\begin{aligned}
	\liminf_{\substack{t\downarrow 0 \\ h'\to h}}  
	\left(
	\frac{\mathcal{J}(\overline{u}+th')-\mathcal{J}(\overline{u})}{t}
	+ \frac{\mathcal{X}_{\mathcal{C}_{ad}}(\overline{u}+th')-\mathcal{X}_{\mathcal{C}_{ad}}(\overline{u})}{t}
	\right)&\ge
	\liminf_{\substack{t\downarrow 0 \\ h'\to h}}  
	\frac{\mathcal{J}(\overline{u}+th')-\mathcal{J}(\overline{u})}{t}\\
	&\;\;+
	\liminf_{\substack{t\downarrow 0 \\ h'\to h}}  
	\frac{\mathcal{X}_{\mathcal{C}_{ad}}(\overline{u}+th')-\mathcal{X}_{\mathcal{C}_{ad}}(\overline{u})}{t}.
	\end{aligned}
	\end{equation}
	Furthermore, it holds that
	\begin{equation}\label{Dup_XCad}
	\liminf_{\substack{t\downarrow 0 \\ h'\to h}}  
	\frac{\mathcal{X}_{\mathcal{C}_{ad}}(\overline{u}+th')-\mathcal{X}_{\mathcal{C}_{ad}}(\overline{u})}{t}=
	\liminf_{\substack{t\downarrow 0 \\ h'\to h}}  
	\frac{\mathcal{X}_{\mathcal{C}_{ad}}(\overline{u}+th')}{t}=
	\begin{cases}
	0, & \text{ if }h\in T_{\mathcal{C}_{ad}}(\overline{u})\\
	\infty, & \text{ otherwise. }
	\end{cases}
	\end{equation}
	Thus, given that $\mathcal{J}'(\overline{u};h)<\infty$ for every $h\in L^{p}(\om)$, we have that the expression  \eqref{Dup_VI} is equivalent to the corresponding one, where $h\in T_{\mathcal{C}_{ad}}(\overline{u})$ only. For those $h$ we have
	\begin{equation}
	\liminf_{\substack{t\downarrow 0 \\ h'\to h}}  
	\left(
	\frac{\mathcal{J}(\overline{u}+th')-\mathcal{J}(\overline{u})}{t}
	+ \frac{\mathcal{X}_{\mathcal{C}_{ad}}(\overline{u}+th')-\mathcal{X}_{\mathcal{C}_{ad}}(\overline{u})}{t}
	\right)
	=\liminf_{\substack{t\downarrow 0 \\ h'\to h}}  \frac{\mathcal{J}(\overline{u}+th')-\mathcal{J}(\overline{u})}{t}
	= \mathcal{J}'(\overline{u};h),
	\end{equation}
	and thus \eqref{Dup_VI} is equivalent to
	\begin{equation}\label{Jprime_VI}
	\mathcal{J}'(\overline{u};h)\ge 0 \quad \text{for all } h\in T_{\mathcal{C}_{ad}}(\overline{u}).
	\end{equation}
	By using the chain rule for Hadamard directional differentiability \cite[Proposition 2.47]{bonnans2013perturbation}, we have
	\begin{align*}
	\mathcal{J}'(\overline{u};h)
	&=\partial_{y} J(\overline{y},\overline{u}) S'(\overline{u};h) +\partial_{u} J(\overline{y},\overline{u})h \\
	&=\langle \overline{y}-g, S'(\overline{u};h) \rangle + \alpha \langle \overline{u}, h\rangle,
	\end{align*}
	and the proof is complete.
\end{proof}

We will say that a point $\overline{u}\in \mathcal{C}_{ad}$ with corresponding state $\overline{y}$ is \emph{$B$-stationary} if it satisfies \eqref{B_stationarity_VI}.\\

\textbf{Optimality conditions via regularization}. Next, we are interested in  optimality conditions that are derived  as a limit of the optimality conditions of smooth problems, see the discussion at the end of Section \ref{sec:ReLU_PDE}. Since this approach is not new \cite{barbu1984optimal, christof, mignot_puel}, we will rather sketch the proof of the following proposition and afterwards discuss  some more special cases in more detail.

\begin{proposition}\label{weak_stationarity}[Weak stationarity]
	Suppose that $\overline{u}$ is a local minimizer for the problem \eqref{reduced_prop} with $\overline{y}=S(\overline{u})\in Y\cap C^{0,a}(\overline{\om})$ the associated state. Then there exists a nonnegative $\zeta\in L^{\infty}(\om)$ and an adjoint state $p\in Y$ such that the following hold
\begin{align}
-\Delta p +\zeta p&=\overline{y}-g,\label{weak_station_1}\\
\langle p+\alpha \overline{u}, u-\overline{u} \rangle & \ge 0\quad \text{for all }u\in \mathcal{C}_{ad}. \label{weak_station_2}
\end{align}	
If $\mathcal{C}_{ad}$ is given by \eqref{Cad_box}, then \eqref{weak_station_2} can be equivalently formulated as follows: There exists a multiplier $\mu=\mu_{b}-\mu_{a}$ with $\mu_{a}, \mu_{b}\in L^{2}(\om)$ such that $\overline{u}$ and $p$ satisfy
\begin{align}
p+\alpha \overline{u}+\mu=0, \label{mu1}\\
u_{a}\le \overline{u} \le u_{b}, \;\; \mu_{a}\ge 0, \;\;\mu_{b}\ge 0, \label{mu2}\\
\mu_{a}(u_{a}-\overline{u})=\mu_{b}(\overline{u}-u_{b})=0, \label{mu3}
\end{align}
almost everywhere in $\om$.
\end{proposition}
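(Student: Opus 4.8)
The plan is to follow the classical regularization (adapted penalization) approach of Barbu used in \cite{barbu1984optimal, christof, mignot_puel}: replace the nonsmooth $f$ by the smooth mollification $f_{\epsilon}$ of Remark \ref{rem:state_f_eps}, for which the control-to-state map $S_{\epsilon}$ of \eqref{state_f_eps} is G\^ateaux differentiable, derive a classical optimality system for a suitably penalized smooth problem, and then pass to the limit $\epsilon\to 0$. Fixing the radius $r>0$ from the definition of the local minimizer $\overline{u}$, I would introduce the regularized problems
\[
\min_{u\in \mathcal{C}_{ad},\ \|u-\overline{u}\|_{L^{p}(\om)}\le r}\ \mathcal{J}_{\epsilon}(u)+\tfrac{1}{2}\|u-\overline{u}\|_{L^{2}(\om)}^{2},
\qquad \mathcal{J}_{\epsilon}(u):=\tfrac12\|S_{\epsilon}(u)-g\|_{L^{2}(\om)}^{2}+\tfrac{\alpha}{2}\|u\|_{L^{2}(\om)}^{2}.
\]
Existence of a minimizer $\overline{u}_{\epsilon}$ follows as in Proposition \ref{oc_existence}, since $S_{\epsilon}$ inherits the continuity properties of Proposition \ref{state_existence} uniformly in $\epsilon$ (Remark \ref{rem:state_f_eps}). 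The quadratic penalty together with the localization to the ball forces $\overline{u}_{\epsilon}\to\overline{u}$: comparing $\mathcal{J}_{\epsilon}(\overline{u}_{\epsilon})+\tfrac12\|\overline{u}_{\epsilon}-\overline{u}\|^{2}\le\mathcal{J}_{\epsilon}(\overline{u})$, extracting a weak limit $\overline{u}_{\epsilon}\rightharpoonup\tilde u\in\mathcal{C}_{ad}$, and using the uniform convergence $\mathcal{J}_{\epsilon}\to\mathcal{J}$ on $\mathcal{C}_{ad}$ (Proposition \ref{oc_approximation}$(i)$), weak lower semicontinuity, and the local optimality of $\overline{u}$, one obtains $\tilde u=\overline{u}$ and, by the standard squeezing of the penalty term, strong convergence $\overline{u}_{\epsilon}\to\overline{u}$ in $L^{2}(\om)$; in particular the ball constraint is inactive for small $\epsilon$.

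For each small $\epsilon$ the penalized objective is G\^ateaux differentiable, so $\overline{u}_{\epsilon}$ satisfies $\langle \mathcal{J}_{\epsilon}'(\overline{u}_{\epsilon})+(\overline{u}_{\epsilon}-\overline{u}),\, u-\overline{u}_{\epsilon}\rangle\ge 0$ for all $u\in\mathcal{C}_{ad}$. Introducing the adjoint state $p_{\epsilon}\in Y$ as the unique solution of $-\Delta p_{\epsilon}+\zeta_{\epsilon}p_{\epsilon}=\overline{y}_{\epsilon}-g$, where $\zeta_{\epsilon}:=\partial_{y}f_{\epsilon}(\cdot,\overline{y}_{\epsilon})\ge 0$ and $\overline{y}_{\epsilon}=S_{\epsilon}(\overline{u}_{\epsilon})$ (well posed by Lax--Milgram since $\zeta_{\epsilon}\ge 0$), a testing argument against the linearized equation \eqref{adjoint_eps} and the symmetry of the multiplication operator $\zeta_{\epsilon}$ identify $\mathcal{J}_{\epsilon}'(\overline{u}_{\epsilon})=p_{\epsilon}+\alpha\overline{u}_{\epsilon}$. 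Hence the variational inequality reads $\langle p_{\epsilon}+\alpha\overline{u}_{\epsilon}+(\overline{u}_{\epsilon}-\overline{u}),\, u-\overline{u}_{\epsilon}\rangle\ge 0$ for all $u\in\mathcal{C}_{ad}$.

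The limit passage is where the real work lies. The multipliers $\zeta_{\epsilon}$ are uniformly bounded in $L^{\infty}(\om)$ by the uniform Lipschitz bound \eqref{f_eps_uni_Lip} and the uniform $L^{\infty}$ bound on $\overline{y}_{\epsilon}$, so $\zeta_{\epsilon}\rightharpoonup^{*}\zeta\ge 0$ along a subsequence; testing the adjoint equation with $p_{\epsilon}$ and discarding the nonnegative term $\int_{\om}\zeta_{\epsilon}p_{\epsilon}^{2}$ gives a uniform $Y$-bound on $p_{\epsilon}$, whence $p_{\epsilon}\rightharpoonup p$ in $Y$ and, by the compact embedding of $Y$ into $H_{0}^{1}(\om)$, $p_{\epsilon}\to p$ strongly in $H_{0}^{1}(\om)$ and in $L^{2}(\om)$. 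The decisive point is the convergence of the product $\zeta_{\epsilon}p_{\epsilon}\rightharpoonup\zeta p$ in $L^{2}(\om)$: for any $\phi\in L^{2}(\om)$ one has $p_{\epsilon}\phi\to p\phi$ in $L^{1}(\om)$, which pairs with $\zeta_{\epsilon}\rightharpoonup^{*}\zeta$ in $L^{\infty}(\om)=(L^{1}(\om))^{*}$. Passing to the limit in the adjoint PDE then yields \eqref{weak_station_1}, while the strong convergences $\overline{u}_{\epsilon}\to\overline{u}$ and $p_{\epsilon}\to p$ in $L^{2}(\om)$ (the penalty term vanishing) yield \eqref{weak_station_2}. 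I expect this product convergence -- combining weak-$*$ compactness of the nonnegative multipliers with the strong compactness afforded by $Y\subset\subset H_{0}^{1}(\om)$ -- to be the main obstacle.

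Finally, under the box structure \eqref{Cad_box} I would set $\mu:=-(p+\alpha\overline{u})$, so that \eqref{mu1} holds by definition and \eqref{weak_station_2} states precisely that $\mu$ lies in the normal cone of $\mathcal{C}_{ad}$ at $\overline{u}$. A pointwise analysis of this normal cone for the box gives $\mu\le 0$ a.e.\ on $\{\overline{u}=u_{a}\}$, $\mu\ge 0$ a.e.\ on $\{\overline{u}=u_{b}\}$, and $\mu=0$ elsewhere; defining $\mu_{a}:=\max(-\mu,0)$ and $\mu_{b}:=\max(\mu,0)$ then delivers the nonnegativity \eqref{mu2} and the complementarity \eqref{mu3}, completing the proof.
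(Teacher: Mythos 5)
Your proposal follows essentially the same route as the paper: mollify $f$ as in Remark \ref{rem:state_f_eps}, solve an adapted-penalization surrogate problem whose minimizers converge to $\overline{u}$, write the smooth adjoint system with $\zeta_{\epsilon}=\partial_{y}f_{\epsilon}(\cdot,\overline{y}_{\epsilon})\ge 0$, and pass to the limit using $\zeta_{\epsilon}\overset{*}{\rightharpoonup}\zeta$ in $L^{\infty}$ together with $p_{\epsilon}\to p$ strongly in $L^{2}$ via the compact embedding $Y\hookrightarrow H^{1}_{0}(\om)$; the box-constraint reformulation via the normal cone is the same content as the paper's citation of Tr\"oltzsch. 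One point to fix: you penalize with $\tfrac12\|u-\overline{u}\|_{L^{2}}^{2}$ but localize with an $L^{p}$-ball, and then claim the ball constraint is eventually inactive because $\overline{u}_{\epsilon}\to\overline{u}$ in $L^{2}$. For $p>2$ (needed when $d\ge 4$) strong $L^{2}$ convergence on a set bounded in $L^{p}$ does not give $L^{p}$ convergence, so inactivity is not justified as stated. The paper avoids this by penalizing with $\tfrac1p\|u-\overline{u}\|_{L^{p}(\om)}^{p}$, which forces $u_{\epsilon}\to\overline{u}$ in $L^{p}$; alternatively, you can drop the inactivity claim entirely, pass to the limit in the variational inequality restricted to $\mathcal{C}_{ad}\cap B_{L^{p}}(\overline{u},r)$, and recover \eqref{weak_station_2} on all of $\mathcal{C}_{ad}$ by convexity and positive homogeneity, since $\overline{u}$ is the center of the ball.
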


\begin{proof}
We will state the proof in several steps:

\emph{Step 1:} Consider \eqref{state_f_eps}, the regularized state equation that corresponds to the smoothing of $f$ discussed in Remark \ref{rem:state_f_eps}. Then by slightly generalizing the standard arguments of \cite[Section 4]{christof}, one can show that there exist $\epsilon>0$ and local minimizers of $(\tilde{P}_{f_{\epsilon}})$ such that $u_{\epsilon}\to \overline{u}$ in $L^{p}(\om)$ as $\epsilon\to 0$, where $(\tilde{P}_{f_{\epsilon}})$ is the following modified regularized problem 

\begin{equation}\label{tP_f_eps}\tag{$\tilde{P}_{f_{\epsilon}}$}
\begin{aligned}
&\underset{ (y,u)\in H_{0}^{1}(\om) \times L^{2}(\om)}{\text{minimize }}\quad \tilde{J}(y,u):= \frac{1}{2} \|y-g\|_{L^{2}(\Omega)}^{2} + \frac{\alpha}{2} \|u\|_{L^{2}(\om)}^{2}+\frac{1}{p} \|u-\overline{u}\|_{L^{p}(\om)}^{p},\\
&\text{subject to }\left \{
\begin{aligned}
-\Delta y + F_{\epsilon}(y)&=u, \;\; \text{ in }\Omega,\\
y&=0, \;\; \text{ on }\partial \Omega,
\end{aligned}
\right. \quad \text{ and }\quad  u\in \mathcal{C}_{ad}.
\end{aligned}
\end{equation}

From Proposition \ref{oc_approximation} we have that $y_{\epsilon}=S_{\epsilon}(u_{\epsilon})\to S(\overline{u})=y$ in $Y\cap C^{0,a}(\overline{\om})$ and from the Lipschitz continuity of $S_{\epsilon}: L^{p}(\om)\to Y\cap  C^{0,a}(\overline{\om})$ on bounded sets of $L^{p}(\om)$ and with Lipschitz constant independent of $\epsilon$ we get that $\|y_{\epsilon}\|_{L^{\infty}(\om)}\le c$ uniformly in $\epsilon$ for some constant $c>0$. Combining this with the fact that $F_{\epsilon}'(y)(x)=\partial_{y}f_{\epsilon}(x,y(x))$ is a bounded operator from $L^{\infty}(\om)$ to itself due to the uniform Lipschitz constant of $f_{\epsilon}$ we get that
\begin{equation}
\|F_{\epsilon}'(y_{\epsilon})\|_{L^{\infty}(\om)}\le c,
\end{equation}
with a constant $c>0$ independent of $\epsilon$. Hence, along a subsequence, $F_{\epsilon}'(y_{\epsilon})\overset{\ast}{\rightharpoonup} \zeta$ weakly* in $L^{\infty}(\om)$ for some $\zeta$ with $\|\zeta\|_{L^{\infty}(\om)}\le c$.

\emph{Step 2:} It follows from standard arguments \cite{Tro10}, that for every local minimizer $u_{\epsilon}$ ($\ne\bar u$) of \eqref{tP_f_eps} there exists a unique adjoint state $p_{\epsilon}\in Y$ such that
\begin{align}
-\Delta p_{\epsilon} + F_{\epsilon}'(y_{\epsilon}) p_{\epsilon} & = y_{\epsilon}-g,\label{smooth_Adj}\\
\langle p_{\epsilon} +\alpha u_{\epsilon}, u-u_{\epsilon} \rangle +
\langle |u_{\epsilon}-\overline{u}|^{p-2} (u_{\epsilon}-\overline{u}), u-u_{\epsilon} \rangle&\ge 0 \quad \text{for all }u\in \mathcal{C}_{ad}. \label{smooth_vi}
\end{align}

\emph{Step 3:} We now take the limit as $\epsilon\to 0 $ in \eqref{smooth_Adj}--\eqref{smooth_vi} using the limiting behaviours of $u_{\epsilon}$, $y_{\epsilon}$ and $F_{\epsilon}'(y_{\epsilon})$. Note first that by testing \eqref{smooth_Adj} with $p_{\epsilon}$, employing the Poincar\'e inequality  and using that $y_{\epsilon}\to y$ in $L^{2}(\om)$, we can easily show that $p_{\epsilon}$ is bounded in $Y$ and thus $p_{\epsilon} \rightharpoonup p$ in $Y$ and $p_{\epsilon} \to p$ in $H_{0}^{1}(\om)$ for some $p\in Y$ along a subsequence. Since $F_{\epsilon}'(y_{\epsilon})\overset{\ast}{\rightharpoonup} \zeta$ in $L^{\infty}(\om)$ and $p_{\epsilon} \to p$ in $L^{2}(\om)$ we deduce $F_{\epsilon}'(y_{\epsilon}) p_{\epsilon} \rightharpoonup \zeta p$ in $L^{2}(\om)$. Since $p_{\epsilon} \rightharpoonup p$ in $Y$ we have $\Delta p_{\epsilon} \rightharpoonup \Delta p$ in $L^{2}(\om)$ and since weak convergence in $L^{2}(\om)$ implies strong convergence in $H^{-1}(\om)$, by taking the limit $\epsilon\to 0 $ in \eqref{smooth_Adj} we have that $p$ satisfies \eqref{weak_station_1}. Finally, we take the limits in \eqref{smooth_vi}. We can straightforwardly estimate that $\||u_{\epsilon}-\overline{u}|^{p-2} (u_{\epsilon}-\overline{u})\|_{L^{q}(\om)}^{q}\to 0$ as $\epsilon \to 0$ where $q=p/(p-1)$. Using this  as well as $p_{\epsilon}\to p$ in $L^{2}(\om)$ and $u_{\epsilon} \to \overline{u}$ in $L^{p}(\om)$, we pass to the limit in \eqref{smooth_vi} and obtain \eqref{weak_station_2}.

Finally we refer to \cite[Theorem 2.29]{Tro10} for a proof of equivalence of \eqref{weak_station_2} and the conditions \eqref{mu1}--\eqref{mu3} in the case of a box constraint set.

\end{proof}

\textbf{C-stationarity}. In general we say that a $\overline{u}\in \mathcal{C}_{ad}$ with state $\overline{y}$ is \emph{weakly stationary} if there exist a non-negative $\zeta\in L^{\infty}(\om)$ and an adjoint state $p\in Y$ such that \eqref{weak_station_1}--\eqref{weak_station_2} are satisfied. Proposition \ref{weak_stationarity} states that every local mimimizer of \eqref{reduced_prop} is weakly stationary.
Note, however, that this weak stationarity condition \eqref{weak_station_1}--\eqref{weak_station_2} does not provide any information about the dual variable $\zeta$. In this context, we are in particular interested in conditions that guarantee 
\begin{equation}\label{C-stationary}\tag{$C$}
\zeta(x)\in \partial f(x, \overline{y}(x)) \quad\text{for almost every }x\in\om,
\end{equation}
where the subdifferential operates on the $y$-component of $f$.
This gives rise to the following stationarity notion: A point $\overline{u}\in \mathcal{C}_{ad}$ with state $\overline{y}$ is called \emph{$C$-stationary} if it is weakly stationary and \eqref{C-stationary} holds in addition. Letting $f_{x}:=f(x,\cdot):\mathbb{R}\to \mathbb{R}$, we write $\partial f(x, y)=:\partial f_{x}(y)$ where the latter represents Clarke's generalized gradient of $f_x$ at $y$. We recall here that for a locally Lipschitz $\phi:\mathbb{R} \to \mathbb{R}$, the Clarke generalized gradient at a point $y$ can be characterized as
\begin{equation}\label{clarke_conv}
\partial \phi(y)=\mathrm{conv} \left ( \{   \xi\in \mathbb{R}: \exists\, y_{n} \text{ with $\phi$ differentiable at $y_{n}$, such that } y_{n}\to y \text{ and } \phi'(y_{n})\to \xi \}\right ); 
\end{equation}
see \cite{Clarke} for a general definition.
In order to show $C$-stationarity for a local minimizer $\overline{u}$ of \eqref{reduced_prop}, one needs to guarantee that the weak$^\ast$ limit $\zeta$ of $F_{\epsilon}'(y_{\epsilon})$ of the regularized nonlinearity at $y_{\epsilon}$ is (in a pointwise sense) an element of the generalized gradient $\partial f(x, \overline{y}(x))$. In the recent preprint \cite{Betz} a similar problem is considered where this difficulty is remedied by assuming that the nonlinearity $f$ is either convex or concave locally around $y_{0}\in \mathbb{R}$ whenever $f$ is nondifferentiable at $y_{0}$ and the set $\{x\in \om: \overline{y}(x)=y_{0}\}$ is of positive measure. This, however, excludes a variety of other nonsmooth monotone functions which might, for instance, be convex on an interval left of $y_{0}$ and concave on an interval on the right. In order to include more general cases, here we will make use of alternative condition, namely piecewise continuous differentiability. 

\begin{assumption}[$PC^{1}(\mathbb{R})$]\label{assum_pc1}
We suppose that apart from assumptions \eqref{assumption_1}--\eqref{assumption_4}, for almost every $x\in\om$ it holds that $f(x,\cdot)$ is piecewise continuously differentiable, and we write $f(x,\cdot)\in PC^{1}(\mathbb{R})$. This means that there exist finitely many points $-\infty<y_{1}<\ldots < y_{k}<+\infty$ such that $f(x,\cdot)$ is continuously differentiable on each of the intervals $(-\infty, y_{1}]$, $[y_{k},\infty)$ and $[y_{j-1}, y_{j}]$ for all $j\in \{2, \ldots k\}$, where the derivatives at $y_{j}$ are understood as one-sided derivatives.
\end{assumption}

\begin{remark}\label{relu_NN_pc1}
It follows immediately that every ReLU neural network $\mathcal{N}: \mathbb{R}^{d}\times \mathbb{R}\to \mathbb{R}$ satisfying \eqref{assumption_1}--\eqref{assumption_4}, that is, every ReLU neural network which is restricted to $\om\times \mathbb{R}$ and is monotone increasing in the second variable also satisfies Assumption \ref{assum_pc1},   since for every $x\in \mathbb{R}^{d}$, $\mathcal{N}(x,\cdot)$ is continuous and  piecewise affine. 
\end{remark}

Note that in view of \eqref{clarke_conv}, it is easy to characterize the Clarke generarized gradient of a $PC^{1}$ function $\phi:\mathbb{R}\to \mathbb{R}$. Indeed,  denoting by $\phi'(\overline{y},h)$ the directional derivative at $\overline{y}$ in direction $h$, and $\phi_{\pm}'(\overline{y}):=\pm \phi'(\overline{y},\pm 1)$ the left- and right-sided derivatives, respectively, we have
\begin{equation*}
\phi_{-}'(\overline{y})=\lim_{y\nearrow \overline{y}} \phi'(y) \quad \text{ and }\quad \phi_{+}'(\overline{y})=\lim_{y\searrow \overline{y}} \phi'(y). 
\end{equation*}
Then, defining $\underline{\partial} \phi=\min \{\phi_{-}', \phi_{+}'\}$ and $\overline{\partial} \phi=\max \{\phi_{-}', \phi_{+}'\}$, it can be checked that $\partial \phi=[\underline{\partial}\phi, \overline{\partial}\phi]$.

Before we proceed towards $C$-stationarity, we need the following lemma.
\begin{lemma}\label{PC1_lemma}
Let $\phi\in PC^{1}(\mathbb{R})$ and let $\phi_{n}:=\rho_{n}\ast \phi$ be the standard mollification of $\phi$ with $\rho_{n}:=\rho_{\epsilon_{n}}$, $\epsilon_{n}\searrow 0$. Suppose that $(y_{n})_{n\in\NN}$ is a sequence in $\mathbb{R}$ such that $y_{n}\to \overline{y}$ for some $\overline{y}\in \mathbb{R}$. Then it holds that
\[\limsup_{n\to\infty} \phi_{n}'(y_{n}) \le \overline{\partial} \phi(\overline{y})
\quad \text{ and } \quad
\liminf_{n\to\infty} \phi_{n}'(y_{n}) \ge \underline{\partial} \phi(\overline{y}).
\]
\end{lemma}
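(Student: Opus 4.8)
The plan is to exploit the fact that differentiation commutes with convolution, so that $\phi_n'=\rho_n\ast\phi'$ and the value $\phi_n'(y_n)$ is nothing but a weighted average of $\phi'$ over the shrinking interval $[y_n-\epsilon_n,y_n+\epsilon_n]$; the one-sided limits of $\phi'$ at $\overline{y}$ then control this average from above and below. First I would record that, since $\phi$ is locally Lipschitz with a.e.\ (equivalently, weak) derivative $\phi'$ piecewise continuous, integrating by parts against the compactly supported mollifier yields $\phi_n'(y)=(\rho_n\ast\phi')(y)=\int_{\mathbb{R}}\rho_n(y-z)\,\phi'(z)\,dz$. Because $\rho_n\ge 0$, $\int_{\mathbb{R}}\rho_n=1$, and $\supp\rho_n\subset[-\epsilon_n,\epsilon_n]$, the number $\phi_n'(y_n)$ is a convex combination of the values $\set{\phi'(z):z\in[y_n-\epsilon_n,y_n+\epsilon_n]}$.

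Next I would fix $\delta>0$ and reduce the estimate to the behaviour of $\phi'$ on a one-sided neighbourhood of $\overline{y}$. Using the existence of the one-sided limits $\phi_-'(\overline{y})=\lim_{y\nearrow\overline{y}}\phi'(y)$ and $\phi_+'(\overline{y})=\lim_{y\searrow\overline{y}}\phi'(y)$, together with the fact that $\phi\in PC^1(\mathbb{R})$ has only finitely many breakpoints (Assumption \ref{assum_pc1}), I would select $r>0$ so small that $(\overline{y}-r,\overline{y}+r)$ contains no breakpoint of $\phi$ other than possibly $\overline{y}$ itself, and so small that $\phi'(z)\le\phi_-'(\overline{y})+\delta\le\overline{\partial}\phi(\overline{y})+\delta$ for every $z\in(\overline{y}-r,\overline{y})$ and $\phi'(z)\le\phi_+'(\overline{y})+\delta\le\overline{\partial}\phi(\overline{y})+\delta$ for every $z\in(\overline{y},\overline{y}+r)$. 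Consequently $\phi'(z)\le\overline{\partial}\phi(\overline{y})+\delta$ for almost every $z\in(\overline{y}-r,\overline{y}+r)$, the single point $\overline{y}$ being Lebesgue-negligible.

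To conclude, I would invoke $y_n\to\overline{y}$ and $\epsilon_n\searrow 0$ to guarantee that, for all sufficiently large $n$, the support satisfies $[y_n-\epsilon_n,y_n+\epsilon_n]\subset(\overline{y}-r,\overline{y}+r)$; the averaging representation then gives $\phi_n'(y_n)\le\overline{\partial}\phi(\overline{y})+\delta$. Taking $\limsup_{n\to\infty}$ and then letting $\delta\downarrow 0$ yields the first inequality. The bound $\liminf_{n\to\infty}\phi_n'(y_n)\ge\underline{\partial}\phi(\overline{y})$ follows from the symmetric argument, replacing upper bounds by lower bounds, $\phi_\pm'(\overline{y})+\delta$ by $\phi_\pm'(\overline{y})-\delta$, and $\overline{\partial}$ by $\underline{\partial}$.

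The only point requiring care — and the main, rather mild, obstacle — is the localization step: one must ensure that the shrinking support of the mollifier eventually avoids every breakpoint of $\phi$ except $\overline{y}$, so that on each side of $\overline{y}$ the derivative $\phi'$ is genuinely close to the corresponding one-sided limit. The finiteness of the breakpoint set in Assumption \ref{assum_pc1} is precisely what makes this choice of $r$ possible and is what the $PC^1$ hypothesis buys us over a merely directionally differentiable $f$.
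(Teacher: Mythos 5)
Your proposal is correct and follows essentially the same route as the paper's proof: both write $\phi_n'(y_n)=(\rho_n\ast\phi')(y_n)$ as a weighted average of $\phi'$ over the shrinking mollifier support, use the one-sided continuity of $\phi'$ at $\overline{y}$ guaranteed by the $PC^1$ assumption to bound $\phi'$ by $\phi_\pm'(\overline{y})\pm\delta$ on each side of $\overline{y}$ (discarding the Lebesgue-null point $\overline{y}$ itself), and then localize the support for large $n$. The only cosmetic difference is that the paper treats the differentiable and nondifferentiable cases of $\overline{y}$ separately, whereas you handle them uniformly.
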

\begin{proof}
See Appendix \ref{sec:app.proof}.
\end{proof}

\begin{proposition}\label{prop:Cstationarity}
Let $f$ satisfy Assumption \ref{assum_pc1}. If $\overline{u}$ is a local minimizer for the for the problem \eqref{reduced_prop} with associated state $\overline{y}$, then $\overline{u}$ is $C$-stationary.
\end{proposition}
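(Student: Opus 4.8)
The plan is to show that the dual variable $\zeta$ already produced by the weak stationarity result lies pointwise in Clarke's generalized gradient, so that no new regularization scheme is needed beyond the one in Proposition \ref{weak_stationarity}. Recall that every local minimizer $\overline{u}$ is weakly stationary, and that the nonnegative multiplier $\zeta\in L^{\infty}(\om)$ appearing in \eqref{weak_station_1} is obtained as the weak$^\ast$ limit in $L^{\infty}(\om)$ of $F_{\epsilon}'(y_{\epsilon})$, where $y_{\epsilon}=S_{\epsilon}(u_{\epsilon})$ solves the mollified state equation \eqref{state_f_eps} and $F_{\epsilon}'(y_{\epsilon})(x)=\partial_{y} f_{\epsilon}(x,y_{\epsilon}(x))=(\rho_{\epsilon}\ast f_{x})'(y_{\epsilon}(x))$. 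Under Assumption \ref{assum_pc1} we have $\partial f(x,\overline{y}(x))=[\underline{\partial} f_{x}(\overline{y}(x)),\overline{\partial} f_{x}(\overline{y}(x))]$, so it suffices to establish the two-sided pointwise bound $\underline{\partial} f_{x}(\overline{y}(x))\le \zeta(x)\le \overline{\partial} f_{x}(\overline{y}(x))$ for almost every $x\in\om$; this is exactly \eqref{C-stationary}, and combined with weak stationarity it yields $C$-stationarity.

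First I would record the convergences available along the regularization. From Step~1 of the proof of Proposition \ref{weak_stationarity} we have $u_{\epsilon}\to\overline{u}$ in $L^{p}(\om)$ and, via Proposition \ref{oc_approximation}, $y_{\epsilon}\to\overline{y}$ in $C^{0,a}(\overline{\om})$; in particular $y_{\epsilon}(x)\to\overline{y}(x)$ for every $x\in\om$. Applying Lemma \ref{PC1_lemma} pointwise in $x$, with $\phi=f_{x}\in PC^{1}(\mathbb{R})$, $\phi_{n}=\rho_{\epsilon_{n}}\ast f_{x}$ and $y_{n}=y_{\epsilon_{n}}(x)$, gives for almost every $x\in\om$
\[
\limsup_{n\to\infty} F_{\epsilon_{n}}'(y_{\epsilon_{n}})(x)\le \overline{\partial} f_{x}(\overline{y}(x))
\quad\text{and}\quad
\liminf_{n\to\infty} F_{\epsilon_{n}}'(y_{\epsilon_{n}})(x)\ge \underline{\partial} f_{x}(\overline{y}(x)).
\]

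The hard part will be upgrading these pointwise $\limsup/\liminf$ bounds on the sequence into a genuine pointwise bound on its weak$^\ast$ limit $\zeta$, since weak$^\ast$ convergence alone carries no pointwise information. My plan is to invoke Mazur's lemma. Because $\om$ is bounded and $(F_{\epsilon_{n}}'(y_{\epsilon_{n}}))_{n}$ is bounded in $L^{\infty}(\om)\subset L^{2}(\om)$, the weak$^\ast$ convergence in $L^{\infty}(\om)$ entails weak convergence in $L^{2}(\om)$; hence for each $N$ the limit $\zeta$ lies in the strong $L^{2}$-closure of the convex hull of the tail $\{F_{\epsilon_{n}}'(y_{\epsilon_{n}}):n\ge N\}$. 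Choosing finite convex combinations $\tilde{\zeta}_{N}=\sum_{n\ge N}\lambda_{n}^{N} F_{\epsilon_{n}}'(y_{\epsilon_{n}})$ with $\tilde{\zeta}_{N}\to\zeta$ in $L^{2}(\om)$ and passing to a subsequence converging almost everywhere, I would fix $\delta>0$ and use the $\limsup$ bound to find, for almost every $x$, an index $n_{0}(x)$ with $F_{\epsilon_{n}}'(y_{\epsilon_{n}})(x)\le \overline{\partial} f_{x}(\overline{y}(x))+\delta$ for all $n\ge n_{0}(x)$. Since $\tilde{\zeta}_{N}$ averages only indices $n\ge N$, this forces $\tilde{\zeta}_{N}(x)\le \overline{\partial} f_{x}(\overline{y}(x))+\delta$ once $N\ge n_{0}(x)$; letting $N\to\infty$ and then $\delta\downarrow 0$ gives $\zeta(x)\le\overline{\partial} f_{x}(\overline{y}(x))$. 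The symmetric argument with the $\liminf$ bound yields $\zeta(x)\ge\underline{\partial} f_{x}(\overline{y}(x))$.

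Putting these together, $\zeta(x)\in[\underline{\partial} f_{x}(\overline{y}(x)),\overline{\partial} f_{x}(\overline{y}(x))]=\partial f(x,\overline{y}(x))$ for almost every $x\in\om$, which is \eqref{C-stationary}; as $\overline{u}$ is already weakly stationary by Proposition \ref{weak_stationarity}, it is therefore $C$-stationary. A minor technical point I would verify in passing is the measurability in $x$ of the one-sided derivatives $\underline{\partial} f_{x}(\overline{y}(x))$ and $\overline{\partial} f_{x}(\overline{y}(x))$, which follows from \eqref{assumption_1} together with the $PC^{1}$ structure; note, however, that the Mazur argument only uses the pointwise inequalities on the almost-everywhere convergent combinations, and so does not require assembling the bounding functions into a constraint set.
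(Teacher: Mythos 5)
Your proof is correct and shares the paper's overall skeleton (weak stationarity from Proposition \ref{weak_stationarity}, the pointwise convergence $y_{\epsilon}(x)\to\overline{y}(x)$ from the $C^{0,a}$-approximation, and Lemma \ref{PC1_lemma} to obtain the pointwise $\limsup/\liminf$ bounds on $F_{\epsilon}'(y_{\epsilon})$), but you handle the only genuinely delicate step --- transferring those pointwise bounds to the weak$^{\ast}$ limit $\zeta$ --- differently. The paper argues by contradiction: it supposes $\zeta<\underline{\partial}f(\cdot,\overline{y}(\cdot))$ on a set $A$ of positive measure, integrates against $\mathbbm{1}_{A}$, and combines Fatou's lemma with the weak$^{\ast}$ convergence to reach $\int_{\om}\zeta\mathbbm{1}_{A}\,dx<\int_{\om}\zeta\mathbbm{1}_{A}\,dx$ (and a reverse-Fatou variant for the upper bound). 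You instead use Mazur's lemma on the tails of the sequence, extract convex combinations converging almost everywhere, and bound those combinations pointwise by $\overline{\partial}f_{x}(\overline{y}(x))+\delta$ once the averaging index exceeds $n_{0}(x)$; this is a direct (non-contradiction) argument and it sidesteps the need to form and integrate over the set $\{\zeta<\underline{\partial}f(\cdot,\overline{y}(\cdot))\}$, so it is marginally lighter on measurability of the bounding functions $x\mapsto\underline{\partial}f_{x}(\overline{y}(x))$, $x\mapsto\overline{\partial}f_{x}(\overline{y}(x))$ (the paper's route implicitly uses measurability of such sets, though this is harmless since one may always replace the bound by the measurable function $\liminf_{\epsilon}f_{\epsilon}'(\cdot,y_{\epsilon}(\cdot))$). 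The Fatou route is shorter on the page; your Mazur route is more self-contained at the pointwise level and generalizes verbatim to any situation where one has tail-uniform pointwise upper bounds and weak convergence. One small point to make explicit when writing it up: take a single countable sequence $\delta_{j}\downarrow 0$ so that the exceptional null sets can be collected into one.
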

\begin{proof}
Since $\overline{u}$ is a local minimizer, by Proposition \ref{weak_stationarity} we have that it is weakly stationary, satisfying \eqref{weak_station_1} for a $\zeta$ obtained via $F_{\epsilon}'(y_{\epsilon}) \overset{\ast}{\rightharpoonup}\zeta$ in $L^{\infty}(\om)$. Since for almost every $x\in\om$, $f(x,\cdot)\in PC^{1}(\mathbb{R})$, and $y_{\epsilon}(x)\to \overline{y}(x)$ as $\epsilon\to 0$, we can apply Lemma \ref{PC1_lemma} and get that for almost every $x\in\om$ we have
\begin{equation}\label{app_PC1_lemma}
\limsup_{\epsilon\searrow 0} f_{\epsilon}'(x,y_{\epsilon}(x))\le \overline{\partial} f(x,\overline{y}(x))
\quad \text{ and }\quad
\liminf_{\epsilon\searrow 0} f_{\epsilon}'(x,y_{\epsilon}(x))\ge \underline{\partial} f(x,\overline{y}(x)).
\end{equation}
From the PC$^{1}$-property of $f$ we have that for almost every $x\in\om$  
\[\partial f (x, \overline{y}(x))=[\underline{\partial} f (x, \overline{y}(x)), \overline{\partial} f (x, \overline{y}(x))].\]
In order to prove \eqref{C-stationary} for $\zeta$ we aim for a contradiction and assume that there exists a set $A\subset \om$ of positive measure such that $\zeta(x) < \underline{\partial} f (x,\overline{y}(x))$ for all $x\in A$. Using \eqref{app_PC1_lemma}, Fatou's lemma and the fact that $F_{\epsilon}'(y_{\epsilon}) \overset{\ast}{\rightharpoonup}\zeta$ in $L^{\infty}(\om)$ we have
\begin{equation*}
\int_{\om} \zeta \mathbbm{1}_{A}\, dx
< \int_{\om} \liminf_{\epsilon\searrow 0} f_{\epsilon}'(x,y_{\epsilon}(x)) \mathbbm{1}_{A}\, dx \le \liminf_{\epsilon\searrow 0} \int_{\om }f_{\epsilon}'(x,y_{\epsilon}(x)) \mathbbm{1}_{A}\, dx 
= \int_{\om} \zeta \mathbbm{1}_{A}\, dx,
\end{equation*}
which is a contradiction. Similarly, via contradiction and using a reversed version of Fatou's lemma for nonnegative functions bounded from above, $\zeta(x)>\overline{\partial} f (x,\overline{y}(x))$ cannot hold on a set of positive measure.
\end{proof}

\textbf{Strong stationarity}. Next we examine conditions that, additionally to $C$-stationarity, will guarantee a certain sign condition on the adjoint state $p$. We say that a point $\overline{u}\in \mathcal{C}_{ad}$ with state $\overline{y}$ satisfies the \emph{sign condition} $\sigma_{(A)}$ on a measurable set $A\subset \om$ if it is $C$-stationary and 
\begin{equation}\label{sign_condition}\tag{$\sigma_{(A)}$}
\zeta(x)p(x) \in [f_{+}'(\overline{y}(x))p(x), f_{-}'(\overline{y}(x))p(x)]\quad \text{ for almost every $x\in A$.}
\end{equation}
Then, $\overline{u}$ is called \emph{strongly  stationary} if it satisfies the sign condition $\sigma_{(\om)}$ on the whole of $\om$. Note that strong stationarity always implies $C$-stationarity even if $f$ does not satisfy the $PC^{1}$ condition. This is due to the fact that $[\underline{\partial} f(x,\overline{y}(x)), \overline{\partial} f (x, \overline{y}(x))]\subset \partial f (x, \overline{y}(x))$ which is a consequence of the directional differentiability of $f$. Note also that on the set $\{x\in\om: p(x)=0\}$ one can change $\zeta$ such that \eqref{C-stationary} is satisfied without affecting \eqref{weak_station_1}.
 
Here we examine conditions under which local minimizers of \eqref{reduced_prop} are  strongly stationary. We will also formulate a constraint qualification under which $B$-stationarity is equivalent to strong stationarity. The next proposition states that the latter always implies the former. We note that these results are inspired by the investigations in the preprint \cite{Betz}.

\begin{proposition}\label{strong_implies_B}
Suppose that $\overline{u}$ is strongly stationary. Then $\overline{u}$ is $B$-stationary.
\end{proposition}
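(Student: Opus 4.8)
The plan is to start from the primal characterization of the directional derivative established in Proposition \ref{B_stationarity}, namely $\mathcal{J}'(\overline{u};h) = \langle \overline{y}-g, S'(\overline{u};h)\rangle + \alpha\langle \overline{u}, h\rangle$, and to convert it, via the adjoint state, into an expression whose sign can be read off directly from the strong stationarity system. Fix an arbitrary $h\in T_{\mathcal{C}_{ad}}(\overline{u})$ and set $z_h := S'(\overline{u};h)$, which by Proposition \ref{cts_diff} is the unique solution in $Y\cap C^{0,a}(\overline{\om})$ of the linearized equation \eqref{adjoint}, i.e.\ $-\Delta z_h + F'(\overline{y};z_h)=h$ with $F'(\overline{y};z_h)(x)=f_x'(\overline{y}(x);z_h(x))$. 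Using the adjoint equation \eqref{weak_station_1} to replace $\overline{y}-g$ by $-\Delta p + \zeta p$, integrating by parts (legitimate since $p,z_h\in Y\subset H_0^1(\om)$ with distributional Laplacians in $L^2(\om)$), and substituting $-\Delta z_h = h - F'(\overline{y};z_h)$ from the linearized equation, I would arrive at
\begin{equation*}
\mathcal{J}'(\overline{u};h) = \langle p+\alpha\overline{u}, h\rangle + \int_{\om} p\,\big(\zeta z_h - f_x'(\overline{y};z_h)\big)\,dx .
\end{equation*}
It then suffices to show that both summands are nonnegative.

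For the first summand I would use the variational inequality \eqref{weak_station_2}. Since $h$ lies in the contingent cone, there are $t_n\downarrow 0$ and $h_n\to h$ in $L^p(\om)$ with $\overline{u}+t_nh_n\in \mathcal{C}_{ad}$; testing \eqref{weak_station_2} with $u=\overline{u}+t_nh_n$ and dividing by $t_n$ gives $\langle p+\alpha\overline{u}, h_n\rangle\ge 0$, and passing to the limit (using $p,\overline{u}\in L^2(\om)$ and $h_n\to h$ in $L^2(\om)$, which follows from $L^p$-convergence as $p\ge 2$ on a bounded domain) yields $\langle p+\alpha\overline{u}, h\rangle\ge 0$.

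The heart of the argument, and the step I expect to be the main obstacle, is the pointwise analysis of the second summand. By positive homogeneity of the directional derivative (Assumption \eqref{assumption_4}) one has, almost everywhere, $f_x'(\overline{y};z_h)=f_+'(\overline{y})z_h$ on $\{z_h\ge 0\}$ and $f_x'(\overline{y};z_h)=f_-'(\overline{y})z_h$ on $\{z_h<0\}$, so that the integrand equals $p(\zeta-f_+'(\overline{y}))z_h$ on the first set and $p(\zeta-f_-'(\overline{y}))z_h$ on the second; note also $|f_x'(\overline{y};z_h)|\le L|z_h|$, so everything above lies in $L^2(\om)$ and the dualities are well defined. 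Reading the sign condition \eqref{sign_condition} as the ordered inequalities $f_+'(\overline{y})p\le \zeta p\le f_-'(\overline{y})p$ gives precisely $p(\zeta-f_+'(\overline{y}))\ge 0$ and $p(\zeta-f_-'(\overline{y}))\le 0$ almost everywhere; multiplying the former by $z_h\ge 0$ on the first set and the latter by $z_h<0$ on the second shows the integrand is nonnegative pointwise, whence the integral is nonnegative. Combining the two summands yields $\mathcal{J}'(\overline{u};h)\ge 0$ for every $h\in T_{\mathcal{C}_{ad}}(\overline{u})$, which is exactly the $B$-stationarity inequality \eqref{B_stationarity_VI}. The delicate point is the bookkeeping that matches the two branches of $f_x'(\overline{y};\cdot)$, selected by the sign of $z_h$, to the two halves of the sign condition; this is exactly where strong stationarity, as opposed to mere $C$-stationarity, is used.
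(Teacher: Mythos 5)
Your argument is correct and is essentially the paper's own proof: both rest on testing the adjoint equation \eqref{weak_station_1} against the linearized state equation \eqref{adjoint}, extending the variational inequality \eqref{weak_station_2} to the contingent cone, and using positive homogeneity of $F'(\overline{y};\cdot)$ together with the sign condition \eqref{sign_condition} split over $\{z_h\ge 0\}$ and $\{z_h<0\}$ to conclude $\langle \zeta z_h - F'(\overline{y};z_h), p\rangle\ge 0$. Writing $\mathcal{J}'(\overline{u};h)$ as an explicit sum of two nonnegative terms rather than as a chain of inequalities is only a cosmetic rearrangement of the same computation.
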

\begin{proof}
For $z\in L^{2}(\om)$, we have $z=z^{+}-z^{-}$ with $z^{+}:=\max(0,z)$, $z^{-}=\max(0, -z)$ and we write
$\om_{z^{+}}:=\{x\in \om: z(x)\ge 0\}$, $\om_{z^{-}}:=\{x\in \om: z(x)\le 0\}$. From the positive homogeneity of the directional derivative with respect to the direction and from the sign condition for $\zeta$ and $p$ we have
\begin{align*}
\int_{\om}  F'(\overline{y};z) p\, dx
&=\int_{\om_{z^+}} F'(\overline{y}; z^{+})p\, dx + \int_{\om_{z^-}} F'(\overline{y}; -z^{-})p\, dx\\ 
&=\int_{\om_{z^+}}  f_{+}'(\overline{y})  pz^{+}\, dx + \int_{\om_{z^-}}   f_{-}'(\overline{y})  p(-z^{-})\, dx\\ 
&\le \int_{\om_{z^+}} \zeta p z^{+}\, dx +  \int_{\om_{z^-}} \zeta p (-z^{-})\, dx= \int_{\om} \zeta p z \, dx
\end{align*}
and hence
\begin{equation}\label{strong_implies_B_1}
\langle \zeta z, p \rangle -  \langle F'(\overline{y};z), p  \rangle \ge 0.
\end{equation}
In view of the adjoint equation \eqref{weak_station_1} we have that $p$ satisfies
\begin{equation}\label{strong_implies_B_2}
\langle -\Delta z+\zeta z, p \rangle 
= \langle \nabla z, \nabla p \rangle + \langle\zeta z, p  \rangle
= \langle -\Delta p, z \rangle+ \langle\zeta p, z  \rangle
= \langle \overline{y}-g, z \rangle, \;\; \text{for all }z\in Y. 
\end{equation}
For $h\in L^{p}(\om)$, let $z:= S'(\overline{u}, h)$. Then, testing the characterizing equation of $z$ from Proposition \ref{cts_diff} with $p$ we get
\begin{equation}\label{strong_implies_B_3}
\langle -\Delta z , p \rangle = - \langle F'(\overline{y}; z), p  \rangle + \langle h, p \rangle.
\end{equation}
Furthermore, we also have that 
\begin{equation}\label{strong_implies_B_4}
\langle p,h  \rangle \ge  -\alpha\langle  \overline{u}, h   \rangle, \quad \text{ for all }h \in T_{\mathcal{C}_{ad}}(\overline{u}). 
\end{equation}
Indeed, using  \eqref{weak_station_2}  we have that this is true for every $h\in \mathcal{R}_{\mathcal{C}_{ad}}= \{t(u-\overline{u}): u\in \mathcal{C}_{ad}, t>0\}$, and then \eqref{strong_implies_B_4} follows by using the fact that $T_{\mathcal{C}_{ad}}(\overline{u})=\overline{\mathcal{R}_{\mathcal{C}_{ad}} (\overline{u})}$ and the continuity of $\langle p+\alpha \overline{u}, \cdot \rangle$. We fix $h\in  T_{\mathcal{C}_{ad}}(\overline{u})$ and let $z=S'(\overline{u};h)$. From \eqref{strong_implies_B_2}, \eqref{strong_implies_B_3} as well as \eqref{strong_implies_B_1} and \eqref{strong_implies_B_4} we have 
\begin{equation}
\langle \overline{y}-g, S'(\overline{u};h)   \rangle 
=\langle \overline{y}-g, z   \rangle 
= \langle \zeta z, p \rangle + \langle -\Delta z, p  \rangle 
=  \langle \zeta z, p \rangle - \langle F'(\overline{y}; z), p  \rangle + \langle h, p \rangle\ge -\alpha \langle \overline{u}, h\rangle.
\end{equation}
Since $h\in  T_{\mathcal{C}_{ad}}(\overline{u})$ is arbitrary, the proof is complete.
 \end{proof}

We now introduce a measurability assumption which will be relevant to the constraint qualification which we state afterwards.
\begin{assumption}\label{measu_nondiff}
The set $\om_{f}:= \{ x\in \om:\,f(x,\cdot) \text{ is nondifferentiable at } \overline{y}(x)\}$ is Lebesgue measurable.
\end{assumption}
 Assumption \ref{measu_nondiff} holds for instance when $f$ satisfies \eqref{assumption_1}--\eqref{assumption_4} and additionally is independent of $x$, or if it satisfies Assumption \ref{assum_pc1} and is jointly continuous on $\om\times \mathbb{R}$. 

We now introduce  a constraint qualification for $\overline{u}\in \mathcal{C}_{ad}$ that lead to strong stationarity as a necessary condition in different settings:
\begin{equation}\label{cq}\tag{$CQ$}
T_{\mathcal{C}_{ad}}(\overline{u}) \text{ is dense in }L^{2}(\om).
\end{equation}
Furthermore, if $\mathcal{C}_{ad}$ is defined via box constraints as in \eqref{Cad_box},  and Assumption \ref{measu_nondiff} holds, then we define
\begin{equation}\label{cqf}\tag{$CQ_{f}$}
\om_{f}\cap \overline{\om}_{a,b} \text{ has Lebesgue measure zero,}
\end{equation} 
where 
\[\om_{a,b}:=\{x\in \om:\; \overline{u}(x)=u_{a}(x) \text{ or } \overline{u}(x)=u_{b}(x) \}.\]

We note that if $\mathcal{C}_{ad}$ is given as a box constraint set as in \eqref{Cad_box} then \eqref{cq} is satisfied if and only if $\om_{a,b}$ has Lebesgue measure zero. This can be seen directly from the characterization \eqref{TCad_box} of $T_{\mathcal{C}_{ad}}(\overline{u}) $ for a box constraint set. Thus if $\om_{a,b}$ is closed, then \eqref{cq} implies \eqref{cqf}. Note also that \eqref{cqf} is satisfied independently of the measure of $\om_{a,b}$ in the case that $f$  is differentiable in the second variable.

\begin{proposition}\label{B_C_implies_strong}
Let $\overline{u}$ with corresponding state $\overline{y}$ be a point which is both $B$-stationary and $C$-stationary. Moreover assume that Assumption \ref{measu_nondiff} holds and that $\mathcal{C}_{ad}$ is defined as in \eqref{Cad_box}. Then $\overline{u}$ satisfies the sign condition $\sigma_{(\om \setminus (\om_{f}\cap \overline{\om}_{a,b}))}$. In particular, if $\eqref{cqf}$ is satisfied, then $\overline{u}$ is strongly stationary.
\end{proposition}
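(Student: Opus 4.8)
The plan is to convert the primal $B$-stationarity inequality \eqref{B_stationarity_VI} into a statement about $\zeta p$ by inserting the adjoint equation \eqref{weak_station_1}, and then to test it along directions that are localized in the open inactive set, where the control constraint is inactive and both signs of the test function are admissible. First I would fix $h\in T_{\mathcal{C}_{ad}}(\overline u)$, set $z:=S'(\overline u;h)$, which by Proposition \ref{cts_diff} solves \eqref{adjoint}, i.e. $-\Delta z+F'(\overline y;z)=h$, and pair \eqref{weak_station_1} with $z$. Integrating by parts (both $p,z\in H_0^1(\om)$ with $L^2$ Laplacians) and using $-\Delta z=h-F'(\overline y;z)$ gives $\langle \overline y-g,z\rangle=\langle p,h\rangle-\langle p,F'(\overline y;z)\rangle+\langle\zeta p,z\rangle$, so \eqref{B_stationarity_VI} reads
\[
\langle p+\alpha\overline u,\,h\rangle+\int_{\om}\big(\zeta z-F'(\overline y;z)\big)p\,dx\ge 0,\qquad h\in T_{\mathcal{C}_{ad}}(\overline u).
\]
By positive homogeneity of the directional derivative, $F'(\overline y;z)=f_+'(\overline y)z$ on $\{z\ge 0\}$ and $F'(\overline y;z)=f_-'(\overline y)z$ on $\{z<0\}$, so the integrand is $(\zeta-f_+'(\overline y))zp$ where $z\ge 0$ and $(\zeta-f_-'(\overline y))zp$ where $z<0$.

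Next I would localize to the open set $V:=\om\setminus\overline{\om}_{a,b}$. Since $\mathcal{C}_{ad}$ is of box type \eqref{Cad_box} and $\overline u$ is $C$-, hence weakly, stationary, the complementarity relations \eqref{mu1}--\eqref{mu3} yield $p+\alpha\overline u=0$ on the inactive set, so $\langle p+\alpha\overline u,h\rangle=0$ for any $h$ vanishing on $\om_{a,b}$. The crucial construction is: for arbitrary $z\in C_c^\infty(V)$ define $h:=-\Delta z+F'(\overline y;z)$. Because $z$ has compact support in the open set $V$, both $-\Delta z$ and $F'(\overline y;z)$ are supported in $V$, hence $h\in L^p(\om)$ vanishes on $\om_{a,b}$ and thus lies in $T_{\mathcal{C}_{ad}}(\overline u)$ by \eqref{TCad_box}; moreover $z=S'(\overline u;h)$ by the uniqueness in Proposition \ref{cts_diff}. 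Substituting into the displayed inequality and dropping the now vanishing boundary term leaves $\int_{V}\big(\zeta z-F'(\overline y;z)\big)p\,dx\ge 0$ for every $z\in C_c^\infty(V)$.

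Choosing first $z\ge 0$ and then $z\le 0$ in $C_c^\infty(V)$ isolates the two terms, and the fundamental lemma of the calculus of variations gives $(\zeta-f_+'(\overline y))p\ge 0$ and $(\zeta-f_-'(\overline y))p\le 0$ almost everywhere on $V$, that is,
\[
f_+'(\overline y(x))\,p(x)\le \zeta(x)\,p(x)\le f_-'(\overline y(x))\,p(x)\quad\text{for a.e. }x\in V,
\]
which is precisely \eqref{sign_condition} on $V$ (and in passing shows the interval is nondegenerate in the right orientation). It remains to treat $\overline{\om}_{a,b}\setminus\om_f$: there $f(x,\cdot)$ is differentiable at $\overline y(x)$, so $f_+'(\overline y(x))=f_-'(\overline y(x))$ and the corresponding Clarke gradient $\partial f(x,\overline y(x))$ collapses to this single value; the $C$-stationarity relation \eqref{C-stationary} then forces $\zeta(x)$ to equal it, so \eqref{sign_condition} holds with both endpoints coinciding. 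Since $A=V\cup(\om\setminus\om_f)$, this establishes $\sigma_{(A)}$; if in addition \eqref{cqf} holds, then $\om_f\cap\overline{\om}_{a,b}$ is a null set, $A=\om$ up to measure zero, and $\overline u$ is strongly stationary.

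The main obstacle is the admissibility of the test directions: I must realize $z=S'(\overline u;h)$ with freely prescribed sign on $V$ while simultaneously keeping $h\in T_{\mathcal{C}_{ad}}(\overline u)$ and annihilating the term $\langle p+\alpha\overline u,h\rangle$. This is resolved by noticing that for $z$ with compact support in the \emph{open} set $V$ the source $h=-\Delta z+F'(\overline y;z)$ inherits the support of $z$ and hence vanishes on the active set; this step genuinely requires working on $\om\setminus\overline{\om}_{a,b}$ rather than on $\om\setminus\om_{a,b}$, and it is exactly the reason why, absent the constraint qualification \eqref{cqf}, the sign condition can only be certified on $A$ and not on the residual set $\om_f\cap\overline{\om}_{a,b}$, where nondifferentiability and active constraints overlap.
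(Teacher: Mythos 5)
Your proof is correct and follows essentially the same route as the paper: combine the $B$-stationarity inequality with the adjoint equation, restrict to directions $h=-\Delta z+F'(\overline y;z)$ generated by test functions supported away from $\overline{\om}_{a,b}$ so that $h$ vanishes on the active set and hence lies in $T_{\mathcal{C}_{ad}}(\overline u)$, extract the two one-sided inequalities from signed test functions, and settle $\om\setminus\om_{f}$ by $C$-stationarity. The only difference is technical: where you take $z\in C_c^{\infty}(\om\setminus\overline{\om}_{a,b})$ directly and apply the fundamental lemma on that open set, the paper builds the same class of localized directions as $z=v\psi_{m}$ with a Whitney-type cutoff $\psi_{m}$ (Lemma \ref{psi_n}) and then passes to the limit $\mathbbm{1}_{\supp\psi_{m}}\to\mathbbm{1}_{\om\setminus\overline{\om}_{a,b}}$; the two devices are interchangeable here and yield the sign condition on exactly the same set.
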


\begin{proof}
We fix an arbitrary direction $h\in T_{\mathcal{C}_{ad} (\overline{u})}$ and let $z:=S'(\overline{u};h)\in Y$. Testing the equation that characterizes $z$ with $p$ and using \eqref{mu1} from weak stationarity, as well as \eqref{B_stationarity_VI} from $B$-stationarity and \eqref{strong_implies_B_2} we get
\begin{equation*}
\langle -\Delta z + F'(\overline{y};z), p \rangle
= \langle h,p  \rangle
=- \alpha\langle \overline{u}, h \rangle - \langle \mu,h\rangle
\le \langle \overline{y}-g, z \rangle - \langle \mu,h\rangle
=   \langle -\Delta z +\zeta z, p\rangle  - \langle \mu,h\rangle,
\end{equation*}
and thus
\begin{equation}\label{B_C_implies_strong_1}
0 \le \langle \zeta z  - F'(\overline{y};z), p \rangle  - \langle \mu,h\rangle
=\int_{\om} (\zeta z- F'(\overline{y};z))p\,dx -\int_{\om}\mu h\,dx
=\int_{\om_{f}} (\zeta z- F'(\overline{y};z))p\,dx -\int_{\om_{a,b}}\mu h\,dx,
\end{equation}
where in the last inequality we used the fact that from $C$-stationarity we have $\zeta(x)z(x)- F'(\overline{y}(x); z(x))=0$ for almost every $x\in \om \setminus \om_{f}$ and   $\mu=0$ on $\om \setminus \om_{a,b}$ which can be easily verified. 

According to Lemma \ref{psi_n} we can find a non-negative $\psi_{m}\in C^{\infty}(\RR^{d})$ such that for an arbitrary nonnegative function $v\in C_{c}^{\infty}(\om)$ the function
\[h_{m}:=-\Delta(v\psi_{m})(x)+ F'(\overline{y};v\psi_{m})(x), \]
belongs to $L^{\infty}(\om)$ and vanishes almost everywhere in $\om_{a,b}$, in particular $h_{m}\in T_{\mathcal{C}_{ad}}(\bar u)$ and it also holds $v\psi_{m}=S'(\overline{u};h_{m})$. Thus we can take $z=v\psi_{m}$ in \eqref{B_C_implies_strong_1} and by using the positive homogeneity of the directional derivative, we get
\[0\le \int_{\om_{f}} (\zeta v \psi_{m} -F'(\overline{y};v\psi_{m}) )p\,dx = \int_{\om} \mathbbm{1}_{\om_{f}} (\zeta-f_{+}'(\overline{y})) \psi_{m}p v\, dx. \]
Since $v\in C_{c}^{\infty}(\om)$ is an arbitrary nonnegative function and from the fundamental lemma of calculus of variations, and the fact that $\psi_{m}$ is also nonnegative,  this implies that 
$(\zeta(x)-f_{+}'(\overline{y}(x)))p(x)\ge 0$ for almost every $x\in\om_{f}\cap \mathrm{supp}\psi_{m}$. Using the pointwise convergence $\mathbbm{1}_{\mathrm{supp}\psi_{m}} \to \mathbbm{1}_{\om\setminus \overline{\om}_{a,b}}$   from Lemma \ref{psi_n} we conclude that
\[\zeta(x) p(x)\ge f_{+}'(\overline{y}(x))p(x)\quad \text{ for almost every }x\in \om_{f}\setminus \overline{\om}_{a,b} .\]
By taking $z=-v\psi_{m}$ in \eqref{B_C_implies_strong_1} we get similarly
\[0\le \int_{\om_{f}} (\zeta (-v \psi_{m}) -F'(\overline{y};-v\psi_{m}) )p\,dx = \int_{\om} \mathbbm{1}_{\om_{f}} (f_{-}'(\overline{y})-\zeta) \psi_{m}p v\, dx \]
and using the same argument we end up with
\[\zeta(x) p(x)\le f_{-}'(\overline{y}(x))p(x)\quad \text{ for almost every }x\in \om_{f}\setminus \overline{\om}_{a,b} .\]
Since for every $x\in \om\setminus \om_{f}$ we have $\zeta(x)p(x)=f'(\overline{y}(x))p(x)$ due to the $C$-stationarity, we deduce that $\overline{u}$ satisfies the sign condition $\sigma_{(\om \setminus (\om_{f}\cap \overline{\om}_{a,b}))}$.
\end{proof}

Note that under Assumptions \ref{assum_pc1} and \ref{measu_nondiff}, the conditions of the previous proposition are satisfied for any local minimizer $\overline{u}$ of \eqref{reduced_prop}. In the next proposition we  show that the converse of Proposition \ref{strong_implies_B} is also true provided that the constraint qualification \eqref{cq} holds. The proof here closely follows  \cite[Proposition 4.13]{christof}.

\begin{proposition}\label{B_implies_strong}
Suppose that one of the following is satisfied for  $\overline{u}$:
\begin{enumerate}
\item The constraint qualification \eqref{cq} holds.
\item The set $\om_{f}$ has zero Lebesgue measure and  $\mathcal{C}_{ad}$ is given as a box constraint set.
\end{enumerate}
Then we have that if $\overline{u}$ is $B$-stationary, it is also strongly stationary.
\end{proposition}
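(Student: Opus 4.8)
The plan is to adapt the proof of \cite[Proposition 4.13]{christof}, producing from $B$-stationarity a dual pair $(p,\zeta)$ that satisfies the weak/$C$-stationarity system \eqref{weak_station_1}--\eqref{weak_station_2}, \eqref{C-stationary} together with the sign condition $\sigma_{(\om)}$, and to treat the two constraint settings separately. The common starting point is the $B$-stationarity inequality \eqref{B_stationarity_VI}, i.e.\ $\mathcal{J}'(\overline{u};h)=\langle \overline{y}-g,S'(\overline{u};h)\rangle+\alpha\langle\overline{u},h\rangle\ge 0$ for every $h\in T_{\mathcal{C}_{ad}}(\overline{u})$, where $z_h:=S'(\overline{u};h)$ is the unique solution of the linearized equation \eqref{adjoint}. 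Since the map $z\mapsto F'(\overline{y};z)$ is monotone, testing the difference of two copies of \eqref{adjoint} shows that $h\mapsto z_h$ is globally Lipschitz from $L^2(\om)$ into $Y\hookrightarrow L^2(\om)$, so $h\mapsto\mathcal{J}'(\overline{u};h)$ is Lipschitz continuous on $L^2(\om)$. I will also use the testing identity $\mathcal{J}'(\overline{u};h)=\langle p+\alpha\overline{u},h\rangle+\langle \zeta z_h-F'(\overline{y};z_h),p\rangle$, which follows by pairing \eqref{adjoint} with $p$ and \eqref{weak_station_1} with $z_h$, exactly as in \eqref{strong_implies_B_2}--\eqref{strong_implies_B_3}.

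First I would treat case (ii). Because $\om_f$ is a Lebesgue null set, $f(x,\cdot)$ is differentiable at $\overline{y}(x)$ for almost every $x$, so the Nemytskii linearization coincides a.e.\ with the linear map $z\mapsto f_x'(\overline{y})\,z$ and \eqref{adjoint} becomes a linear elliptic equation; in particular $h\mapsto z_h$ is linear. Setting $\zeta(x):=f_x'(\overline{y}(x))$, which is nonnegative, lies in $L^\infty(\om)$, and satisfies $\zeta(x)\in\partial f(x,\overline{y}(x))$ a.e., let $p\in Y$ be the unique solution of the now-linear adjoint equation \eqref{weak_station_1}. The identity above reduces to $\mathcal{J}'(\overline{u};h)=\langle p+\alpha\overline{u},h\rangle$, so $B$-stationarity yields $\langle p+\alpha\overline{u},h\rangle\ge 0$ for all $h\in T_{\mathcal{C}_{ad}}(\overline{u})$, hence \eqref{weak_station_2} via $\mathcal{R}_{\mathcal{C}_{ad}}\subset T_{\mathcal{C}_{ad}}(\overline{u})$. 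Thus $\overline{u}$ is weakly and $C$-stationary, and since \eqref{sign_condition} is automatic on $\om\setminus\om_f$ (where $f_+'=f_-'=\zeta=f'$) while its only nontrivial content lies on the null set $\om_f$, it holds on all of $\om$; equivalently \eqref{cqf} is trivially satisfied and Proposition \ref{B_C_implies_strong} applies, giving strong stationarity.

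Next, case (i), where $\om_f$ may carry positive measure and the linearization is genuinely nonsmooth. The constraint qualification \eqref{cq} makes $T_{\mathcal{C}_{ad}}(\overline{u})$ dense in $L^2(\om)$, so by the Lipschitz continuity above the $B$-stationarity inequality extends to $\mathcal{J}'(\overline{u};h)\ge 0$ for all $h\in L^2(\om)$. I would then select a measurable $\zeta(x)\in\partial f(x,\overline{y}(x))$ (equal to $f_x'(\overline{y})$ off $\om_f$), let $p\in Y$ solve \eqref{weak_station_1}, and deduce weak stationarity from the now-unrestricted inequality together with continuity of $\langle p+\alpha\overline{u},\cdot\rangle$, which forces $\langle p+\alpha\overline{u},h\rangle\ge 0$ for all $h$. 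To obtain the sign condition I reuse the localizing directions of Lemma \ref{psi_n}: for nonnegative $v\in C_c^\infty(\om)$ and the cut-offs $\psi_m$ one forms $h_m:=-\Delta(v\psi_m)+F'(\overline{y};v\psi_m)$, for which $S'(\overline{u};h_m)=v\psi_m$, and density of $T_{\mathcal{C}_{ad}}(\overline{u})$ makes each $h_m$ an admissible test direction (this is precisely where \eqref{cq} enters). Inserting $z=\pm v\psi_m$ into the testing identity and letting $m\to\infty$ with the fundamental lemma of the calculus of variations yields $(\zeta-f_+'(\overline{y}))p\ge 0$ and $(\zeta-f_-'(\overline{y}))p\le 0$ a.e.\ on $\om_f$, which together constitute $\sigma_{(\om)}$; with $C$-stationarity this is strong stationarity.

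The hard part will be the construction in case (i): exhibiting a single measurable selection $\zeta\in\partial f(\cdot,\overline{y})$ for which the adjoint equation \eqref{weak_station_1} is solvable while the sign relations on $\om_f$ hold simultaneously, since the admissible choice of $\zeta$ on $\om_f$ is coupled to the sign of the not-yet-known $p$ through $\zeta=(\overline{y}-g+\Delta p)/p$ on $\{p\ne 0\}$. The constraint qualification \eqref{cq} is indispensable exactly here: without density of $T_{\mathcal{C}_{ad}}(\overline{u})$ the cone of feasible test directions is too small to localize on $\om_f$ and the sign inequalities cannot be extracted. This is why case (i) relies on \eqref{cq}, whereas case (ii) trades it for the triviality of the nonsmoothness set $\om_f$.
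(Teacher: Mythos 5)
Your treatment of case (ii) is correct and in fact takes a cleaner route than the paper: since $\om_f$ is a null set, $F'(\overline{y};\cdot)$ is a.e.\ the linear multiplication operator $z\mapsto f'(\overline{y})z$, so one may set $\zeta:=f'(\overline{y})$, solve the linear adjoint equation for $p$, and read off $\langle p+\alpha\overline{u},h\rangle=\mathcal{J}'(\overline{u};h)\ge 0$ on $T_{\mathcal{C}_{ad}}(\overline{u})$, with the sign condition vacuous on the null set $\om_f$. (The paper instead invokes a KKT multiplier existence result for the box constraints and then reruns its case-(i) machinery; your argument does not even use the box structure.)

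Case (i), however, has a genuine gap, and it is exactly the one you flag yourself as ``the hard part'': you propose to \emph{first} pick a measurable selection $\zeta\in\partial f(\cdot,\overline{y})$, \emph{then} solve \eqref{weak_station_1} for $p$, and then verify \eqref{weak_station_2} and \eqref{sign_condition}. But for an arbitrary a priori choice of $\zeta$ on $\om_f$ the testing identity picks up the term $\langle \zeta z_h-F'(\overline{y};z_h),p\rangle$, which does not vanish, so $B$-stationarity does not yield $\langle p+\alpha\overline{u},h\rangle\ge 0$; and the admissible values of $\zeta$ on $\om_f$ depend on the sign of the not-yet-constructed $p$. This circularity is not a technicality to be patched later --- it is the whole content of the step, and your plan does not resolve it. The paper breaks the circle by reversing the order of construction: under \eqref{cq} it sets $p:=-\alpha\overline{u}$ \emph{explicitly} (so \eqref{weak_station_2} holds trivially), defines the distribution $\xi:=\Delta p+(\overline{y}-g)\in Y^{\ast}$, uses the $B$-stationarity inequality on the set $\{S'(\overline{u};h):h\in T_{\mathcal{C}_{ad}}(\overline{u})\}$ --- which is dense in $Y$ precisely because of \eqref{cq} --- to bound $\langle\xi,\cdot\rangle$ by the $L^2$-norm, extends $\xi$ to an element of $L^{2}(\om)$ by Hahn--Banach, derives the pointwise inequalities $f_{+}'(\overline{y})p\le\xi\le f_{-}'(\overline{y})p$ from nonnegative test functions, and only \emph{then} defines $\zeta:=\xi/p$ on $\{p\ne 0\}$ (and $\zeta:=f_{-}'(\overline{y})$ on $\{p=0\}$). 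With this order, $\zeta\in\partial f(\cdot,\overline{y})$, the adjoint equation $-\Delta p+\zeta p=\overline{y}-g$, the regularity $p\in Y$, and the sign condition all come out simultaneously. You should replace your ``choose $\zeta$, solve for $p$'' plan in case (i) by this explicit-adjoint/Hahn--Banach construction; the localization via Lemma \ref{psi_n} that you invoke is then not needed for this proposition (it belongs to the proof of Proposition \ref{B_C_implies_strong}).
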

\begin{proof}
We first assume that $\overline{u}$ satisfies \eqref{cq} and it is also $B$-stationary. We set $p:=-\alpha \overline{u}\in L^{2}(\om)$ as a candidate for the adjoint state and it clearly follows that \eqref{weak_station_2} is satisfied. Let $z\in Y$ such that there exists $h\in T_{\mathcal{C}_{ad}}(\overline{u})$ with $z=S'(\overline{u};h)$. Testing the equation that $z$ satisfies with $p$ and using the $B$-stationarity condition yields
\[\langle -\Delta z, p \rangle + \langle F'(\overline{y};z),p \rangle= \langle p, h \rangle=-\alpha \langle \overline{u}, h \rangle \le \langle \overline{y}-g, z\rangle\;\; \Rightarrow \;\; \langle -\Delta z, p \rangle - \langle \overline{y}-g, z\rangle\le - \langle F'(\overline{y};z),p \rangle.\]
We can now interpret $\xi:=\Delta p+ (\overline{y}-g)\in Y^{\ast}$ and estimate
\begin{align}\label{estimate_xi}
\langle \xi, -z \rangle_{Y}
= - \langle \Delta z, p  \rangle - \langle \overline{y}-g, z \rangle
\le - \langle F'(\overline{y};z),p \rangle 
\le \|F'(\overline{y};z)\|_{L^{2}(\om)} \|p\|_{L^{2}(\om)}\le c \|z\|_{L^{2}(\om)} \|p\|_{L^{2}(\om)}
\end{align}
where we used that positive homogeneity of the directional derivative and the fact that $F'(\overline{y};\pm 1)\in L^{\infty}(\om)$. Note that \eqref{estimate_xi} is valid for all $z\in \{S'(\overline{u};h)\in Y:\; h\in T_{\mathcal{C}_{ad}}(\overline{u})\}$. We claim that the latter set is dense in $Y$. Indeed this follows from the fact that the operator $ h\mapsto S'(\overline{u};h)$ derived from equation \eqref{adjoint} is surjective and Lipschitz continuous $L^{p}(\om)\to Y$ from the density of $T_{\mathcal{C}_{ad}}(\overline{u})$ in $L^{2}(\om)$. Using this density we deduce  from \eqref{estimate_xi}
\[\langle \xi, z \rangle_{Y} \le \tilde{c} \|z\|_{L^{2}(\om)},\quad \text{for all }z\in Y.\]
From the Hahn-Banach theorem we can extend $\xi$ from $Y^{\ast}$ to $L^{2}(\om)^{\ast}\simeq L^{2}(\om)$. Denoting its Riesz representation by $\xi $ again we have
\[(\xi,z)_{L^{2}(\om)} = \langle \xi, z \rangle\le -\langle F'(\overline{y};-z), p \rangle, \quad \text{for all }z\in Y.\]
By density of $Y$ in $L^{2}(\om)$ and continuity of $F'(\overline{y};\cdot)$ on $L^{2}(\om)$, we can extend the previous estimate to 
\[\langle \xi, z \rangle\le -\langle F'(\overline{y};-z), p \rangle, \quad \text{for all }z\in L^{2}(\om).\]
For arbitrary $z\in C_{c}^{\infty}(\om)$ with $z\ge0$ we have
\begin{align*}
0&\le \int_{\om} - F'(\overline{y};-z)p-\xi z\, dx = \int_{\om} (f_{-}'(\overline{y})p-\xi)z\, dx,\\
0 &\le \int_{\om} -\xi(-z) - F'(\overline{y};z)p=  \int_{\om} (\xi -f_{+}'(\overline{y})p)z\, dx,
\end{align*}
and from the fundamental lemma of calculus of variations we get
\begin{equation}\label{almost_strong}
f_{+}'(\overline{y}(x))p(x)\le \xi(x) \le  f_{-}'(\overline{y}(x))p(x), \quad \text{ for almost every }x\in\om.
\end{equation}
By defining
\begin{equation}\label{def_zeta}
\zeta(x):=
\begin{cases}
\frac{\xi(x)}{p(x)}, & \text{almost everywhere in } \{p(x)\ne 0\},\\
f_{-}'(\overline{y}(x)), & \text{almost everywhere in } \{p(x)= 0\},
\end{cases}
\end{equation}
we have that $\zeta(x)\in [\underline{\partial}f (\overline{y}(x)), \overline{\partial}f (\overline{y}(x))]$ and thus $\zeta(x)\in \partial f (\overline{y}(x))$ almost everywhere in $\om$. Moreover $\zeta\in L^{\infty}(\om)$, $\zeta\ge 0$ and the sign condition  for $\zeta p$ holds almost everywhere in $\om$. Since $\xi=\zeta p\in L^{2}(\om)$, in view of $\xi=\Delta p + (\overline{y}-g)$ we have
\[-\Delta p + \zeta p=(\overline{y}-g),\]
implying $-\Delta p\in L^{2}(\om)$ and thus $p\in Y$. This concludes the strong stationarity of $\overline{u}$.

Now if we assume that $\overline{u}$ is $B$-stationary and $\om_{f}$ has zero Lebegue measure, then it can be shown, see \cite[Lemma 1.12]{PDEcon_Hinze}, that there exists $\mu=\mu_{b}-\mu_{a}$, with $\mu_{a}, \mu_{b}\in L^{2}(\om)$ such that 

\begin{equation}\label{eq:multi} 
\begin{aligned}
&\mathcal{J}'(\overline{u};h)+\langle \mu,h\rangle= \langle \overline{y}-g, S'(\overline{u};h) \rangle + \alpha\langle \overline{u},h\rangle +\langle \mu ,h\rangle= 0, \quad \text{ for all }  h\in L^{p}(\om),\\
&u_{a}\le \overline{u} \le u_{b}, \;\; \mu_{a}\ge 0, \;\;\mu_{b}\ge 0 \\
&\mu_{a}(u_{a}-\overline{u})=\mu_{b}(\overline{u}-u_{b})=0 
\end{aligned}
\end{equation}
almost everywhere in $\om$. In that case we can  set $p:=-\alpha\overline{u} + \mu \in L^{2}(\om)$ and we proceed as before by using the equality $\{S'(\overline{u};h)\in Y:\; h\in L^{p}(\om)\}=Y$. 

\end{proof}

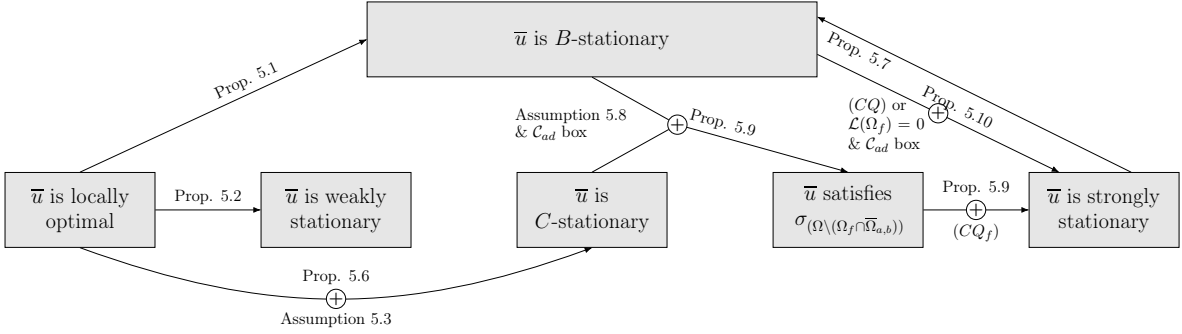
\begin{figure}[h!]
\begin{minipage}{\textwidth}
\centering
\resizebox{0.95\textwidth}{!}{
\begin{tikzpicture}[
node distance= 9mm and 0mm,
rectlong/.style = {draw=black, fill=gray!20, font=\Large,
               minimum width=300pt, minimum height = 50pt},
rect/.style = {draw=black, fill=gray!20, font=\Large,
               minimum width=100pt, minimum height = 50pt},               
every edge/.style = {draw, -Latex},
every edge quotes/.style = {sloped, auto=right}
                ]

\node[rectlong] at (0, 0)  (bb) {$\overline{u}$ is $B$-stationary};
\node[rect, text width=2.5cm, align=center] at (-12, -4)  (a) {$\overline{u}$ is locally optimal};
\node[rect, text width=2.5cm, align=center] at (-6, -4)  (b) {$\overline{u}$ is weakly stationary};
\node[rect, text width=2.8cm, align=center] at (0, -4)  (c) {$\overline{u}$ is \\ $C$-stationary};
\node[rect, text width=2.5cm, align=center] at (6, -4)  (d) {$\overline{u}$ satisfies  $\sigma_{(\Omega\setminus (\Omega_{f}\cap \overline{\Omega}_{a,b}))}$};
\node[rect, text width=2.8cm, align=center] at (12, -4)  (e) {$\overline{u}$ is strongly stationary};
\node[circle, draw=black, fill=white, inner sep=0pt, minimum size=2pt] at (2,-2) (cc) {$\boldsymbol{+}$} ;
\node[text width=3.2cm] at (-0.2,-1.8) {Assumption 5.8}; 
\node[text width=3.2cm] at (-0.2,-2.2) {\& $\mathcal{C}_{ad}$ box}; 
\path   (a.north) edge[pos=0.6, "Prop. 5.1"  swap] (bb.west)
        (a.east) edge["Prop. 5.2" swap] (b.west)
        ;
\path[->]          (a.south)  edge   [bend right=20]   node[above=0.2] {Prop. 5.6} node[below=0.2]{Assumption 5.3} node[circle, draw=black, fill=white, inner sep=0pt, minimum size=2pt] {$\boldsymbol{+}$} (c.south);       
\draw (bb.south)--(cc);
\draw (c.north)--(cc);
\path   (cc) edge[pos=0.2, "Prop. 5.9"  swap ] (d.north);
\path[->]          (d.east)  edge   [bend right=0]   node[above=0.2] {Prop. 5.9} node[below=0.2]{($CQ_{f}$)} node[circle, draw=black, fill=white, inner sep=0pt, minimum size=2pt] {$\boldsymbol{+}$} (e.west);  
\path ([xshift=18pt]e.north) edge[pos=0.85, "Prop. 5.7" ] ([yshift=15pt]bb.east); 
\path[->]          ([yshift=-10pt]bb.east)  edge   [bend right=0,  "Prop. 5.10"{xshift=10pt, swap}]   node[circle, draw=black, fill=white, inner sep=0pt, minimum size=2pt] {$\boldsymbol{+}$} ([xshift=-30pt]e.north);           
\node[text width=2cm] at (7.0,-2) {($CQ$) or $\mathcal{L}(\Omega_{f})=0$ \& $\mathcal{C}_{ad}$ box}; 
\end{tikzpicture}
}
 \end{minipage}
 \caption{Summary of relations between the different stationarity conditions. Arrows represent direct implications which only require the assumptions they start from. Additional assumptions are marked with the ``$\boldsymbol{+}$'' symbol.}
 \label{fig:diagram}
\end{figure}

The different stationarity conditions and their relations are summarized in Figure \ref{fig:diagram}.

\section{Conclusion}\label{sec:conclusion}

In this paper, we studied the optimal control of a family of nonsmooth neural network informed PDEs which approximate some semilinear PDE with unknown nonlinearity. We focused on theoretical investigations, such as well-posedness of the learning-informed PDEs and the optimal control problem, approximation properties, and established several first-order conditions based on purely primal concepts (B-stationarity) and primal-dual systems (e.g.\ C-stationarity and strong stationarity). Particular attention was paid to ReLU networks due to its wide application in practical learning. However, many of the  results presented here fit to optimal control of more general nonsmooth semilinear PDEs.
Different optimality/stationarity conditions require specific assumptions, and their relationships are clarified in Figure \ref{fig:diagram}. We showed the equivalence between strong stationarity and B-stationarity,  under some rather restrictive constraint qualifications. Whether this equivalence persists under milder constraint qualifications remains an open question.

The understanding of optimality conditions for the nonsmooth PDE constrained optimization problem is not only an important theoretical question, but also guides the development of robust nonsmooth numerical algorithms, which is  the focus of the companion paper \cite{DonHinPap22b}.

\subsection*{Acknowledgment}

This work is supported by the Deutsche Forschungsgemeinschaft (DFG, German Research Foundation) under Germany's Excellence Strategy -- The Berlin Mathematics Research Center MATH+ (EXC-2046/1, project ID: 390685689).
The work of GD is also supported by an NSFC grant, No. 12001194.
The work of MH is partially supported by the DFG SPP 1962, project-145r.
The work of KV was supported by the WIAS Female Master Students Program.

\appendix

\section{}\label{sec:app.proof}
\begin{proof}[Proof of Lemma \ref{PC1_lemma}]
If $\phi$ is differentiable $\overline{y}$, then $\underline{\partial} \phi (\overline{y})=\overline{\partial} \phi (\overline{y})=\phi'(\overline{y})$. Since $\phi\in PC^{1}(\mathbb{R})$, $\phi'$ is continuous at a compact interval containing $\overline{y}$. Hence $\phi_{n}'=\rho_{n}\ast \phi' \to \phi'$ uniformly in that interval and thus we have $\lim_{n\to \infty} \phi_{n}'(y_{n})=\phi'(\overline{y})$. 

Suppose now that $\overline{y}$ is a nondifferentiable point of $\phi$. Since  $\phi\in PC^{1}(\mathbb{R})$, there exists an $M>0$ such that $\phi'\in C([\overline{y}-M, \overline{y}])$ and $\phi'\in C([\overline{y}, \overline{y}+M])$. From uniform continuity of $\phi'$ on those intervals we have that for every $\epsilon>0$, there exists a $<\delta\le M$ such that 
\begin{equation}\label{unif_conti}
0<\overline{y}-y<\delta \Rightarrow |\phi'(y)-\phi_{-}'(\overline{y})|<\epsilon \quad \text{ and }\quad
0<y-\overline{y}<\delta \Rightarrow |\phi'(y)-\phi_{+}'(\overline{y})|<\epsilon.
\end{equation}
Since $y_{n} \to \overline{y}$ and $\epsilon_{n}\searrow 0$, it follows that for all $\epsilon>0$ we can pick $N\in \mathbb{N}$ such that for every $n>N$ we have that for all $z\in (-\epsilon_{n}, \epsilon_{n})$
\begin{equation}\label{unif_conti2}
y_{n}-z<\overline{y} \Rightarrow |\phi'(y_{n}-z)-\phi_{-}'(\overline{y})|<\epsilon \quad \text{ and }\quad
y_{n}-z>\overline{y} \Rightarrow |\phi'(y_{n}-z)-\phi_{+}'(\overline{y})|<\epsilon.
\end{equation}
Note also that for every $n$, the set $\{z\in (-\epsilon_{n}, \epsilon_{n}):\, y_{n}-z=\overline{y}\}$ contains at most one point and hence it is a Lebesgue nullset.  In order to prove that $\limsup_{n\to\infty} \phi_{n}'(y_{n}) \le \overline{\partial} \phi(\overline{y})$, we show that for every $\epsilon>0$ we can find $N$ large enough such that for all $n>N$, we have $\phi_{n}'(y_{n})<\overline{\partial} \phi (\overline{y})+\epsilon$. Choosing $N$ as in \eqref{unif_conti2} and using $-\overline{\partial}\phi (\overline{y})\le -\phi'_{\pm}(\overline{y})$ we have

\begin{align*}
&\phi_{n}'(y_{n})-\overline{\partial} \phi (\overline{y})=\\
&=\int_{[-\epsilon_{n}, \epsilon_{n}]}\rho_{n}(z) \left ( \phi'(y_{n}-z) - \overline{\partial} \phi (\overline{y})  \right ) \mathbbm{1}_{\{y_{n}-z<\overline{y}\}}(z) + \rho_{n}(z) \left ( \phi'(y_{n}-z) - \overline{\partial} \phi (\overline{y})  \right ) \mathbbm{1}_{\{y_{n}-z>\overline{y}\}}(z)\,dz\\
&\le \int_{[-\epsilon_{n}, \epsilon_{n}]}\rho_{n}(z) \left ( \phi'(y_{n}-z) - \phi_{-}'(\overline{y})  \right ) \mathbbm{1}_{\{y_{n}-z<\overline{y}\}}(z)
+ \rho_{n}(z) \left ( \phi'(y_{n}-z) - \phi_{+}'(\overline{y})  \right ) \mathbbm{1}_{\{y_{n}-z>\overline{y}\}}(z)\,dz\\
&<\epsilon \left ( \int_{[-\epsilon_{n}, \epsilon_{n}]}\rho_{n}(z)\mathbbm{1}_{\{y_{n}-z<\overline{y}\}}(z)\,dz
\int_{[-\epsilon_{n}, \epsilon_{n}]}\rho_{n}(z)\mathbbm{1}_{\{y_{n}-z>\overline{y}\}}(z)\,dz   \right )=\epsilon.
\end{align*}
The inequality $\liminf_{n\to\infty} \phi_{n}'(y_{n}) \ge \underline{\partial} \phi(\overline{y})$ is proven analogously.

\end{proof}

The following lemma and its proof are slight adaptations from \cite[Lemma 3.7]{Betz} which we include here for completeness.
\begin{lemma}\label{psi_n}
For an arbitrary $\mathcal{A}\subset \om$ there exists a sequence $(\psi_{m})_{m\in\NN}\subset C^{\infty}(\RR^{d})$ with $\psi_{m}\ge 0$ such that for every $m\in \NN$ and every $v\in  C^{\infty}(\RR^{d})$
\begin{equation}\label{v_psi_n}
-\Delta(v\psi_{m})(x) + F'(\overline{y};v\psi_{m})(x)=0,\quad \text{ for almost every } x\in \mathcal{A}, 
\end{equation}
and 
\begin{equation}\label{char_conv}
\mathbbm{1}_{\mathrm{supp}\psi_{m}} (x) \to \mathbbm{1}_{\om\setminus \overline{\mathcal{A}}} (x),\quad \text{ for every }x\in\om.
\end{equation}
\end{lemma}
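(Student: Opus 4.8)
The plan is to construct $\psi_m$ so that it vanishes identically on an open neighborhood of $\overline{\mathcal{A}}$ while being strictly positive on an exhausting family of open sets that fills up $\om\setminus\overline{\mathcal{A}}$; with this in hand, both requirements follow almost immediately and individually term by term.

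First I would introduce the distance function $d(x):=\mathrm{dist}(x,\overline{\mathcal{A}})$, which is continuous on $\RR^d$, and set $V_m:=\{x\in\RR^d: d(x)>1/m\}$. These sets are open, increase with $m$, and satisfy $\bigcup_m V_m=\RR^d\setminus\overline{\mathcal{A}}$, since a point lies outside the closed set $\overline{\mathcal{A}}$ exactly when its distance to it is positive. Next, invoking the standard fact that for every open set there exists a smooth nonnegative function which is strictly positive precisely on that set and vanishes on its complement (obtained, e.g., by summing suitably scaled bump functions over a countable ball cover of $V_m$), I would pick $\psi_m\in C^\infty(\RR^d)$, $\psi_m\ge 0$, with $\{\psi_m>0\}=V_m$. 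Consequently $\psi_m\equiv 0$ on the open set $W_m:=\{d<1/m\}\supseteq\overline{\mathcal{A}}$, and $\mathrm{supp}\,\psi_m=\overline{V_m}\subseteq\{d\ge 1/m\}$.

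To verify \eqref{v_psi_n}, I would use that $\psi_m$, and hence $v\psi_m$ for any smooth $v$, vanishes identically on the open set $W_m$ that contains $\mathcal{A}$. A smooth function identically zero on an open set has all its derivatives zero there, so $\Delta(v\psi_m)(x)=0$ for every $x\in W_m$, in particular for every $x\in\mathcal{A}$. Moreover $v\psi_m(x)=0$ on $\mathcal{A}$ forces $F'(\overline{y};v\psi_m)(x)=f_x'(\overline{y}(x);0)=0$ for almost every $x\in\mathcal{A}$, since the directional derivative in the zero direction vanishes. Adding the two contributions yields \eqref{v_psi_n}.

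For the support convergence \eqref{char_conv}, I would argue pointwise on $\om$. If $x\in\om\setminus\overline{\mathcal{A}}$ then $d(x)>0$, so $x\in V_m\subseteq\mathrm{supp}\,\psi_m$ for all $m>1/d(x)$, giving $\mathbbm{1}_{\mathrm{supp}\,\psi_m}(x)=1$ eventually; while if $x\in\om\cap\overline{\mathcal{A}}$ then $d(x)=0$, hence $x\notin\{d\ge 1/m\}\supseteq\mathrm{supp}\,\psi_m$ for every $m$, giving $\mathbbm{1}_{\mathrm{supp}\,\psi_m}(x)=0$. Both values coincide with $\mathbbm{1}_{\om\setminus\overline{\mathcal{A}}}(x)$, so the convergence holds. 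The construction is essentially routine; the only points requiring care are the existence of a smooth function positive on exactly a prescribed open set and the bookkeeping separating $\mathcal{A}$ from $\overline{\mathcal{A}}$ in the two inclusions, which is precisely why the distance-function exhaustion is convenient.
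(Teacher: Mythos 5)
Your proof is correct and follows essentially the same route as the paper's: a Whitney-type smooth function that is strictly positive exactly on open sets exhausting $\om\setminus\overline{\mathcal{A}}$ and vanishes on an open neighborhood of $\mathcal{A}$, so that \eqref{v_psi_n} holds trivially and \eqref{char_conv} follows from the monotone exhaustion. The only (immaterial) differences are that you parametrize the exhaustion by super-level sets of $\mathrm{dist}(\cdot,\overline{\mathcal{A}})$ and verify \eqref{char_conv} pointwise via the sandwich $V_m\subseteq\mathrm{supp}\,\psi_m\subseteq\{d\ge 1/m\}$ instead of identifying $\mathrm{supp}\,\psi_m$ exactly, and your $\psi_m$ need not vanish outside $\overline{\om}$, which the lemma as stated does not require.
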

\begin{proof}
For $m\in\NN$, we set $\mathcal{A}_{m}:= \{x\in \mathbb{R}^{d}:\; \mathrm{dist}(\mathcal{A}, x)<\frac{1}{m}\}$, which are open sets, satisfying $\mathcal{A}\subset \mathcal{A}_{m+1}\subset \mathcal{A}_{m}$ for all $m$
and also $\bigcap_{m\in \NN} \mathcal{A}_{m}=\overline{\mathcal{A}}$. According to a classical result by Whitney,  see for instance \cite[Theorem 2.1]{variable_mollifiers}, for every $m\in \NN$ we can find $\psi_{m}\in C^{\infty}(\RR^{d})$ with $\psi_{m}>0$ in $\om\setminus \overline{\mathcal{A}_{m}}$ and $\psi_{m}=0$ in $\RR^{d} \setminus (\om\setminus \overline{\mathcal{A}_{m}})$. In particular, $\psi_{m}=0$ on the open set $\om\cap \mathcal{A}_{m}$ and \eqref{v_psi_n} is satisfied for almost every $x\in \om\cap \mathcal{A}_{m}$ and hence for almost every $x\in \mathcal{A}$. In order to show \eqref{char_conv}, we note that $\mathrm{supp} \psi_{m}= \overline{(\om\setminus \overline{\mathcal{A}_{m}})}=\overline{\om}\setminus \mathcal{A}_{m}$ and $\mathrm{supp} \psi_{m} \subset \mathrm{supp} \psi_{m+1}$ for all $m\in\NN$. Thus we have
\[\bigcup_{m\in\NN} \mathrm{supp}\psi_{m} = \bigcup_{m\in\NN} (\overline{\om}\setminus \mathcal{A}_{m})= \overline{\om} \setminus \left (\bigcap_{m\in\NN} \mathcal{A}_{m}\right )= \overline{\om} \setminus \overline{\mathcal{A}},\]
which implies $\mathbbm{1}_{\mathrm{supp}\psi_{m}}  \to \mathbbm{1}_{\overline{\om}\setminus \overline{\mathcal{A}}} $ pointwise in $\RR^{d}$, and hence \eqref{char_conv} follows.
\end{proof}

\bibliographystyle{plain}
\bibliography{kostasbib}
\end{document}